\def\a{\alpha}
\def\b{\beta}
\def\d{\delta}
\def\e{\varepsilon}
\newcommand{\cH}{{\mathcal H}}
\newcommand{\Z}{{\mathbb Z}}
\newcommand{\R}{{\mathbb R}}
\newcommand{\T}{\mathbb{T}}
\newcommand{\N}{\mathbb{N}}
\newcommand{\ov}{\overline}
\newcommand{\sP}{{\mathscr P}}
\newcommand{\sQ}{{\mathscr Q}}
\newcommand{\ds}{\displaystyle}
\newcommand{\diver}{\nabla\cdot}
\newcommand{\dd}{\hspace{0.7pt}{ d}}
\newcommand{\Tan}{{\rm Tan}}
\newcommand{\be}{\begin{equation}}
\newcommand{\ee}{\end{equation}}
\newcommand{\ba}{\begin{array}}
\newcommand{\ea}{\end{array}}
\newtheorem{remark}{\textbf{Remark}}[section]
\newtheorem{theorem}{\textbf{Theorem}}[section]
\newtheorem{lemma}[theorem]{\textbf{Lemma}}
\newtheorem{corollary}[theorem]{\textbf{Corollary}}
\newtheorem{definition}[remark]{\textbf{Definition}}
\providecommand{\customgenericname}{}
\newcommand{\newcustomtheorem}[2]{%
  \newenvironment{#1}[1]
  {%
   \renewcommand\customgenericname{#2}%
   \renewcommand\theinnercustomgeneric{##1}%
   \innercustomgeneric
  }
  {\endinnercustomgeneric}
}
\numberwithin{equation}{section}
\newcounter{hyp}
\title[]{Well-posedness of mean field games master equations involving non-separable local Hamiltonians}  
\author[D.M. Ambrose]{David M. Ambrose} 
\address{Department of Mathematics, Drexel University, Philadelphia, PA, USA}
\email{dma68@drexel.edu}
\author[A.R. M\'esz\'aros]{Alp\'ar R. M\'esz\'aros}  
\address{Department of Mathematical Sciences, University of Durham, Durham DH1 3LE, England}
\email{alpar.r.meszaros@durham.ac.uk} 
\date{\today}
\begin{document}

\begin{abstract}
In this paper we construct short time classical solutions to a class of master equations in the presence of non-degenerate individual noise arising in the theory of mean field games. The considered Hamiltonians are non-separable and {\it local} functions of the measure variable, therefore the equation is restricted to absolutely continuous measures whose densities lie in suitable Sobolev spaces. Our results hold for smooth enough Hamiltonians, without any additional structural conditions as convexity or monotonicity. 
\end{abstract}

\maketitle

\section{Introduction}

The theory of Mean Field Games was initiated around the same time by J.-M. Lasry and P.-L. Lions on the one hand (\cite{LasLio-1,LasLio-2,LasLio-3}) and by P. Caines, M. Huang and R. Malham\'e (\cite{HuaMalCai}) on the other hand. The main motivation of both groups was to characterize limits of Nash equilibria of stochastic (or deterministic) differential games, when the number of agents tends to infinity. 

A central object in this theory is the so-called {\it master equation} introduced by P.-L. Lions in his lectures at Coll\`ege de France (cf. \cite{Lio}). This is a nonlocal Hamilton-Jacobi equation set on the space of Borel probability measures, which encodes all the information about the game. One of the main features of the master equation is that it serves as an important tool to show the convergence/mean field limit of Nash equilibria of games with finite number of agents as the number of agents increases to infinity (cf. \cite{CarDelLasLio, DelLacRam:19, DelLacRam:20}). In particular, solutions to the master equation can be used to obtain fine quantitative estimates on the rate of convergence and  these solutions typically provide $\e$-Nash equilibria for games with large but finite number of agents. In the same time, this equation contains all the information about the finite dimensional mean field game system and it also describes a precise quantitative stability of this system with respect to the initial distribution of the agents.

The concept of master equations has a long history in kinetic theory and mean field limits of particle systems (see for instance \cite{MisMou} and the references therein). The past couple of years have witnessed a great increase of literature on master equations arising in the theory of mean field games. Depending on the techniques used in these works to show the well-posedness of the corresponding master equations, one may group these results into three possible categories. We refer to a non-exhaustive list of works as follows:  probabilistic ideas for problems including individual or common noise were used in \cite{ChaCriDel, CarDel-II, GanMesMouZha, MouZha}; variational techniques (based on optimal transport or optimal control theory in Hilbert spaces, for problems without noise or with individual noise) were exploited in \cite{GanSwi:15, May, GanMes, BenGraYam}; and finally PDE techniques were used in \cite{CarDelLasLio, CarCirPor} to attack problems with common or individual noise. In most of these references, a special hypothesis is assumed on the Hamiltonian $\cH$ appearing in the master equation, namely it is such that the momentum variable it is separated from the measure variable, i.e. it has the typical form of $\cH(t,x,p,m):=H(t,x,p)+f(t,x,m)$. Moreover, in all the cases previously considered in the literature, the dependence of the Hamiltonian on the measure variable is {\it always} assumed to be (nonlocal) regularizing. To the best of our knowledge, \cite{CarCirPor, CarDel-II} are the only works, where the authors show the short time well-posedness of master equations involving non-separable regularizing Hamiltonians under mild assumptions (these are supposed to be smooth enough with some additional growth condition of polynomial type in the momentum variable). The recent work \cite{GanMesMouZha} constructs global in time classical solutions to master equations involving non-separable Hamiltonians, under an additional, so-called displacement monotonicity assumption on the Hamiltonian. Finally, \cite{May} constructs local in time classical solutions to the master equation in the deterministic setting for smooth regularizing Hamiltonians that have the separable structure.

\medskip

In this paper, we show the existence and uniqueness of classical solutions of a class of so-called {\it first order} master equations (i.e. driven by non-degenerate individual noise, and so derivatives with respect to the measure variable appear only at first order) with non-separable Hamiltonians that depend {\it locally} on the measure variable. Because of this local dependence, we clearly need to restrict the domain of the master function to the set of absolutely continuous probability measures whose densities lie in a suitable Sobolev space $H^s(\T^d)$ (for some $s>1$). Ours seems to be the first result on the well-posedness of master equations where the Hamiltonian is a local function of the measure variable. The choice of the physical space $\T^d$ is for convenience, to avoid non-compactness issues. Nevertheless, we expect our results to hold true, without major complications, in the setting of $\R^d$ as well (under suitable moment bounds on the measures).

It is a very natural and interesting question whether can one extend the short-time results to global in time ones, in presence of (the generalized version of) the Lasry-Lions monotonicity condition on the data (see for instance condition (1.6) in \cite{AchPor} imposed on non-separable Hamiltonians). This will be studied in a future work.

\medskip

The equation in the center of our focus for $U:[0,T]\times\T^d\times\sP(\T^d)\cap H^s(\T^d)\to\R$ reads as
\begin{equation}\label{eq:master_main}
\left\{
\begin{array}{l}
\ds-\partial_t U(t,x,m)-\Delta U(t,x,m) +\cH(t,x,\nabla U(t,x,m),m)-\ds\int_{\T^d}\nabla_y\cdot(\nabla_w U(t,x,m,y))\dd m(y)\\[7pt]
\ds+\int_{\T^d}\nabla_w U(t,x,m,y)\cdot D_p\cH(t,y,\nabla U(t,y,m))\dd m(y)=0, \quad (t,x,m)\in(0,T)\times\T^d\times\sP(\T^d)\cap H^s(\T^d),\\[7pt]
U(T,x,m)=G(x,m), \quad\quad (x,m)\in\times\T^d\times\sP(\T^d)\cap H^s(\T^d).
\end{array}
\right.
\end{equation} 
The precise assumptions on the data $\cH:[0,T]\times\T^d\times\R^d\times[0,+\infty)\to\R$ and $G:\T^d\times\sP(\T^d)\to\R$ will be given in the next section. In the previous equation the special notation $\nabla_w U$ stands for the Wasserstein gradient of $U$ with respect to the measure variable (see for instance in \cite{AmbGigSav,GanTud, CarDelLasLio}). All other notations for derivatives are understood with respect to the time and spacial variables. 

%Let us underline that because of the non-separable nature of the model, in our setting it does not seems possible to obtain the solution to the master equation via an approximation procedure (by approximating the dependence on the measure variable with regularizing functionals).

\medskip

In our analysis we use PDE techniques in $H^s(\T^d)$ -- when we look at the finite dimensional mean field games system as characteristics of the master equation -- and similarly as in \cite{CarDelLasLio} and \cite{CarCirPor}, it is natural to work at the level of the linearized system to obtain the necessary regularity estimates on the master function. Our approach, at the technical level, fundamentally differs from the ones in \cite{CarDelLasLio} and \cite{CarCirPor} (where the regularization effect of the Hamiltonian in the measure variable was indispensable): at the level of this linearized system, in lack of regularization effect of the Hamiltonian in the measure variable, we perform a careful analysis using the energy method adapted to forward-backward problems. These techniques were used previously by the first author in \cite{Amb18:2}, to show the well-posedness of the underlying finite dimensional mean field games system. As a result of this, when showing the regularity of the master function with respect to the measure variable, it turns out to be very convenient to work in the metric space $(\sP(\T^d)\cap H^s(\T^d),\|\cdot\|_{H^{-1}})$. %We note that by \cite{Pey} the norm $\|\cdot\|_{H^{-1}}$ is asymptotically controllable by the $L^2$--Wasserstein metric $W_2$. 
In fact, by the Sobolev embedding theorem, all the regularity estimates on the master function in this metric space will imply the corresponding estimates also in $(\sP(\T^d)\cap H^s(\T^d),\|\cdot\|_{H^{s}})$.

Because of the comparison between the $H^{-1}$ and $W_2$ metrics (cf. \cite{Pey}), we expect our results to be useful in studying the convergence problem of games with finitely many players to the mean field limit, in the presence of non-separable, local Hamiltonians. A breakthrough on the convergence problem in the direction of Hamiltonians depending locally on the measure variable has been recently achieved in \cite{Car:17}. Here the proof of the convergence is carried out via the solution of the master equation in which the dependence of the Hamiltonian on the measure variable becomes more and more singular as the number of agents is increasing. Because of this, a careful analysis had to be combined with special structural and monotonicity assumptions on the Hamiltonian. In seems unclear whether such an argument could work directly in our setting, because of the difference in nature of the assumptions on the Hamiltonian. In a future work we aim to pursue a more `classical route' by first showing that the MFG system (for a class of Hamiltonians that depend locally on the measure variable) is well-posed in Sobolev spaces, for general measure initial data. Having such a result in hand (for which some initial investigation seems already promising), would hopefully translate to the level of the master equation and to the convergence problem. We remark again that some of the crucial estimates on the master function are carried out in Sobolev spaces of negative order, which give us hope to be able to extend the solution of the master equation to rougher probability measures.

Finally, let us remark that (finite dimensional) mean field games systems involving non-separable Hamiltonians that depend locally on the measure variable, while they appear very naturally in models coming from economics (cf. \cite{AchBueLasLioMol}), are poorly understood in the literature in general. Beside the works \cite{Amb18:2, Amb18}, models involving so-called congestion effects have been studied in \cite{AchPor, cirantEtAl, EvaFerGomNurVos, EvaGom, GomVos, GomSed}.
%However, it is unclear whether the analysis introduced in these latter papers could lead to a development of well-posedness theory for the associated master equations.

\medskip

The structure of the paper is as follows. In Section \ref{sec:prelim} we collect all our standing assumptions and some preliminary results from the literature. This section recalls the notions of derivatives of functions defined on measures that we use in the rest of the paper. In the same time, here we describe the roadmap of our analysis, with the precise steps that lead to the proof of our main theorem. Then in the upcoming Sections \ref{sec:step1}, \ref{sec:step2} and \ref{sec:step3} we provide all the arguments to fill the necessary details on the steps prescribed in Section \ref{sec:prelim}. We end the main text with Subsection \ref{subsec:regularity} where we collect the necessary regularity estimates on the master function and conclude with the existence and uniqueness of a solution to \eqref{eq:master_main}. Finally, in Appendix \ref{sec:appendix_estimate} we provide a technical stability result on the mean field game system, which is the consequence of the results in \cite{Amb18:2}.

\medskip

{\noindent \sc Acknowledgements}

The authors wish to thank W. Gangbo his constant interest in this work and for his valuable feedback and comments that he gave at various stages during the preparation of the manuscript. This project was conceived at IPAM, UCLA and a part of it was done while both authors were members of the long program ``High Dimensional Hamilton-Jacobi PDEs'' in 2020. D.M.A. has been partially supported by the NSF grant DMS-1907684,  A.R.M. has been partially supported by the Air Force grant FA9550-18-1-0502 and by the King Abdullah University of Science and Technology Research Funding (KRF) under Award No. ORA-2021-CRG10-4674.2.

\medskip

\section{Preliminaries, main assumptions and our main theorems}\label{sec:prelim}

Let $\T^d:=\R^d/(2\pi \Z^d)$ stand for the flat torus embedded in $\R^d$. We denote by $\sP(\T^d)$ the space of Borel probability measures on the flat torus $\T^d$, for $s>1$ we set $\sQ:=\sP(\T^d)\cap H^s(\T^d)$. For $R>0$, we denote by $\sQ_R$ the elements of $m\in\sQ$ such that $\left\|m-\ov m\right\|_{H^s}\le R,$ where $\ov m:=\frac{1}{(2\pi)^d}$. These stand for $R$-balls in $H^s(\T^d)$ centered at the uniform density on $\T^d$. Let $T>0$.

Let $\cH:[0,T]\times\T^d\times\R^d\times[0,+\infty)\to\R$ be a given Hamiltonian function and let $G:\T^d\times\sQ\to\R$ be a given final cost. We assume the following hypotheses on the data.

\medskip

{\bf Standing assumptions}

\medskip

\begin{equation}\label{hyp:s}
s>\max\left\{\left\lceil(d+5)/2\right\rceil +1; 4\lceil d/2\rceil{+1}\right\}\ \ {\rm{is\ a\ given\ real\ number}}.\tag{H\arabic{hyp}}
\end{equation}
\stepcounter{hyp}

Since our results are built upon the ones in \cite{Amb18:2}, first we recall all the assumptions present in \cite{Amb18:2}. First, let $\beta\in(\N\cup\{0\})^{2d+1}$ be a multi-index, associated to the arguments $(t,x,p,q)$ of the Hamiltonian $\cH$. The first $d$ coordinates of $\b$ correspond to $(x_1,\dots,x_d)$, the following $d$ coordinates correspond to $(p_1,\dots, p_2)$ (which is the placeholder for $\nabla u$), while the last coordinate corresponds to $q$ (which is the placeholder for the $m$ variable).  Suppose that there exists $\tilde F:[0,+\infty)\to[0,+\infty)$ non-decreasing such that
\begin{align}\label{hyp:H1_original}
\left| \partial^\beta\cH(\cdot,\cdot,\nabla u,m) \right|_\infty\le \tilde F\left(|\nabla u|_{\infty}+|m|_\infty\right),\ \ \forall\ \beta\in\N^{2d+1},\ |\beta|\le s+2.
\tag{H\arabic{hyp}}
\end{align}
\stepcounter{hyp}
Suppose that for any $B\subset \R^{d+1}$ bounded set and $\b\in\N^{2d+1}$ with $0\le |\beta|\le 2$, $\exists c>0$ such that if $(p^1,q^1),(p^2,q^2)$, then
\begin{align}\label{hyp:H3_original}
|\partial^\beta\cH(t,x,p^1,q^1)-\partial^\beta\cH(t,x,p^2,q^2)|\le c\left(\sum_{i=1}^d|p^1_i-p^2_i|+|q^1-q^2|\right),\ \ \forall (t,x)\in [0,T]\times\T^d\tag{H\arabic{hyp}}.
\end{align}

%{\color{red}Question: in the assumption (H2) in David's previous work a smallness assumption is imposed also on $w_T$, the final condition for the HJB equation. This is replaced now with the general final condition $G$ which is supposed to be regularizing. Therefore, if I'm not mistaken, the smallness condition in this work should not involve $G$, however, it could still depend on $\kappa$ which is the constant in \eqref{payoffAssumption} below. David, could you please double check this?}

In Section \ref{sec:step3} we will need a precise estimate on $D^2\cH$ and $D^3\cH$. To describe this, let us define the following quantities.
\begin{equation}\nonumber
F_{1}=\mathcal{H}(t,x,\nabla\tilde{u},\tilde{m})-\mathcal{H}(t,x,\nabla u,m)\\
-D_{p}\mathcal{H}(t,x,\nabla u,m)\cdot\nabla(\tilde{u}-u)
-\partial_q\mathcal{H}(t,x,\nabla u,m)(\tilde{m}-m),
\end{equation}
\begin{equation}\nonumber
F_{2}=\mathrm{div}(mD_{p}\mathcal{H})-\mathrm{div}(\tilde{m}\widetilde{D_{p}\mathcal{H}})
-\mathrm{div}((\tilde m-m)D_{p}\mathcal{H})
-\mathrm{div}(m(D_{pp}^{2}\mathcal{H})\nabla(\tilde u-u))
-\mathrm{div}(m(D_p\partial_q\mathcal{H})(m-\tilde{m})),
\end{equation}
where we used the shorthand notations $\widetilde{D_{p}\mathcal{H}}=D_{p}\mathcal{H}(t,x,\nabla\tilde{u},\tilde{m})$ and $D_p\cH=D_{p}\mathcal{H}(t,x,\nabla u,m)$ in the last line.

Let us notice that under the assumption that $s$ satisfies \eqref{hyp:s}, we have that there exists $r>0$ such that 
\begin{equation}\label{con:r}
r>\left\lceil d/2\right\rceil\ \ {\rm{and}}\ \ s\ge 4r+1> \max\left\{\left\lceil(d+5)/2\right\rceil +1; 4\lceil d/2\rceil+1\right\}.%\tag{H\arabic{hyp}}
\end{equation}
%\stepcounter{hyp}

Let $r>0$ satisfy \eqref{con:r}. We assume that for any $R>0$ then there exists $c>0$ such that
\begin{equation}\label{f1Bound}
\|F_{1}\|_{H^{r}}^{2}\leq c(\|u-\tilde{u}\|_{H^{r+1}}^{4}+\|m-\tilde{m}\|_{H^{r}}^{4}),
\tag{H\arabic{hyp}}
\end{equation}
\stepcounter{hyp}
\begin{equation}\label{f2Bound}
\|F_{2}\|_{H^{r-1}}^{2}\leq c(\|u-\tilde{u}\|_{H^{r}}^{4}+\|m-\tilde{m}\|_{H^{r-1}}^{4}),
\tag{H\arabic{hyp}}
\end{equation}
for all $m,\tilde m, u,\tilde u$ such that $\|m\|_{H^r},\|\tilde m\|_{H^r},\|u\|_{H^{r+1}},\|\tilde u\|_{H^{r+1}}\le R.$
\stepcounter{hyp}

Here and afterwards we simply use the notation $D_p\cH$ and $\partial_q\cH$ to refer to the derivatives of $\cH$ with respect to the third (the placeholder for $\nabla u$) and fourth (the placeholder for $m$) variables, respectively. Let us underline that  $\partial_q\cH$ is used instead of $\partial_m\cH$ to avoid possible confusions with the Wasserstein derivative or $L^2$-G\^ateaux derivative with respect to the measure variable and to emphasize that this is a standard `local' derivative of the Hamiltonian function.

\begin{remark}
The Hamiltonians of the form
$$\cH(t,x,p,q)=a(t,x)P(p)Q(q),$$
where $a:[0,+\infty)\times\T^d$ is smooth with bounded derivatives and $P:\R^d\to \R, Q:\R\to\R$ are polynomial functions, satisfy our standing assumptions \eqref{hyp:H1_original} through \eqref{f2Bound}. Involving transcendental functions in the form
of $\cH$ is also allowable, as $\cH(t,x,p,q)=\sin(|p|^2)\ln(1+q^{2})$ or 
$\cH(t,x,p,q)=\exp\{\cos(|p|^2 q^3)\}$ also satisfy the assumptions.

Functions of finite regularity, such as fractional powers, are also admissible, but must be sufficiently regular.  For instance, if we
have $d=1$ then we find $s=6$ is acceptable in \eqref{hyp:s}.  Then in \eqref{hyp:H1_original}, we must have $\mathcal{H}$ 
differentiable with bounded derivatives at least 8 times; therefore, a Hamiltonian of the form $\cH(t,x,p,q)=|p|^{17/2}m^2$
would be admissible. 

The prior work \cite{Amb18:2} includes a result on Hamiltonians applicable to congestion problems; in such Hamiltonians,
a power of the measure, $m,$ appears in the denominator.  This naturally carries with it an assumption that the initial measure be 
bounded away from zero.  We expect that such Hamiltonians (e.g. $\cH(t,x,p,q)=|p|^{2}/q^{1/2}$) could be treated from the point
of view of the present results on the master equation, at the cost of this further restriction on the class of measures considered.  
We do not do so in the present work, but could pursue this direction in the future.

Finally, we mention that there may be Hamiltonians for which the arguments of \cite{Amb18:2} and the present work 
apply, even though the Hamiltonian may not satisfy exactly \eqref{hyp:H1_original} through \eqref{f2Bound}.  The 
assumptions are in place to work for a very general class of Hamiltonians, and it is likely that for some particular Hamiltonians, 
fewer derivatives may be required.
\end{remark}

\medskip

%\begin{equation}\label{hyp:H}
%{\color{red}\cH\ {\rm{satisfies}}\ \dots}
%\tag{H\arabic{hyp}}
%\end{equation}
%\stepcounter{hyp}

We consider the final cost function $G$ to be a nonlocal smoothing operator applied to a function of $m.$
This is what was done also in \cite{cirantEtAl} and \cite{CarCirPor}. The reason for the regularization is that the solutions $(u,m)$ to \eqref{eq:mfg} have the following regularity: $u(t,\cdot)\in H^{s}$ and $m(t,\cdot)\in H^{s-1}.$  If $u$ has data $G(m(T,\cdot))$ and $G$ is not regularizing, this is a problem
since $m(T,\cdot)$ is not in the space that $u$ should be in.  Thus we take $G$ to be smoothing.
This issue was %sidestepped by a simpler choice of terminal boundary conditions in \cite{Amb18:2}. However, 
discussed also in \cite{Amb18:2}, and \cite[Section 5.1]{Amb18:2} presents the corresponding well-posedness result on the MFG system in the case of regularizing final cost functions
(we note that the non-regularizing case is addressed there as well). We assume the following conditions on $G$. 

There exists $\Upsilon>0$ such that for all $m_1,m_2\in H^{s}(\T^d),$ 
\begin{equation}\label{hyp:G-Lip}
\|G(m_{1},\cdot)-G(m_{2},\cdot)\|_{H^1}^{2}\leq\Upsilon\|m_{1}-m_{2}\|_{L^2}^{2}.\tag{H\arabic{hyp}}
\end{equation}
\stepcounter{hyp}

We assume that there exists $\kappa>0$ such that for any $s$ satisfying \eqref{hyp:s}, we have
\begin{equation}\label{payoffAssumption}
\left\|\frac{\delta G}{\delta m}(\cdot,m)\mu\right\|_{H^{s}}^{2}\leq \kappa \|\mu\|_{H^{s-1}}^{2},\ \forall m\in \sQ_R.\tag{H\arabic{hyp}}
\end{equation}
\stepcounter{hyp}
It will be convenient for us to take $\kappa\geq1,$ so we make this assumption. Here, $\frac{\delta G}{\delta m}$ stands for the G\^ateaux derivative of $G$, that we define below. In the same time, by $\frac{\delta G}{\delta m}(\cdot,m)\mu$ we denote the action of $\frac{\delta G}{\delta m}(\cdot,m)$ on $\mu$.

Finally, we assume that $G$ is G\^ateaux differentiable on $H^{s}(\T^d)$ (for $s$ satisfying \eqref{hyp:s}) and for any $R>0$, there exists $c>0$ such that
\begin{equation}\label{hyp:G-C11}
\left\|G(\cdot,\tilde m)-G(\cdot, m)-\frac{\delta G}{\delta m}(\cdot,m)(\tilde m-m)\right\|_{H^s}\le c\|\tilde m-m\|^2_{H^s},\ \ \forall \tilde m,m\in \sQ_R.\tag{H\arabic{hyp}}
\end{equation}

\begin{remark}
The function 
$$
G(x,m):=[W*(g(m))](x),
$$
where $W:\T^d\to\R$ is smooth and $g:\R\to\R$ is bounded and of class $C^{1,1}$ with bounded derivative, satisfies the assumptions \eqref{hyp:G-Lip}-\eqref{hyp:G-C11}. We note that the assumption \eqref{hyp:G-Lip} could be modified to consider $m_{i}$ in a bounded set in $H^{s}(\T^d),$ and then the function $g$ would only need to be locally $C^{1,1}$.
\end{remark}

%\begin{equation}\label{hyp:G}
%{\color{red}G\ {\rm{satisfies}}\ \dots} 
%\tag{H\arabic{hyp}}
%\end{equation}
%\stepcounter{hyp}

\subsection{Notions of derivatives of functions defined on $\sP(\T^d)$}

Let $V:\sP(\T^d)\to\R$ and $m_0\in\sP(\T^d)$. We say that $V$ is $L^2$-differentiable in G\^ateaux sense at $m_0$ if there exists $\frac{\delta V}{\delta m}(m_0):\T^d\to\R$ continuous such that 
$$
\lim_{\e\to 0}\frac{V(m_0+\e\chi)-V(m_0)}{\e}=:\int_{\T^d}\frac{\delta V}{\delta m}(m_0)(y)\dd\chi(y)
$$
for all $\chi$ signed Borel measure such that $\chi(\T^d)=0$, independently on $\chi$. Notice that whenever $\frac{\delta V}{\delta m}(m_0)$ exists, it is unique up to additive constants. In what follows it will be convenient to fix such a constant that $\ds\int_{\T^d}\frac{\delta V}{\delta m}(m_0)(y)\dd m_0(y)=0$. For notational convenience, we use the notation $\ds\frac{\delta V}{\delta m}(m_0)\mu:=\int_{\T^d}\frac{\delta V}{\delta m}(m_0)(y)\dd\mu(y)$.

Given $m_0\in\sP(\T^d)$, we denote as $L^2(m_0)$ the subset of Borel fields $v: \T^d \rightarrow \R^d$ which are $m_0$--square integrable. The Wasserstein tangent space at $m_0$, denoted as $\Tan_{m_0}\sP(\T^d)$, is the closure of $\nabla C^\infty(\T^d)$ in $L^2(m_0).$ We say that $V:\sP(\T^d)\to\R$ is differentiable at $m_0$ if it is sub- and super differentiable in the Wasserstein sense at $m_0$  (see for instance \cite{AmbGigSav, GanTud, GanTud14}). In this case we denote by $\nabla_w V(m_0):\T^d\to\R^d$ its so-called {\it Wasserstein gradient}, which is the unique element in the intersection of $\Tan_{m_0}\sP(\T^d)$ and the sub- and super differentials. Notice that if $\frac{\delta V}{\delta m}(m_0)$ exists and it is in  $C^1(\T^d)$, we simply have that $\nabla_w V(m_0)(y)=\nabla_y\frac{\delta V}{\delta m}(m_0,y).$

\begin{definition}\label{def:gateaux_H-s}
Let $s>0$ and let $R>0$. We say that a function $V:\sQ_{R}\to\R$ is G\^ateaux differentiable on $H^s(\T^d)$ if for all $m\in \sQ_{R}$, $\frac{\d V(m)}{\d m}:\T^d\to\R$ exists and 
$$
\left| V(\tilde m) -V(m) - \int_{\T^d}\frac{\delta V}{\delta m}(m)(y) \dd(\tilde m-m)(y) \right|  = o\left(\|m_0-\tilde m_0\|_{H^{s}}\right),
$$
for all $\tilde m, m\in\sQ_{R}$. 
\end{definition}

\medskip

\begin{definition}\label{def:solution}
Let $s$ satisfy \eqref{hyp:s} and let $R>0$. We say that $U:[0,T]\times\T^d\times\sQ_R\to\R$ is a solution to the master equation \eqref{eq:master_main} if for all $m\in\sQ_{ R}$, $U(\cdot,\cdot,m)\in C^1([0,T]\times\T^d)\cap C([0,T]; C^{2,\alpha}(\T^d))$, $U(t,x,\cdot)$ is G\^ateaux differentiable on $H^s(\T^d)$ (in the sense of Definition \ref{def:gateaux_H-s}), with $\frac{\delta U}{\delta m} (t,x,m)(\cdot)\in H^{s}(\T^d)$, uniformly with respect to $(t,x)\in [0,T]\times\T^d$ %$\frac{\delta U}{\delta m}(\cdot, \cdot, m)(y)\in C([0,T];C^{2,\a}(\T^d)),$ 
and ${ \sQ_R}\ni m\mapsto\frac{\delta U}{\delta m}(t,\cdot,m)(y)\in H^{-s}(\T^d)$ is continuous, for all $(t,y)\in (0,T)\times\T^d$. Moreover, \eqref{eq:master_main} is satisfied pointwise for all $(t,x,m)\in (0,T)\times\T^d\times\sQ_R$.
\end{definition}

Our main theorem of this paper can be formulated as follows.

\begin{theorem}\label{thm:main} Let $s, \cH$ and $G$ satisfy \eqref{hyp:s}-\eqref{hyp:G-C11} and let $R>0$. Then, there exists $T_{***}>0$ (depending on the data $\cH$ and $G$ and $R$) such that the master equation \eqref{eq:master_main} has a unique solution on $[0,T_{***}]\times\T^d\times\sQ_R$ in the sense of Definition \ref{def:solution}.

\end{theorem}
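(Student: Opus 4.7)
The plan is to construct $U$ by the method of characteristics driven by the mean field game system. For each $(t_0,m_0)\in[0,T_{*}]\times\sQ_R$, let $(u,m)$ solve on $[t_0,T_{*}]$ the forward-backward MFG system
\[
\begin{cases}
-\partial_t u - \Delta u + \cH(t,x,\nabla u,m)=0,\\
\partial_t m - \Delta m - \diver\bigl(m\,D_p\cH(t,x,\nabla u,m)\bigr)=0,\\
m(t_0,\cdot)=m_0,\quad u(T_{*},\cdot)=G(\cdot,m(T_{*},\cdot)),
\end{cases}
\]
and set $U(t_0,x,m_0):=u(t_0,x)$. The short-time well-posedness of this system in $L^\infty_tH^{s+1}\times L^\infty_tH^s$, uniformly over $m_0\in\sQ_R$, is exactly what \cite{Amb18:2} delivers under our assumptions, provided $T_{*}$ is chosen small in terms of $R$ and the data. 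The flow-invariance identity $U(t,x,m(t,\cdot))=u(t,x)$ along each MFG trajectory is the mechanism by which \eqref{eq:master_main} will be recovered: differentiating it in $t$ and using the forward equation for $m$ formally yields the master equation, once $U$ is known to be sufficiently regular in the measure variable.

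The bulk of the work is to obtain this regularity. In Section \ref{sec:step1}, running the energy method of \cite{Amb18:2} on the difference of two MFG solutions with different initial measures, I would establish $H^{-1}$-Lipschitz dependence of $(u,m)$ on $m_0$. In Section \ref{sec:step2} I would linearize the MFG system in the direction of a mean-zero perturbation $\chi$ of $m_0$, obtaining a forward-backward linear system for $(v,\mu):=(\tfrac{\delta u}{\delta m_0}\chi,\tfrac{\delta m}{\delta m_0}\chi)$ with coefficients frozen along the reference solution and terminal condition $v(T_{*},\cdot)=\tfrac{\delta G}{\delta m}(\cdot,m(T_{*},\cdot))\mu(T_{*},\cdot)$. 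The same symmetrized energy technique yields bounds of the type $\|v\|_{L^\infty_tH^{r+1}}+\|\mu\|_{L^\infty_tH^r}\lesssim\|\chi\|_{H^{-1}}$; crucially, the forward-backward cancellation in the energy identity plays the role of the regularizing assumption on $\cH$ used in \cite{CarDelLasLio,CarCirPor}, which allows these estimates to close without any monotonicity.

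In Section \ref{sec:step3}, I would upgrade the linear bound to a genuine Gâteaux derivative by a second-order expansion. Comparing MFG solutions launched from $m_0$ and from $m_0+\chi$ (with $\chi$ small in $H^s$) to their linearization, the residual satisfies a linear forward-backward system whose source is precisely the Taylor remainder $(F_1,F_2)$ appearing in \eqref{f1Bound}--\eqref{f2Bound}. Those hypotheses deliver $\|F_1\|_{H^r}^2+\|F_2\|_{H^{r-1}}^2=O(\|\chi\|_{H^{-1}}^4)$ once the first-order bounds are fed in, and reapplying the energy method yields a quadratic control on the residual. This identifies $\frac{\delta U}{\delta m}(t_0,x,m_0)$ with $v(t_0,x)$ as a bounded linear functional of $\chi$, representable by a kernel in $y$ of $H^s(\T^d)$-regularity (by the margin $s\geq 4r+1$ coming from \eqref{con:r} together with Sobolev embedding), and verifies the differentiability criterion of Definition \ref{def:gateaux_H-s}.

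With these ingredients in hand, the final subsection extracts the spatial regularity $U(\cdot,\cdot,m)\in C^1_tC^{2,\alpha}_x$ from parabolic Schauder theory applied to the $u$-equation, and the continuity $m\mapsto\frac{\delta U}{\delta m}(t,\cdot,m)(y)\in H^{-s}$ from the Lipschitz estimates of Section \ref{sec:step1}. Differentiating $U(t,x,m(t,\cdot))=u(t,x)$ in $t$ along the MFG flow and integrating by parts in $y$ to convert $\frac{\delta U}{\delta m}$ into $\nabla_w U$ then verifies \eqref{eq:master_main} pointwise on $(0,T_{***})\times\T^d\times\sQ_R$ for some possibly further reduced $T_{***}\in(0,T_{*}]$. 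Uniqueness is standard: for any other solution $\tilde U$ in the sense of Definition \ref{def:solution}, a direct computation combining the master equation for $\tilde U$ with the Fokker-Planck equation for $m$ shows that $\tilde u(t,x):=\tilde U(t,x,m(t,\cdot))$ satisfies the HJB half of the MFG system with the same terminal condition as $u$; the uniqueness part of the MFG well-posedness from \cite{Amb18:2} then forces $\tilde U(t_0,\cdot,m_0)=u(t_0,\cdot)=U(t_0,\cdot,m_0)$. The principal obstacle throughout is concentrated in the forward-backward energy estimates of Sections \ref{sec:step1}--\ref{sec:step3}: the absence of any regularizing effect in the measure variable forces the analysis into the $H^{-1}$ metric and into a symmetrized forward-backward structure that only closes on short time intervals, and this is what ultimately produces the existence horizon $T_{***}$.
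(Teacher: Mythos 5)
Your overall architecture is exactly that of the paper: define $U$ via the MFG system, linearize to produce a candidate for $\frac{\delta U}{\delta m}$, obtain a kernel by Riesz representation, close the Taylor expansion to verify G\^ateaux differentiability, then derive \eqref{eq:master_main} by differentiating $U(t,x,m_t)=u(t,x)$ along the flow, and prove uniqueness by evaluating a second solution along an MFG trajectory. However, two of the quantitative claims as written are not just imprecise but actually false, and would break the argument if taken literally.

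First, the asserted linear bound
$\|v\|_{L^\infty_t H^{r+1}}+\|\mu\|_{L^\infty_t H^{r}}\lesssim\|\chi\|_{H^{-1}}$
cannot hold. The linearized system has coefficients of regularity $H^s\times H^{s-1}$ and is parabolic; it gains one derivative, not $r+2$, and in any case $\mu(0,\cdot)=\chi$ forces $\|\mu\|_{L^\infty_t H^r}=+\infty$ when $\chi$ is merely in $H^{-1}$. The estimates the paper actually proves (and that your argument needs) are the dual pair: $\|v\|_{L^\infty_t H^{s}}\lesssim\|\mu_0\|_{H^{s-1}}$ for regular data (Theorem \ref{step1Theorem}), and the negative-index estimate $\|v\|_{L^\infty_t H^{-s}}\lesssim\|\mu_0\|_{H^{-s-1}}$ (Theorem \ref{thm:v_H-s_bound}). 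It is the latter estimate that, via the Riesz argument of Lemma \ref{lem:Riesz}, yields a kernel $K(t,x,m_0,\cdot)\in H^s(\T^d)$; the inequality $s\geq 4r+1$ plays no role there. That inequality is used instead in Step 3, where \eqref{f1Bound}--\eqref{f2Bound} give $\|F_1\|_{H^r}^2+\|F_2\|_{H^{r-1}}^2\lesssim\|u-\tilde u\|_{H^{r+1}}^4+\|m-\tilde m\|_{H^r}^4$, and one converts the right-hand side to $\|m_0-\tilde m_0\|_{H^{-1}}$ through the $L^2$-stability estimate (Appendix \ref{sec:appendix_estimate}) together with the Sobolev interpolation \eqref{sobolevInterpolation}. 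This interpolation, not a direct $O(\|\chi\|_{H^{-1}}^4)$ bound on the forcing, is what closes the argument, and it produces the superlinear but non-quadratic exponent $\|z\|_{H^r}^2\lesssim\|m_0-\tilde m_0\|_{H^{-1}}^{5/2}$, which is all that is needed for G\^ateaux differentiability.

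Second, your uniqueness argument uses the wrong characteristic. You evaluate the putative second solution $\tilde U$ along the density $m$ coming from the MFG system associated to $U$, i.e. along the Fokker--Planck flow driven by $D_p\cH(t,x,\nabla_x U,m)$. But to obtain that $\tilde u(t,x):=\tilde U(t,x,m_t)$ solves the HJB equation, the nonlocal terms in the master equation for $\tilde U$ must cancel the transport term $\int\frac{\delta\tilde U}{\delta m}\,\partial_t m$, and this cancellation requires $m$ to satisfy the Fokker--Planck equation with drift $D_p\cH(t,x,\nabla_x \tilde U,m)$, not $D_p\cH(t,x,\nabla_x U,m)$. The correct route, as in the paper's proof, is to first solve the Fokker--Planck equation with drift built from $\tilde U$ itself (this is well-posed for short time by the regularity of $\tilde U$), then set $\tilde u(t,x):=\tilde U(t,x,m_t)$, verify that $(\tilde u,m)$ is an MFG solution, and invoke uniqueness of \eqref{eq:mfg}.
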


\subsection{The strategy of the proof of our main theorem}

In order to prove Theorem \ref{thm:main}, similarly as in \cite[Chapter 3]{CarDelLasLio}, we rely on the well-posedness of a finite dimensional MFG system, its linearization and the smoothness of the value function with respect to the initial measure. Let us mention, however, that even though the roadmap leading to the main results is following \cite{CarDelLasLio},  our analysis is fundamentally different from the one present in this reference. % \cite[Chapter 3]{CarDelLasLio}.

Let us recall the main result on the well-posedness of the MFG system, which is the basis of our analysis. This can be found in \cite[Theorem 7]{Amb18:2}. 
\begin{theorem}\label{thm:david_old}
Suppose that the assumptions \eqref{hyp:s}-\eqref{hyp:G-C11} take place and let $R>0$. There exists $\tilde T>0$ (depending on $R$ and the data) such that for all $0\le t_0<\tilde T$ the mean field games system 
\begin{equation}\label{eq:mfg}
\left\{
\begin{array}{ll}
-\partial_t u - \Delta u +\cH(t,x,\nabla u,m)=0, & (t,x)\in(t_0,\tilde T)\times\T^d,\\[7pt]
\partial_t m - \Delta m - \diver(m D_p\cH(t,x,\nabla u,m))=0, & (t,x)\in(t_0,\tilde T)\times\T^d,\\[7pt]
m(t_0,x)=m_0(x),\ \ u(\tilde T,x)=G(x,m_{\tilde T}(x)), & x\in\T^d
\end{array}
\right.
\end{equation}
has a unique classical solution $(u,m)$ for any $m_0\in \sQ_R$. Moreover, this solution has the regularity 
\begin{align*}
u\in L^\infty([t_0,\tilde T]; H^{s}(\T^d))\cap L^2([t_0,\tilde T]; H^{s+1}(\T^d))\cap C([t_0,\tilde T]; H^{s'}(\T^d)), \ \ \forall s'\in [0,s)
\end{align*}
and 
\begin{align*}
m \in L^\infty([t_0,\tilde T]; H^{s-1}(\T^d))\cap L^2([t_0,\tilde T]; H^{s}(\T^d))\cap C([t_0,\tilde T]; H^{s'-1}(\T^d)), \ \ \forall s' \in [0,s)
\end{align*}
and $(u,m)$ is uniformly bounded in the corresponding spaces by a constant depending only on $R>0$, $\cH$ and $G$.
\end{theorem}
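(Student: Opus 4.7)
The plan is to build $(u,m)$ through a short-time Picard iteration in the Sobolev scale, following the energy-method strategy of the author's earlier paper \cite{Amb18:2}. On a closed ball
\[
\mathcal{B}_{R'}=\big\{(\bar u,\bar m)\in L^\infty([t_0,\tilde T];H^{s}(\T^d))\times L^\infty([t_0,\tilde T];H^{s-1}(\T^d))\,:\,\|\bar u\|_{L^\infty H^s}+\|\bar m-\ov m\|_{L^\infty H^{s-1}}\le R'\big\}
\]
with $R'>R$ to be chosen depending on $R$ and the data, I would define a map $\Phi:(\bar u,\bar m)\mapsto(u,m)$ by first solving the backward semilinear HJB
\[
-\partial_t u-\Delta u+\cH(t,x,\nabla u,\bar m)=0,\qquad u(\tilde T,\cdot)=G(\cdot,\bar m(\tilde T,\cdot)),
\]
and then the linear forward Fokker--Planck equation
\[
\partial_t m-\Delta m-\diver(m\,D_p\cH(t,x,\nabla u,\bar m))=0,\qquad m(t_0,\cdot)=m_0.
\]
A fixed point of $\Phi$ is exactly a solution of \eqref{eq:mfg}.

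For stability of $\mathcal{B}_{R'}$ I would apply parabolic $H^s$ and $H^{s-1}$ energy estimates to the two decoupled equations. Testing the HJB against $(-\Delta)^s u$ and using \eqref{hyp:H1_original}--\eqref{hyp:H3_original}, the Gagliardo--Nirenberg and Sobolev embeddings (which are available thanks to the large value of $s$ chosen in \eqref{hyp:s}), together with the terminal bound \eqref{payoffAssumption}, yields $u\in L^\infty H^s\cap L^2 H^{s+1}$. The drift $D_p\cH(t,x,\nabla u,\bar m)$ is then uniformly bounded in $H^s$, so an analogous $H^{s-1}$ estimate for the Fokker--Planck equation produces $m\in L^\infty H^{s-1}\cap L^2 H^s$, with nonnegativity and conservation of mass preserved by the parabolic maximum principle. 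For $\tilde T-t_0$ small in terms of $R$ and the data, the quadratic nonlinear contributions are absorbed either by the dissipation or by a small prefactor from the short time horizon, so $\Phi$ stabilizes $\mathcal{B}_{R'}$.

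The main difficulty, and the reason short time is essential, is the contraction. Because $\cH$ depends \emph{locally} on $m$, there is no smoothing in the measure direction and the system must genuinely be handled as a forward--backward system. For two iterates $(u_i,m_i)=\Phi(\bar u_i,\bar m_i)$, I would write the linearized equations satisfied by $\delta u=u_1-u_2$ and $\delta m=m_1-m_2$, test $\delta u$ against $(-\Delta)^{r}\delta u$ and $\delta m$ against $(-\Delta)^{r-1}\delta m$ for the intermediate index $r$ from \eqref{con:r}, and sum the two identities after integrating over $[t_0,\tilde T]$. The terminal boundary contribution at $\tilde T$ is absorbed by the Lipschitz bound \eqref{hyp:G-Lip}, the initial one at $t_0$ vanishes because $\delta m(t_0)=0$, the quadratic remainder estimates \eqref{f1Bound}--\eqref{f2Bound} control the lower-order coupling, and the $\Delta$--dissipation dominates the top-order mixed terms up to a prefactor $O(\tilde T-t_0)$. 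A Gronwall argument then delivers a contraction factor strictly less than one in $C([t_0,\tilde T];H^{r})\times C([t_0,\tilde T];H^{r-1})$ for $\tilde T-t_0$ small, from which existence and uniqueness of $(u,m)$ follow; the stated continuity in $H^{s'}$ for $s'<s$ is obtained by interpolating $L^\infty H^s\cap L^2 H^{s+1}$ (resp.\ $L^\infty H^{s-1}\cap L^2 H^s$) in the usual way.
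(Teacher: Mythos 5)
The paper does not actually prove Theorem~\ref{thm:david_old}; it is imported verbatim as \cite[Theorem 7]{Amb18:2}, so there is no ``paper's proof'' here to compare against line by line. That said, your sketch is in the same general framework as \cite{Amb18:2} and as the paper's own treatment of the linearized system in Section~\ref{sec:step1}: short-time energy estimates at the levels $H^s$ (for $u$) and $H^{s-1}$ (for $m$), exploiting parabolic dissipation and the forward/backward split of the time integrations. However, you take a genuinely different route at the structural level: you propose a Banach fixed-point (ball invariance plus contraction), whereas the reference and the paper's Section~\ref{sec:step1} both use a mollified linear iteration scheme $\cJ_\e$ together with uniform energy bounds and Arzel\`a--Ascoli compactness to extract a solution, with uniqueness proved \emph{separately} by a stability estimate (this is exactly what Appendix~\ref{sec:appendix_estimate} does). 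The compactness route avoids ever having to show that the solution map is a strict contraction; your route buys existence and uniqueness simultaneously but requires you to actually exhibit a contraction constant below $1$, which is where your sketch has gaps.

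The principal gap is in how you close the contraction. You state that ``the terminal boundary contribution at $\tilde T$ is absorbed by the Lipschitz bound \eqref{hyp:G-Lip}'' and that the top-order terms carry an $O(\tilde T - t_0)$ prefactor. But the boundary contribution from $G$ carries the factor $\Upsilon$ (or $\kappa$), which does \emph{not} become small as $\tilde T-t_0\to 0$; it couples $\sup_t\|\delta u\|$ directly to $\sup_t\|\delta\bar m\|$ with an $O(1)$ constant. To obtain a contraction factor $<1$ one must weight the two energies (as the paper does in Appendix~\ref{sec:appendix_estimate} with $\varpi=\frac{1}{8\Upsilon}$, and in Section~\ref{sec:step1} by multiplying the $v$ inequality by $\frac{1}{8\kappa}$), so that the $\Upsilon$-factor contributes a coefficient strictly less than one in the weighted norm \emph{independently} of $\tilde T$, after which only the remaining terms are absorbed by small time. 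Omitting this weighting, your argument would leave you with a factor $\max\{\Upsilon,\, O(\tilde T-t_0)\}$, which does not contract when $\Upsilon\ge 1$. Relatedly, invoking ``a Gronwall argument'' is misleading here: because $\delta u$ is estimated by integrating backward from $\tilde T$ while $\delta m$ is integrated forward from $t_0$, the two inequalities cannot be concatenated into a single Gronwall-type differential inequality; the correct mechanism is the weighted sup-plus-dissipation bookkeeping described above. Finally, the hypotheses \eqref{f1Bound}--\eqref{f2Bound} are second-order Taylor remainder bounds used only in Section~\ref{sec:step3} for the master-function differentiability; for a contraction estimate on the map $\Phi$ the relevant assumption is the first-order Lipschitz condition \eqref{hyp:H3_original}, not the quadratic remainder bounds.
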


\begin{corollary}\label{cor:diff_u_m}
By the Sobolev embedding theorem, one has 
\begin{align*}
u\in C([t_0,\tilde T]; C^{2,\a}(\T^d)) \ \ {\rm{and}}\ \ m \in C([t_0,\tilde T]; C^{2,\alpha}(\T^d)).
\end{align*}
for some $\a\in[0,1)$. Using the fact that $(u,m)$ is a solution to the system \eqref{eq:mfg}, this further yields that one also has $u,m\in C^1([t_0,\tilde T]\times\T^d)$.
\end{corollary}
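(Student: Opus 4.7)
The corollary is really two short observations chained together, so my proof plan has two stages that I would carry out in order.

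First, to get the $C([t_0,\tilde T];C^{2,\alpha}(\T^d))$ regularity, I would pick $s'\in[0,s)$ sufficiently close to $s$ so that $s'-1>2+d/2$; such a choice is possible thanks to the standing assumption \eqref{hyp:s}, which guarantees $s-1>(d+5)/2$ and therefore $s-1-d/2>5/2$. The Sobolev embedding theorem on $\T^d$ then yields $H^{s'}(\T^d)\hookrightarrow C^{2,\alpha}(\T^d)$ and $H^{s'-1}(\T^d)\hookrightarrow C^{2,\alpha}(\T^d)$ for some $\alpha\in[0,1)$ (determined by the non-integer part of $s'-1-d/2$, shrunk if needed to keep $\alpha<1$). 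Composing with the continuity statements from Theorem \ref{thm:david_old}, namely $u\in C([t_0,\tilde T];H^{s'}(\T^d))$ and $m\in C([t_0,\tilde T];H^{s'-1}(\T^d))$, immediately gives the asserted $C([t_0,\tilde T];C^{2,\alpha}(\T^d))$ regularity of $u$ and $m$.

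Second, to upgrade to $C^1([t_0,\tilde T]\times\T^d)$ joint regularity, I would read off the time derivatives directly from the MFG system \eqref{eq:mfg}. The first step already shows that $x\mapsto u(t,x)$ and $x\mapsto m(t,x)$ are $C^{2,\alpha}$ uniformly in $t$, with $\nabla u,\nabla m,\Delta u,\Delta m$ all continuous in $(t,x)$. From the first equation of \eqref{eq:mfg},
\begin{equation}\nonumber
\partial_t u(t,x)=-\Delta u(t,x)+\cH(t,x,\nabla u(t,x),m(t,x)),
\end{equation}
the right-hand side is a composition of jointly continuous functions (using that $\cH$ is continuous in its four arguments by \eqref{hyp:H1_original}), hence $\partial_t u$ exists and is continuous on $[t_0,\tilde T]\times\T^d$. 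Combined with the joint continuity of $\nabla u$, this gives $u\in C^1([t_0,\tilde T]\times\T^d)$. For the second equation, I would expand
\begin{equation}\nonumber
\partial_t m=\Delta m+\nabla m\cdot D_p\cH(t,x,\nabla u,m)+m\bigl(\nabla_x\!\cdot\! D_p\cH+D^2_{pp}\cH:\nabla^2 u+D_p\partial_q\cH\cdot\nabla m\bigr),
\end{equation}
and observe that every factor on the right-hand side is continuous in $(t,x)$ by the first step and the smoothness of $\cH$. Therefore $\partial_t m$ is continuous on $[t_0,\tilde T]\times\T^d$, and together with the continuity of $\nabla m$ this yields $m\in C^1([t_0,\tilde T]\times\T^d)$.

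There is no real obstacle here, since the hypothesis \eqref{hyp:s} was precisely calibrated so that the Sobolev embedding lands safely inside $C^{2,\alpha}$; the only mild care needed is in choosing $s'$ strictly less than $s$ (as the stronger $L^\infty$-in-time bound in $H^s$ only gives weak continuity in time, whereas the statement requires strong continuity, which is exactly what $s'<s$ provides).
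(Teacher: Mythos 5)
Your proof is correct and matches the argument the paper intends (the paper states the corollary without giving a proof, but your two-stage route — Sobolev embedding from the $C([t_0,\tilde T];H^{s'})$ regularity of Theorem \ref{thm:david_old} with $s'<s$ chosen so that $s'-1>2+d/2$, then reading $\partial_t u$ and $\partial_t m$ off the PDE — is exactly the canonical one). The observation that one must use the $s'<s$ continuity-in-time statement rather than the $L^\infty([t_0,\tilde T];H^s)$ bound is also the right point of care.
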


The candidate for the solution to \eqref{eq:master_main} can be defined as
\begin{equation}\label{def:U}
U(t_0,x,m_0):=u(t_0,x),
\end{equation} 
where $(u,m)$ is the unique solution to \eqref{eq:mfg} with initial measure $m_0$ and initial time $t_0$. We aim to show $U$ solves \eqref{eq:master_main} in the sense of Definition \eqref{def:solution}. 

For this, we follow the following major steps. Let $(u,m)$ be a solution to \eqref{eq:mfg}.

\medskip

{\bf Step 1.} We linearize \eqref{eq:mfg} around this solution $(u,m)$ allowing perturbations of the form $m_0+\e\mu_0$ on the initial data  (here $\mu_0$ is typically a finite signed measure with 0 mass). This system reads as 

\begin{equation}\label{eq:linearized}
\left\{
\begin{array}{ll}
\ds-\partial_t v - \Delta v +D_p\cH(t,x,\nabla u,m)\cdot\nabla v+\partial_q\cH(t,x,\nabla u,m)\mu=0, & (t,x)\in(t_0,T)\times\T^d,\\[7pt]
\partial_t \mu - \Delta \mu - \diver\left[\mu D_p\cH(t,x,\nabla u,m)+mD_{pp}^2\cH(t,x,\nabla u,m)\nabla v\right] & \\ [7pt]
\ds\hspace{1.5cm}+\diver \left[mD_p\partial_q\cH(t,x,\nabla u,m)\mu\right]=0, & (t,x)\in(t_0,T)\times\T^d,\\[7pt]
\ds\mu(t_0,\cdot)=\mu_0,\ \ v(T,x)=\frac{\delta G}{\delta m}(x,m_T)\mu_T, & x\in\T^d.
\end{array}
\right.
\end{equation}
We show that this linearized system has a solution $(v,\mu)$ in suitable Sobolev spaces. In particular we have the following theorem.

\begin{theorem}\label{thm:linearized_intro}
Let $R>0$, $m_0\in \sQ_R$ and let $(u,m)$ be the solution to \eqref{eq:mfg} (given in Theorem \ref{thm:david_old}), let $T>0$. Then, there exists $T_{*}>0$ such that
\begin{itemize}
\item[(1)] if $\mu_0\in H^{s-1}(\T^d)$ and $0<T<T_{*},$ then there exist a unique pair $(v,\mu)\in L^{\infty}([0,T];H^{s}(\T^d))\times L^{\infty}([0,T]; H^{s-1}(\T^d))$ that solves \eqref{eq:linearized};
\item[(2)] if  $\mu_0\in H^{-s-1}(\T^d)$ and $0<T<T_{*},$ then there exists a unique solution $(v,\mu)$ to \eqref{eq:linearized} such that $v\in L^{\infty}([0,T];H^{-s}(\T^d))$ and $\mu\in L^{\infty}([0,T];H^{-s-1}(\T^d))$. This solution is understood in the sense of distributions. 
\end{itemize}
\end{theorem}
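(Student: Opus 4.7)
The plan is to exploit the fact that although \eqref{eq:linearized} is a forward-backward system, it is genuinely \emph{linear} in the unknowns $(v,\mu)$: by Theorem~\ref{thm:david_old} and Corollary~\ref{cor:diff_u_m}, all the coefficients $D_p\cH(t,x,\nabla u,m)$, $\partial_q\cH(t,x,\nabla u,m)$, $D_{pp}^2\cH(t,x,\nabla u,m)$, $D_p\partial_q\cH(t,x,\nabla u,m)$, as well as $m$ itself, are available a priori with the Sobolev regularity prescribed there. I would handle Part~(1) via a Banach fixed-point argument that uses short-time smallness to close the forward-backward coupling, and then obtain Part~(2) by duality, leveraging Part~(1) applied to the (structurally identical) adjoint system combined with an approximation of $\mu_0\in H^{-s-1}$ by smooth data.

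For Part~(1), I would set up the self-map $\Phi:\bar\mu\mapsto\mu$ on $L^\infty([t_0,T];H^{s-1}(\T^d))$ as follows. Given $\bar\mu$, first solve the backward linear parabolic equation
\begin{equation*}
-\partial_t v-\Delta v+D_p\cH\cdot\nabla v=-\partial_q\cH\,\bar\mu,\qquad v(T,\cdot)=\frac{\delta G}{\delta m}(\cdot,m_T)\bar\mu_T,
\end{equation*}
a standard backward heat-type problem; assumption \eqref{payoffAssumption} gives $\|v(T)\|_{H^s}^2\le\kappa\|\bar\mu_T\|_{H^{s-1}}^2$, and standard $H^s$ energy estimates with Gr\"onwall then yield $v\in L^\infty([t_0,T];H^s)\cap L^2([t_0,T];H^{s+1})$ with $\|v\|_{L^\infty H^s}\le C\bigl(\sqrt\kappa\|\bar\mu_T\|_{H^{s-1}}+T\|\bar\mu\|_{L^\infty H^{s-1}}\bigr)$. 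Next, solve the forward Fokker-Planck equation
\begin{equation*}
\partial_t\mu-\Delta\mu-\mathrm{div}\bigl[\mu D_p\cH+m D_{pp}^2\cH\nabla v-m D_p\partial_q\cH\,\mu\bigr]=0,\qquad \mu(t_0,\cdot)=\mu_0,
\end{equation*}
using standard $H^{s-1}$ parabolic energy estimates, with the divergence-form source controlled by $\|v\|_{L^\infty H^s}$. To prove contraction, I compare two iterates: the difference $\mu_1-\mu_2$ solves a homogeneous equation with zero initial data and source essentially $\mathrm{div}(m D_{pp}^2\cH\nabla(v_1-v_2))$, so parabolic regularization through Duhamel's formula yields $\|\mu_1-\mu_2\|_{L^\infty H^{s-1}}\le CT^{1/2}\|v_1-v_2\|_{L^\infty H^s}$, while the backward step gives $\|v_1-v_2\|_{L^\infty H^s}\le C(\sqrt\kappa+T)\|\bar\mu_1-\bar\mu_2\|_{L^\infty H^{s-1}}$. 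Composing, $\Phi$ is a strict contraction on $L^\infty H^{s-1}$ for $T<T_*$ small enough, yielding existence and uniqueness.

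For Part~(2), I would proceed by duality. Formal integration by parts in space and time shows that the adjoint of \eqref{eq:linearized} is again a linear forward-backward system of the same structure (with the roles of $v$ and $\mu$ essentially swapped and coefficients transposed), so Part~(1) applies to it and produces smooth solutions of the adjoint for smooth test data. Approximating $\mu_0\in H^{-s-1}(\T^d)$ by smooth $\mu_0^n$ and invoking Part~(1) to produce solutions $(v_n,\mu_n)$, I would test the family against smooth solutions of the adjoint; the duality pairing combined with the Part~(1) estimates for the adjoint produces uniform bounds on $\|\mu_n\|_{L^\infty H^{-s-1}}$ and $\|v_n\|_{L^\infty H^{-s}}$ in terms of $\|\mu_0^n\|_{H^{-s-1}}$. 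Passing to the limit yields the desired distributional solution; uniqueness follows from the density of smooth test pairs in the natural dual spaces.

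The main technical obstacle will be the careful bookkeeping in Part~(1): the terminal condition $v(T)=\frac{\delta G}{\delta m}(\cdot,m_T)\bar\mu_T$ does not become small as $T\to 0$, so the $\sqrt\kappa$ from \eqref{payoffAssumption} enters the contraction estimate non-trivially, and the required $T^{1/2}$ smallness must be extracted entirely on the forward side from parabolic regularization applied to the highest-order term $\mathrm{div}(m D_{pp}^2\cH\nabla v)$. This forces one to use the smoothing estimate $\|e^{\tau\Delta}\mathrm{div}(\cdot)\|_{H^{s-1}}\lesssim\tau^{-1/2}\|\cdot\|_{H^{s-1}}$ inside Duhamel's formula; it is therefore essential that the top-order source in the Fokker-Planck equation appear in divergence form. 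This is precisely the kind of forward-backward energy bookkeeping developed in \cite{Amb18:2}, and I expect the stability result stated in the appendix to supply much of the underlying technology.
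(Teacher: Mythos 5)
Your proposal is correct in outline but takes a genuinely different route from the paper in both parts.

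For Part~(1), the paper does not run a Banach contraction. Instead (Theorem~\ref{step1Theorem}) it sets up a mollified Picard iteration $(v^{n+1},\mu^{n+1})$ in which a Friedrichs mollifier $\mathcal J_\varepsilon$ is applied to every coupling term, so each iterate is smooth by construction; it then proves by induction a \emph{uniform-in-$n$} bound in $L^\infty H^s\times L^\infty H^{s-1}$ via $H^s/H^{s-1}$ forward--backward energy estimates (with Young parameters $\sigma_i\sim 1/T$ tuned so that the parabolic gains $\int\|\nabla v^{n+1}\|_{H^s}^2$ and $\int\|\nabla\mu^{n+1}\|_{H^{s-1}}^2$ absorb the derivative-losing sources), and finally extracts a convergent subsequence via Arzel\`a--Ascoli plus Sobolev interpolation. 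The paper's route simultaneously produces the $L^2$-in-time parabolic gains on $v$ and $\mu$, which are reused verbatim in Theorem~\ref{thm:v_H-s_bound} and in Section~\ref{sec:step3}; your Duhamel/semigroup contraction obtains only $L^\infty$-in-time control directly and would need supplementing to recover those. Your argument also needs to resolve a trace issue that the mollified iteration sidesteps: $\bar\mu\in L^\infty([0,T];H^{s-1})$ has no canonical evaluation at $t=T$, so the fixed-point space must in fact be chosen inside $C([0,T];H^{s-1-\epsilon})$ or similar to make $v(T)=\frac{\delta G}{\delta m}(\cdot,m_T)\bar\mu_T$ meaningful. Your identification of where the $T$-smallness must come from, namely entirely on the forward side from the divergence-form source while the $\sqrt\kappa$ in the terminal coupling remains undamped, is consistent with the structure of the paper's parameter choices.

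For Part~(2), the paper does \emph{not} invoke duality or the adjoint system. Theorem~\ref{thm:v_H-s_bound} proves the a priori bound $\sup_t\|v\|_{H^{-s}}\le c\|\mu_0\|_{H^{-s-1}}$ directly, by running the identical forward--backward energy method in negative-index spaces (with $\Lambda^{-s}$, $\Lambda^{-s-1}$ replacing $\Lambda^s$, $\Lambda^{s-1}$, the product estimate $\|fg\|_{H^{-s}}\le c\|f\|_{H^{-s}}\|g\|_{H^s}$ in place of the algebra property, and the small technical fix \eqref{notQuiteEquivalent} for the $k=0$ mode); Corollary~\ref{cor:weaker_v_mu} then extends to rough $\mu_0$ by density. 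Your duality route is plausible but hides real bookkeeping: the adjoint of a \emph{coupled forward--backward} system is not simply the same system with $v$ and $\mu$ exchanged, because the nonlocal terminal coupling $v(T)=\frac{\delta G}{\delta m}(\cdot,m_T)\mu(T)$ contributes a boundary term when integrating by parts in time, and one must check that the regularity indices from Part~(1) applied to the adjoint (giving $H^s\times H^{s-1}$) actually furnish the $H^{s+1}$ test functions needed to dualize $\mu_n(t)$ into $H^{-s-1}$. The paper's direct negative-index energy estimate avoids all of this, recycles Part~(1) line-by-line, and yields a constant $c$ with the same coefficient-dependence that the later sections rely on.
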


The proof of part (1) of this theorem is provided in Section \ref{sec:step1}. Part (2) is a consequence of results in Section \ref{sec:step2} (we detail this in Corollary \ref{cor:weaker_v_mu}).

\medskip

{\bf Step 2.} We show that if $(v,\mu)$ is a solution to \eqref{eq:linearized} with initial condition $\mu_0$, then %we have the representation $\ds v(t_0,x)=\int_{\T^d}\frac{\delta U}{\delta m}(t_0,x,m_0,y)\dd\mu_0(y).$ More precisely, we show that 
the operator $\mu_0\mapsto v$ is continuous and linear and there exists $K(t_0,x,m_0,\cdot):\T^d\to\R$ such that
$$v(t_0,x)=\int_{\T^d}K(t_0,x,m_0,y)\dd\mu_0(y).$$
This way we have a candidate for the $L^2$-G\^ateaux derivative of the master function, i.e.  
\begin{equation}\label{def:delta_U-K}
K(t_0,x,m_0,\cdot)=\frac{\delta U}{\delta m}(t_0,x,m_0,\cdot).
\end{equation}
This result is obtained as a consequence of a Riesz-type representation theorem. Let us comment on the regularity of $K(t_0,\cdot,m_0,\cdot)$. As a consequence of Theorem \ref{thm:linearized_intro}(2), 
$K$ will have $H^s$ regularity in the last variable. However, let us underline that for $(t_0,m_0,y)\in (0,T_*)\times\sQ_R\times\T^d$ fixed, the application $x\mapsto K(t_0,x,m_0,y)$ will a priori have only $H^{-s}$ regularity. Therefore, a special care is needed in the derivation of the master equation. This is in contrast with the corresponding results from \cite{CarDelLasLio}, where as a consequence of the regularizing effect of the Hamiltonian in the measure variable,  $\frac{\delta U}{\delta m}$ is shown to be smooth also in the second variable.

We provide the details on these arguments in Section \ref{sec:step2} and Subsection \ref{subsec:regularity}.

%\begin{remark}
%Let us notice that this step should be an easy application of the Riesz representation theorem in some Hilbert space $H^s(\T^d)$, and in this way the regularity of $K$ in the $y$-variable should be a consequence of this representation, while the regularity of $K$ in the $x$-variable should be inherited from $v$.
%\end{remark}

\medskip

{\bf Step 3.} Lastly, by a  Taylor expansion argument, we show that the kernel $K$ obtained in the previous step is indeed corresponding to the $L^2$-G\^ateaux derivative $\frac{\d U}{\d m}$, i.e. \eqref{def:delta_U-K} is shown rigorously in a suitable sense. To achieve this, we argue as follows. Let $u(t_0,x)$ be the value function in \eqref{eq:mfg} with initial measure $m_0$ and $\tilde u(t_0,x)$ the value function in \eqref{eq:mfg} with initial measure $\tilde m_0$. Let moreover $v(t_0,x)$ be the solution of the first equation in \eqref{eq:linearized} with $\mu_0:=\tilde m_0-m_0$. Then we show that 
$$\| \tilde u(t_0,\cdot) -u(t_0,\cdot) - v(t_0,\cdot) \|_{H^r}= o\left(\|m_0-\tilde m_0\|_{H^{-1}}\right),$$
for $r>0$ satisfying \eqref{con:r}, uniformly with respect to $t_0$.

Actually this further implies that 
$$\left\| U(t_0,\cdot,\tilde m_0) -U(t_0,\cdot,m_0) - \int_{\T^d}K(t_0,\cdot,m_0,y)\dd(\tilde m_0-m_0) \right\|_{H^r}= o\left(\|m_0-\tilde m_0\|_{H^{-1}}\right),$$
uniformly with respect to $t_0$. So in particular, since $r>0$ satisfies \eqref{con:r}, the Sobolev embedding theorem yields 
$$\sup_{t_0\in[0,T],x\in\T^d}\left| U(t_0,x,\tilde m_0) -U(t_0,x,m_0) - \int_{\T^d}K(t_0,x,m_0,y)\dd(\tilde m_0-m_0) \right|= o\left(\|m_0-\tilde m_0\|_{H^{-1}}\right),$$
i.e. the necessary differentiability property of $U$ with respect to the measure variable.

To perform the analysis in this step, we will rely on an important observation. Using the notation 
\begin{equation}\label{def:z}
z(t,x):=\tilde u(t,x)-u(t,x)-v(t,x),
\end{equation}
we will have that $z$ solves the equation
\begin{equation}\label{eq:z}
\left\{
\begin{array}{rl}
-\partial_t z-\Delta z +D_p\cH(t,x,\nabla u,m)\cdot\nabla z =&-\cH(t,x,\nabla\tilde u,\tilde m)+ \cH(t,x,\nabla u, m)
+\partial_q\cH(t,x,\nabla u,m)\mu\\[4pt]
& + D_p \cH(t,x,\nabla u,m)\cdot(\nabla \tilde u-\nabla u),\\[4pt]
z(T,\cdot)=&G(x,\tilde m_T)-G(x, m_T)-\frac{\delta G}{\delta m}(x,m_T)\mu_T,
\end{array}
\right.
\end{equation}
and thus, we essentially show that there exists $C>0$ such that
$$\|z(t,\cdot)\|_{H^r}\le C \|m_0-\tilde m_0\|_{H^{-1}}^{\frac54}.$$
In fact the Sobolev embedding theorem further implies that 
$$\|z(t,\cdot)\|_{H^r}\le C \|m_0-\tilde m_0\|_{H^{s}}^{\frac54},$$
so the differentiability property holds true in $H^s(\T^d)$. We provide the details of this step in Section \ref{sec:step3}, where the previous crucial estimate is provided by Theorem \ref{thm:z_regularity}.

\subsection{Some preliminary estimates and Sobolev norms}
%Added by David on 2/1/2019

We define the operator $\Lambda$ to be a Fourier multiplier operator with symbol
\begin{equation}\nonumber
\mathcal{F}\Lambda(k)=\left(1+|k|^{2}\right)^{1/2},
\end{equation}
so that $\Lambda^{-1}$ is the operator with symbol
\begin{equation}\nonumber
\mathcal{F}\Lambda^{-1}(k)=\frac{1}{\left(1+|k|^{2}\right)^{1/2}}.
\end{equation}
For $f \in L^1(\mathbb T^d)$ we define 
\[
\hat f(k)= \mathcal F [f](k):= {1\over (2 \pi)^d } \int_{\mathbb T^d} e^{- i k \cdot x} f(x) dx, \qquad \forall k \in \mathbb Z^d.
\]
For $l \in \mathbb R$ and $f \in H^l(\mathbb T^d)$ we define 
\[
\Lambda^l f(x)= \sum_{k \in \mathbb Z^d} |\hat f(k)| { (1+|k|^2)^{l/2}} e^{i k\cdot x}.
\]
We have the norm 
\begin{equation}\label{-1norm}
 \|f\|^2_{H^l}= \sum_{k \in \mathbb Z^d} |\hat f(k)|^2 (1+|k|^2)^{l}=\langle \Lambda^{l}f,\Lambda^{l}f\rangle_{L^{2}}=\|\Lambda^{l}f\|^2_{L^{2}}.
 \end{equation}
 We shall use the following convention: for $v=(v_1, \cdots, v_d) \in (H^l(\T^d))^d$ we set  
 \[
  \|v\|^2_{H^l}:= \sum_{j=1}^d  \|v_i\|^2_{H^l}.
 \]

With this in mind, the identity $\widehat{\nabla f}(k)= ik \hat f(k)$ allows to obtain that 
\[
\|\Lambda^{-s-1} \nabla f\|_{L^2}^{2}+ \|\Lambda^{-s-1} f\|_{L^2}^{2}= \sum_{k \in \mathbb Z^d}{|k|^2|\hat f(k)|^2 \over (1+|k|^2)^{s+1}} + \sum_{k \in \mathbb Z^d}{|\hat f(k)|^2 \over (1+|k|^2)^{s+1}}= \sum_{k \in \mathbb Z^d}{|\hat f(k)|^2 \over (1+|k|^2)^{s}}.
\]
We read off   
\begin{equation}\label{explicitEquivalence}
\|\Lambda^{-s-1} \nabla f\|_{L^2}^{2}+ \|\Lambda^{-s-1} f\|_{L^2}^{2}= \|\Lambda^{-s} f\|_{L^2}^{2}.
\end{equation}

%We will be estimating $v$ in $H^{-1},$ and we will do this by estimating $\Lambda^{-1}v$ where
%$\Lambda$ is the operator with symbol $\hat{\Lambda}(k)=|k|.$  To deal with products, we will make use of
%commutator estimates, specifically
%\begin{equation}\nonumber
%\left\|\left[\Lambda^{-1},f\right]\psi\right\|_{H^{0}}\leq c\|f\|_{H^{1}}\|\psi\|_{H^{-1}}.
%\end{equation}
%(Note: I know how to prove this in one spatial dimension.  Am trying to figure out if it's true in general.)
If $f \in H^{-s}(\T^d)$ and $g \in H^s(\T^d)$ we have  
\begin{equation}\label{dualH^s}
\langle f, g\rangle= \langle \Lambda^{-s}f, \Lambda^s g\rangle_{L^2}  
\end{equation}
and so we may conclude
\begin{equation}\label{dualH^s1}
\|f\|_{H^{-s}}= \sup_{\|\phi\|_{H^{s}}=1} \langle f, \phi\rangle.
\end{equation}

%%Using the triangle inequality, we obtain  
%%\[
%%|k|^2 \leq (|k-\eta|+|\eta|)^2 \leq 2|k-\eta|^2+2|\eta|^2
%%\]
%%Thus, 
%%\[
%%1+|k|^2 \leq 1+ 2|k-\eta|^2+2|\eta|^2 \leq 2 (1+ |k-\eta|^2)(1+|\eta|^2)
%%\]
%%\begin{equation}\label{eq:july07.2020.1} 
%%\sqrt{1+|k|^2} \leq \sqrt 2 \sqrt{1+|\eta|^2} \sqrt{1+|k-\eta|^2}
%%\end{equation}
%%If $s>0$ and $f, g \in H^s(\T^d)$ we have 
%%\[
%%\sqrt{1+|k|^2}^{2s} |\widehat {fg}(k)|^2 =\sqrt{1+|k|^2}^{2s} |\hat f \ast \hat g(k)|^2 = \sqrt{1+|k|^2}^{2s} \bigg|\int_{\T^d} \hat f(k-\eta) \hat g(\eta) d\eta\bigg|^2.
%%\]
%%We use \eqref{eq:july07.2020.1} and Jensen's inequality  to conclude that 
%%
%%\[
%%\int_{\T^d} \sqrt{1+|k|^2}^{2s} |\widehat {fg}(k)|^2 dk \leq 2^s \int_{\T^d \times \T^d} \sqrt{1+|\eta|^2}^{2s} \sqrt{1+|k-\eta|^2}^{2s}| \hat f(k-\eta)|^2 |\hat g(\eta)|^2 d\eta dk.
%%\]
%%We use Fubini's Theorem to conclude that 
%%\[
%%\int_{\T^d} \sqrt{1+|k|^2}^{2s} |\widehat {fg}(k)|^2 dk \leq 2^s  \int_{\T^d} \sqrt{1+|k-\eta|^2}^{2s}| \hat f(k-\eta)|^2 dk\int_{\T^d } \sqrt{1+|\eta|^2}^{2s} |\hat g(\eta)|^2 d\eta.
%%\]

Let us recall some important inequalities that will play important roles in our analysis. %Since $s>d/2,$ 
%there exists a universal constant $c>0$ such that for any $f\in H^{s}$ and any $g\in H^{s},$ we have the bound
%\begin{equation}\label{eq:july07.2020.2} 
%\|fg\|_{H^{s}}\leq  c \|f\|_{H^{s}} \|g\|_{H^{s}}.
%\end{equation}

%By \eqref{dualH^s1},
%\[
%\|fg\|_{H^{-s}}=\sup_{\|\phi\|_{H^{s}}=1}\langle fg,\phi\rangle_{L^{2}}= \sup_{\|\phi\|_{H^{s}}=1}\langle f, g\phi\rangle_{L^{2}} \leq  \sup_{\|\phi\|_{H^{s}}=1} \|f\|_{H^{-s}} \|g \phi\|_{H^{s}}.
%\]
%We use \eqref{eq:july07.2020.2}  to conclude that
%\begin{equation}\label{negativeProduct-s}
%\|fg\|_{H^{-s}} \leq  c \|f\|_{H^{-s}} \|g\|_{H^{s}}.
%\end{equation}

We will need a product estimate for negative-index Sobolev spaces; in particular we want
\begin{equation}\label{negativeProduct}
\|fg\|_{H^{-1}}\leq \|f\|_{H^{-1}}\|g\|_{H^{k}}
\end{equation}
for some $k>0$ large enough. %{\color{blue}(by \eqref{estim:H-s} below, it seems that $k$ here can be even taken to be 1)}. 
We can establish this through duality and the corresponding estimate for positive-index Sobolev spaces.
In particular we have
\begin{equation}\label{negativeBound1}
\|fg\|_{H^{-1}}=\sup_{\|\phi\|_{H^{1}}=1}\langle fg,\phi\rangle_{L^{2}}
=\sup_{\|\phi\|_{H^{1}}=1}\langle f,g\phi\rangle_{L^{2}}\leq\sup_{\|\phi\|_{H^{1}}=1}\|f\|_{H^{-1}}\|g\phi\|_{H^{1}}.
\end{equation}
We then use the corresponding estimate for positive-index spaces (see, for example, \cite{Beale}),
\begin{equation}\label{estim:Hr-Hs}
\|h\psi\|_{H^{r}}\leq c \|h\|_{H^{r}}\|\psi\|_{H^{s}},
\end{equation}
as long as $0\leq r\leq s$ and $s> d/2.$
%{\color{blue}[Can one improve \eqref{negativeProduct-s} to get this?]}
So then \eqref{negativeBound1} can be estimated further as
\begin{equation}\nonumber
\|fg\|_{H^{-1}}\leq\sup_{\|\phi\|_{H^{1}}=1}c\|f\|_{H^{-1}}\|\phi\|_{H^{1}}\|g\|_{H^{k}}=c\|f\|_{H^{-1}}\|g\|_{H^{k}},
\end{equation}
where our conditions on $k$ are $k\geq1$ and $k>d/2.$  We can take $k=\lceil\frac{d+1}{2}\rceil.$

In the same spirit, for $s$ satisfying \eqref{hyp:s}, we have the inequality %\blue{similarly can \eqref{negativeProduct-s} be improved to this?}
\begin{equation}\label{estim:H-s}
\|fg\|_{H^{-s}}\le c\|f\|_{H^{-s}}\|g\|_{H^s}.
\end{equation}

We also need the most standard Sobolev interpolation inequality.  Let $0<\alpha<\beta$ be given, and let $f\in H^{\beta}.$
Then the following holds:  
\begin{equation}\label{sobolevInterpolation}
\|f\|_{H^{\alpha}}\leq c\|f\|_{L^{2}}^{1-\alpha/\beta}\|f\|_{H^{\beta}}^{\alpha/\beta}.
\end{equation}
The proof of this can be found many places, including in \cite{ambroseThesis}.

\section{Well-posedness of the linearized system}\label{sec:step1}
%Started by David on 1/30/2019
%Restarted from scratch on 5/30/2019

\begin{theorem}\label{step1Theorem}
Let $s$ satisfy \eqref{hyp:s}.  Let $T>0$ be given, and let $m_{0}\in H^{s-1}$ be given.  Let 
the payoff function $G$ satisfy \eqref{payoffAssumption}.
Let $u\in L^{\infty}([0,T];H^{s})$ and $m\in L^{\infty}([0,T];H^{s-1})$ solve \eqref{eq:mfg}.  
Let $\mu_{0}\in H^{s-1}$ be given.  There exists $T_{*}>0$ such that if $0<T<T_{*},$ 
then there exist $v\in L^{\infty}([0,T];H^{s})$ and
$\mu\in L^{\infty}([0,T];H^{s-1})$ which satisfy \eqref{eq:linearized}.
\end{theorem}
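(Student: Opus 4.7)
The plan is to prove Theorem \ref{step1Theorem} via a Banach fixed point argument that decouples the forward--backward system \eqref{eq:linearized}, in the spirit of the energy techniques of \cite{Amb18:2} used to establish Theorem \ref{thm:david_old}. Given $\bar\mu \in L^\infty([t_0,T]; H^{s-1})$, I would first solve the backward linear parabolic equation
\begin{equation*}
-\partial_t v - \Delta v + D_p\cH(t,x,\nabla u,m)\cdot \nabla v = -\partial_q\cH(t,x,\nabla u, m)\bar\mu,\quad v(T,\cdot) = \frac{\delta G}{\delta m}(\cdot,m_T)\bar\mu_T,
\end{equation*}
to obtain $v = v[\bar\mu] \in L^\infty([t_0,T];H^s) \cap L^2([t_0,T];H^{s+1})$; the frozen coefficients are smooth functions evaluated on the $(u,m)$ supplied by Theorem \ref{thm:david_old}, so this is a classical linear parabolic problem. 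I would then substitute this $v$ into the forward Fokker--Planck-type equation for $\mu$ in \eqref{eq:linearized} and solve for $\mu = \Phi(\bar\mu) \in L^\infty([t_0,T];H^{s-1}) \cap L^2([t_0,T];H^s)$. A fixed point of $\Phi$ is then precisely the desired solution pair.

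The key analytic input is a pair of energy estimates at matching Sobolev levels. Applying $\Lambda^s$ to the $v$-equation, pairing with $\Lambda^s v$, and using the product estimate \eqref{estim:Hr-Hs} (which applies because $s>d/2$) together with Young's inequality to absorb the parabolic gain $\|v\|_{H^{s+1}}^2$, I expect to obtain
\begin{equation*}
\|v\|_{L^\infty H^s}^2 + \|v\|_{L^2 H^{s+1}}^2 \le C\|v(T,\cdot)\|_{H^s}^2 + CT\|\bar\mu\|_{L^\infty H^{s-1}}^2 \le C(\kappa + T)\|\bar\mu\|_{L^\infty H^{s-1}}^2,
\end{equation*}
where the last step uses precisely the regularizing assumption \eqref{payoffAssumption}. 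Applying $\Lambda^{s-1}$ to the $\mu$-equation, pairing with $\Lambda^{s-1}\mu$, and integrating by parts to move one derivative off each divergence, the most delicate term $\diver[m D_{pp}^2 \cH \nabla v]$ contributes an inner product controlled via \eqref{estim:Hr-Hs} by $\epsilon\|\mu\|_{H^s}^2 + C\|v\|_{H^s}^2$; the terms linear in $\mu$ all absorb into $CT\|\mu\|_{L^\infty H^{s-1}}^2$, giving
\begin{equation*}
\|\mu\|_{L^\infty H^{s-1}}^2 + \|\mu\|_{L^2 H^s}^2 \le C\|\mu_0\|_{H^{s-1}}^2 + CT\|\mu\|_{L^\infty H^{s-1}}^2 + CT(\kappa+T)\|\bar\mu\|_{L^\infty H^{s-1}}^2.
\end{equation*}
Together, these show that $\Phi$ preserves a ball $B_r \subset L^\infty([t_0,T]; H^{s-1})$ with $r \sim \|\mu_0\|_{H^{s-1}}$ whenever $T$ is small in terms of $\kappa, R$ and the data.

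For the contraction step I would repeat the same energy computation for the differences $v_1 - v_2$ and $\mu_1 - \mu_2$ corresponding to $\bar\mu_1, \bar\mu_2 \in B_r$. The source for $v_1 - v_2$ is linear in $\bar\mu_1 - \bar\mu_2$, and the source for $\mu_1 - \mu_2$ is linear in $v_1 - v_2$, so chaining the two estimates yields
\begin{equation*}
\|\mu_1 - \mu_2\|_{L^\infty H^{s-1}}^2 \le C T(\kappa + T)\|\bar\mu_1 - \bar\mu_2\|_{L^\infty H^{s-1}}^2,
\end{equation*}
which is a strict contraction for $T < T_*$, with $T_*$ depending only on $\kappa, R$ and $\cH$; the Banach fixed point theorem then produces the required solution, and uniqueness in this class follows by running the same estimate on the difference of two putative solutions. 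The main obstacle I anticipate is closing this forward--backward loop in the matched pair $H^s \times H^{s-1}$ without invoking any smoothing or monotonicity of $\cH$ in the measure variable: the only good quantity available is the short-time factor $T$ itself, so every commutator coming from $\Lambda^s$ applied to products involving the rough coefficients $D_p\cH(\cdot,\nabla u,m)$ and $\partial_q\cH(\cdot,\nabla u,m)$ must be arranged, via Young's inequality, to end up multiplied by $T$ rather than free-floating. A secondary subtlety is that the terminal datum $v(T,\cdot) = \frac{\delta G}{\delta m}(\cdot,m_T)\bar\mu_T$ already saturates the regularity budget in \eqref{payoffAssumption}, so there is no slack for any loss in the backward estimate; this is precisely why the smoothing property of $G$ is indispensable to turn the otherwise ill-posed backward/forward coupling into a well-defined fixed point problem.
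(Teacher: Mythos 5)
Your proposal is correct in its essentials, but it follows a genuinely different route from the paper. The paper sets up a simultaneous Picard iteration $(v^n,\mu^n)\mapsto (v^{n+1},\mu^{n+1})$ in which both new iterates solve forced heat equations with mollified sources depending on the previous pair; it then derives a uniform-in-$n$ bound (the inductive hypothesis \eqref{inductiveHypothesis}) in $L^\infty H^s\times L^\infty H^{s-1}$ with the accompanying $L^2 H^{s+1}\times L^2 H^s$ parabolic gains, and passes to the limit by Arzel\`a--Ascoli plus interpolation. You instead decouple the system: solve the backward $v$-equation for a frozen $\bar\mu$, feed the resulting $v$ into the Fokker--Planck equation to get $\Phi(\bar\mu)=\mu$, and close via a Banach contraction on a ball of $L^\infty([0,T];H^{s-1})$. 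Both arguments rest on the same energy identities at the matched levels $H^s\times H^{s-1}$, the same use of \eqref{payoffAssumption} to control $v(T,\cdot)$ and produce the factor $\kappa$, and the same mechanism of extracting a small-$T$ factor from the time integral; your route gives uniqueness in the ball as an automatic byproduct of the contraction, while the paper's mollified iteration has the advantage that every iterate solves a forced heat equation with smooth data, making the energy computations and the limit passage unconditionally rigorous. A subtlety worth spelling out in your version: with $\bar\mu$ only in $L^\infty H^{s-1}$, the pairing $\langle \Lambda^s v,\Lambda^s(\partial_q\mathcal H\,\bar\mu)\rangle$ cannot be bounded by $\|v\|_{H^s}\|\partial_q\mathcal H\,\bar\mu\|_{H^s}$ (that would need $\bar\mu\in H^s$); you must shift a derivative, $\langle\Lambda^{s+1}v,\Lambda^{s-1}(\partial_q\mathcal H\,\bar\mu)\rangle\le\|v\|_{H^{s+1}}\|\bar\mu\|_{H^{s-1}}$, and absorb $\|v\|_{H^{s+1}}^2$ into the parabolic dissipation. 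The paper avoids this by keeping both the $L^\infty H^{s-1}$ and $L^2 H^s$ norms of $\mu^n$ in its inductive hypothesis and bounding $\|\mu^n\|_{H^s}$ directly by \eqref{dealingWithGradient}. Your phrasing ``classical linear parabolic problem'' also glosses over the fact that the frozen coefficients $D_p\mathcal H(\cdot,\nabla u,m)$, $\partial_q\mathcal H(\cdot,\nabla u,m)$ lie only in $L^\infty H^{s-1}$ in time, so each step of the fixed point already needs an approximation or energy argument of the kind the paper's mollification supplies; this is a technical rather than conceptual point, but should not be elided.
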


\begin{proof}
We set up an iterative scheme for $v$ and $\mu,$
\begin{equation}\label{iteratedV}
\partial_{t}v^{n+1}=-\Delta v^{n+1}+\mathcal{J}_{\varepsilon}(D_{p}\mathcal{H}\cdot\nabla v^{n})
+\mathcal{J}_{\varepsilon}(\partial_q\mathcal{H}\mu^{n}),
\end{equation}
\begin{equation}\label{iteratedMu}
\partial_{t}\mu^{n+1}=\Delta\mu^{n+1}+\mathcal{J}_{\varepsilon}\mathrm{div}(\mu^{n}D_{p}\mathcal{H})
+\mathcal{J}_{\varepsilon}\mathrm{div}(mD_{pp}^{2}\mathcal{H}\nabla v^{n})
+\mathcal{J}_{\varepsilon}\mathrm{div}(mD_p\partial_q\mathcal{H}\mu),
\end{equation}
with mollified data
\begin{equation}\nonumber
v^{n+1}(T,\cdot)=\mathcal{J}_{\varepsilon}\left(\frac{\delta G}{\delta m}\mu^{n}(T,\cdot)\right),\qquad
\mu^{n+1}(0,\cdot)=\mathcal{J}_{\varepsilon}\mu_{0}.
\end{equation}
For parameter $\varepsilon>0,$ the operator $\mathcal{J}_{\varepsilon}$ is a Friedrichs mollifier.
We have suppressed the arguments of $\mathcal{H},$ but of course $\mathcal{H}$ depends on the underlying solution
$\nabla u$ and $m$ rather than the linearized quantities $\nabla v$ and $\mu.$
We intialize the iteration with $v^{0}=0$ and $\mu^{0}=0.$  Note that $v^{0}$ and $\mu^{0}$ are (trivially) infinitely smooth,
and and $v^{n+1}$ and $\mu^{n+1}$ satisfy forced heat equations with infinitely smooth data and forcing.  Therefore
$v^{n+1}$ and $\mu^{n+1}$ exist and are infinitely smooth.  Note that while of course the solutions depend on both
regularization parameters $n$ and $\varepsilon,$ we suppress the dependence on $\varepsilon$ for the time being.

We now make energy estimates for $v^{n+1}$ and $\mu^{n+1}.$  Define $E_{v}$ and $E_{\mu}$ to be
\begin{equation}\nonumber
E_{v}=\frac{1}{2}\|v^{n+1}\|_{H^{s}}^{2}=\frac{1}{2}\int_{\mathbb{T}^{d}}(\Lambda^{s}v^{n+1})^{2}\ dx,
\end{equation}
\begin{equation}\nonumber
E_{\mu}=\frac{1}{2}\|\mu^{n+1}\|_{H^{s-1}}^{2}=\frac{1}{2}\int_{\mathbb{T}^{d}}(\Lambda^{s-1}\mu^{n+1})^{2}\ dx.
\end{equation}

Then the time derivative of $E_{v}$ satisfies
\begin{multline}\nonumber
\frac{dE_{v}}{dt}=\int_{\mathbb{T}^{d}}(\Lambda^{s}v^{n+1})(\Lambda^{s}\partial_{t}v^{n+1})\ dx
=-\int_{\mathbb{T}^{d}}(\Lambda^{s}v^{n+1})(\Lambda^{s}\Delta v^{n+1})\ dx
\\
+\int_{\mathbb{T}^{d}}(\Lambda^{s}v^{n+1})(\mathcal{J}_{\varepsilon}\Lambda^{s}(D_p\mathcal{H}\cdot\nabla v^{n}))\ dx
+\int_{\mathbb{T}^{d}}(\Lambda^{s}v^{n+1})(\mathcal{J}_{\varepsilon}\Lambda^{s}(\partial_q\mathcal{H}\mu^{n})\ dx
=I_{v}+II_{v}+III_{v}.
\end{multline}
We integrate this in time from time $t$ to time $T,$ arriving at (after slight rearranging of terms)
\begin{equation}\nonumber
\|v^{n+1}(t,\cdot)\|_{H^{s}}^{2}=\|v^{n+1}(T,\cdot)\|_{H^{s}}^{2}-2\int_{t}^{T}I_{v}\ d\tau
-2\int_{t}^{T}II_{v}\ d\tau - 2\int_{t}^{T}III_{v}\ d\tau.
\end{equation}

The time derivative of $E_{\mu}$ satisfies
\begin{multline}\nonumber
\frac{dE_{\mu}}{dt}=\int_{\mathbb{T}^{d}}(\Lambda^{s-1}\mu^{n+1})(\Lambda^{s-1}\partial_{t}\mu^{n+1})\ dx
=\int_{\mathbb{T}^{d}}(\Lambda^{s-1}\mu^{n+1})(\Lambda^{s-1}\Delta\mu^{n+1})\ dx
\\
+\int_{\mathbb{T}^{d}}(\Lambda^{s-1}\mu^{n+1})(\Lambda^{s-1}\mathcal{J}_{\varepsilon}\mathrm{div}
(\mu^{n}D_{p}\mathcal{H}))\ dx
+\int_{\mathbb{T}^{d}}(\Lambda^{s-1}\mu^{n+1})(\Lambda^{s-1}\mathcal{J}_{\varepsilon}\mathrm{div}
(mD_{pp}^{2}\mathcal{H}\cdot\nabla v^{n}))\ dx
\\
+\int_{\mathbb{T}^{d}}(\Lambda^{s-1}\mu^{n+1})(\Lambda^{s-1}\mathcal{J}_{\varepsilon}\mathrm{div}
(mD_p\partial_q\cH\mu^{n}))\ dx
=I_{\mu}+II_{\mu}+III_{\mu}+IV_{\mu}.
\end{multline}
We integrate this in time from time zero to time $t,$ finding
\begin{equation}\nonumber
\|\mu^{n+1}(t,\cdot)\|_{H^{s-1}}^{2}=\|\mu^{n+1}(0,\cdot)\|_{H^{s-1}}^{2}
+2\int_{0}^{t}I_{\mu}\ d\tau +2\int_{0}^{t}II_{\mu}\ d\tau +2\int_{0}^{t}III_{\mu}\ d\tau + 2\int_{0}^{t}IV_{\mu}\ d\tau.
\end{equation}

We begin estimating with the terms involving $I_{v},$ $II_{v},$ and $III_{v}.$  For $I_{v}$ we integrate by parts:
\begin{equation}\nonumber
I_{v}=\int_{\mathbb{T}_{d}}(\Lambda^{s}\nabla v^{n+1})^{2}\ dx,
\end{equation}
and integrating in time we have
\begin{equation}\nonumber
-\int_{t}^{T}I_{v}=-\int_{t}^{T}\|\nabla v^{n+1}\|_{H^{s}}^{2}\ d\tau.
\end{equation}
For the term with $II_{v}$ we use Young's inequality with positive parameter $\sigma_{1}$ and \eqref{estim:Hr-Hs} to obtain
\begin{equation}\nonumber
|II_{v}|\leq\frac{\sigma_{1}}{2}\|v^{n+1}\|_{H^{s}}^{2}+\frac{1}{2\sigma_{1}}\|D_{p}\mathcal{H}\cdot\nabla v^{n}\|_{H^{s}}^{2}
\leq\frac{\sigma_{1}}{2}\|v^{n+1}\|_{H^{s}}^{2}+\frac{c}{\sigma_{1}}\|\nabla v^{n}\|_{H^{s}}^{2}.
\end{equation}
Integrating in time and making some elementary manipulations, this becomes
\begin{equation}\nonumber
-\int_{t}^{T}II_{v}\ d\tau\leq\frac{\sigma_{1}T}{2}\sup_{t\in[0,T]}\|v^{n+1}\|_{H^{s}}^{2}
+\frac{c}{\sigma_{1}}\int_{t}^{T}\|\nabla v^{n}\|_{H^{s}}^{2}\ d\tau.
\end{equation}
We treat the term with $III_{v}$ similarly:
\begin{equation}\nonumber
|III_{v}|\leq\frac{\sigma_{2}}{2}\|v^{n+1}\|_{H^{s}}^{2}+\frac{1}{2\sigma_{2}}\|\partial_q\mathcal{H}\mu^{n}\|_{H^{s}}^{2}
\leq\frac{\sigma_{2}}{2}\|v^{n+1}\|_{H^{s}}^{2}+\frac{c}{\sigma_{2}}\|\mu^{n}\|_{H^{s}}^{2},
\end{equation}
and integrating in time,
\begin{equation}\nonumber
-\int_{0}^{T}III_{v}\ d\tau\leq\frac{\sigma_{2}T}{2}\sup_{t\in[0,T]}\|v^{n+1}\|_{H^{s}}^{2}+\frac{c}{\sigma_{2}}
\int_{t}^{T}\|\mu^{n}\|_{H^{s}}^{2}\ d\tau.
\end{equation}

We next turn to the terms coming from $E_{\mu}.$  We begin with the $I_{\mu}$ term, integrating by parts:
\begin{equation}\nonumber
I_{\mu}=-\int_{\mathbb{T}^{d}}(\Lambda^{s-1}\nabla\mu^{n+1})^{2}\ dx.
\end{equation}
Integrating this with respect to time, we find
\begin{equation}\nonumber
\int_{0}^{t}I_{\mu}\ d\tau = - \int_{0}^{t}\|\nabla\mu^{n+1}\|_{H^{s-1}}^{2}\ d\tau.
\end{equation}
For $II_{\mu},$ we again use Young's inequality, now with positive parameter $\sigma_{3}:$
\begin{equation}\nonumber
|II_{\mu}|\leq\frac{\sigma_{3}}{2}\|\mu^{n+1}\|_{H^{s-1}}^{2}+\frac{1}{2\sigma_{3}}\|\mu^{n}D_{p}\mathcal{H}\|_{H^{s}}^{2}
\leq\frac{\sigma_{3}}{2}\|\mu^{n+1}\|_{H^{s-1}}^{2}+\frac{c}{\sigma_{3}}\|\mu^{n}\|_{H^{s}}^{2}.
\end{equation}
We use the inequality
\begin{equation}\label{dealingWithGradient}
\|\mu^{n}\|_{H^{s}}^{2}\leq \|\mu^{n}\|_{H^{s-1}}^{2}+c\|\nabla\mu^{n}\|_{H^{s-1}}^{2},
\end{equation}
and we integrate in time, finding
\begin{equation}\nonumber
\int_{0}^{t}|II_{\mu}|\ d\tau\leq\frac{\sigma_{3}T}{2}\sup_{t\in[0,T]}\|\mu^{n+1}\|_{H^{s-1}}^{2}
+\frac{cT}{\sigma_{3}}\sup_{t\in[0,T]}\|\mu^{n}\|_{H^{s-1}}^{2}
+\frac{c}{\sigma_{3}}\int_{0}^{t}\|\nabla\mu^{n}\|_{H^{s-1}}^{2}\ d\tau.
\end{equation}
We treat $III_{\mu}$ similarly:
\begin{equation}\nonumber
|III_{\mu}|\leq\frac{\sigma_{4}}{2}\|\mu^{n+1}\|_{H^{s-1}}^{2}
+\frac{1}{2\sigma_{4}}\|mD_{pp}^{2}\mathcal{H}\cdot\nabla v^{n}\|_{H^{s}}^{2}
\leq\frac{\sigma_{4}}{2}\|\mu^{n+1}\|_{H^{s-1}}^{2}+\frac{c}{\sigma_{4}}\|\nabla v^{n}\|_{H^{s}}^{2}.
\end{equation}
Integrated in time, this becomes
\begin{equation}\nonumber
\int_{0}^{t}|III_{\mu}|\ d\tau\leq\frac{\sigma_{4}T}{2}\sup_{t\in[0,T]}\|\mu^{n+1}\|_{H^{s-1}}^{2}
+\frac{c}{\sigma_{4}}\int_{0}^{t}\|\nabla v^{n}\|_{H^{s}}^{2}\ d\tau.
\end{equation}
Finally, we work with $IV_{\mu}$ along similar lines,
\begin{equation}\nonumber
|IV_{\mu}|\leq\frac{\sigma_{5}}{2}\|\mu^{n+1}\|_{H^{s-1}}^{2}+\frac{1}{2\sigma_{5}}\|mD_p\partial_q\mathcal{H}\mu^{n}\|_{H^{s}}^{2}
\leq\frac{\sigma_{5}}{2}\|\mu^{n+1}\|_{H^{s-1}}^{2}+\frac{c}{\sigma_{5}}\|\mu^{n}\|_{H^{s}}^{2}.
\end{equation}
We again use \eqref{dealingWithGradient}, and integrate in time to find
\begin{equation}\nonumber
\int_{0}^{t}|IV_{\mu}|\ d\tau\leq\frac{\sigma_{5}T}{2}\sup_{t\in[0,T]}\|\mu^{n+1}\|_{H^{s-1}}^{2}
+\frac{cT}{\sigma_{5}}\sup_{t\in[0,T]}\|\mu^{n}\|_{H^{s-1}}^{2}
+\frac{c}{\sigma_{5}}\int_{0}^{t}\|\nabla \mu^{n}\|_{H^{s-1}}^{2}\ d\tau.
\end{equation}

We summarize what we have concluded so far, and we use \eqref{payoffAssumption}:
\begin{multline}\label{almostVBound}
\|v^{n+1}(t,\cdot)\|_{H^{s}}^{2}+\int_{t}^{T}\|\nabla v^{n+1}\|_{H^{s}}^{2}\ d\tau
\leq \kappa\sup_{t\in[0,T]}\|\mu^{n+1}\|_{H^{s-1}}^{2}
+(\sigma_{1}+\sigma_{2})T\sup_{t\in[0,T]}\|v^{n+1}\|_{H^{s}}^{2}
\\
+\frac{cT}{\sigma_{2}}\sup_{t\in[0,T]}\|\mu^{n}\|_{H^{s-1}}^{2}
+\frac{c}{\sigma_{1}}\int_{0}^{T}\|\nabla v^{n}\|_{H^{s}}^{2}\ d\tau
+\frac{c}{\sigma_{2}}\int_{0}^{T}\|\nabla\mu^{n}\|_{H^{s-1}}^{2}\ d\tau,
\end{multline}
\begin{multline}\label{almostMuBound}
\|\mu^{n+1}(t,\cdot)\|_{H^{s-1}}^{2}+\int_{0}^{t}\|\nabla\mu^{n+1}\|_{H^{s-1}}^{2}\ d\tau
\leq\|\mu_{0}\|_{H^{s-1}}^{2}+(\sigma_{3}+\sigma_{4}+\sigma_{5})T\sup_{t\in[0,T]}\|\mu^{n+1}\|_{H^{s-1}}^{2}
\\
+cT\left(\frac{1}{\sigma_{3}}+\frac{1}{\sigma_{5}}\right)\sup_{t\in[0,T]}\|\mu^{n}\|_{H^{s-1}}^{2}
+c\left(\frac{1}{\sigma_{3}}+\frac{1}{\sigma_{5}}\right)\int_{0}^{T}\|\nabla\mu^{n}\|_{H^{s-1}}^{2}\ d\tau
+\frac{c}{\sigma_{4}}\int_{0}^{T}\|\nabla v^{n}\|_{H^{s}}^{2}\ d\tau.
\end{multline}
We take $\sigma_{1}=\sigma_{2}=\frac{1}{8T},$ and we take $\sigma_{3}=\sigma_{4}=\sigma_{5}=\frac{1}{12T}.$
We multiply \eqref{almostVBound} by $\frac{1}{8\kappa},$ and we add the result to \eqref{almostMuBound}, taking into 
account these choices of our parameters.  Neglecting the integral terms on the left-hand sides of \eqref{almostVBound} 
and \eqref{almostMuBound}, we are able to take the supremum in time.  The result after some slight rearranging is
\begin{multline}\label{reallyAlmostVBound}
\frac{3}{32\kappa}\sup_{t\in[0,T]}\|v^{n+1}\|_{H^{s}}^{2}+\frac{5}{8}\sup_{t\in[0,T]}\|\mu^{n+1}\|_{H^{s-1}}^{2}
\\
\leq \|\mu_{0}\|_{H^{s-1}}^{2}
+cT^{2}\sup_{t\in[0,T]}\|\mu^{n}\|_{H^{s-1}}^{2}
+cT\int_{0}^{T}\|\nabla v^{n}\|_{H^{s}}^{2}+\|\nabla\mu^{n}\|_{H^{s-1}}^{2}\ d\tau.
\end{multline}
If we instead neglect the other terms on the left-hand side of \eqref{almostVBound} and \eqref{almostMuBound},
we can instead find that
\begin{multline}\label{reallyAlmostMuBound}
\frac{1}{8\kappa}\int_{0}^{T}\|\nabla v^{n+1}\|_{H^{s}}^{2}\ d\tau
+\int_{0}^{T}\|\nabla\mu^{n+1}\|_{H^{s-1}}^{2}\ d\tau
\leq \frac{1}{32\kappa}\sup_{t\in[0,T]}\|v^{n+1}\|_{H^{s}}^{2}
\\
+\frac{3}{8}\sup_{t\in[0,T]}\|\mu^{n+1}\|_{H^{s-1}}^{2}+
\|\mu_{0}\|_{H^{s-1}}^{2}
+cT^{2}\sup_{t\in[0,T]}\|\mu^{n}\|_{H^{s-1}}^{2}
+cT\int_{0}^{T}\|\nabla v^{n}\|_{H^{s}}^{2}+\|\nabla\mu^{n}\|_{H^{s-1}}^{2}\ d\tau.
\end{multline}
Adding \eqref{reallyAlmostVBound} and \eqref{reallyAlmostMuBound}, and slightly rearranging, we conclude
\begin{multline}\label{readyForInduction}
\frac{1}{16\kappa}\sup_{t\in[0,T]}\|v^{n+1}\|_{H^{s}}^{2}+\frac{1}{4}\sup_{t\in[0,T]}\|\mu^{n+1}\|_{H^{s-1}}^{2}
+\frac{1}{8\kappa}\int_{0}^{T}\|\nabla v^{n+1}\|_{H^{s}}^{2}\ d\tau + \int_{0}^{T}\|\nabla\mu^{n+1}\|_{H^{s-1}}^{2}\ d\tau
\\
\leq 2\|\mu_{0}\|_{H^{s-1}}^{2}+cT^{2}\sup_{t\in[0,T]}\|\mu^{n}\|_{H^{s-1}}^{2}
+cT\int_{0}^{T}\|\nabla v^{n}\|_{H^{s}}^{2}+\|\nabla\mu^{n}\|_{H^{s-1}}^{2}\ d\tau.
\end{multline}

We can now perform our induction.  We wish to prove that for all $n,$ we have
\begin{equation}\label{inductiveHypothesis}
\frac{1}{16\kappa}\sup_{t\in[0,T]}\|v^{n}\|_{H^{s}}^{2}+\frac{1}{4}\sup_{t\in[0,T]}\|\mu^{n}\|_{H^{s-1}}^{2}
+\frac{1}{8\kappa}\int_{0}^{T}\|\nabla v^{n}\|_{H^{s}}^{2}\ d\tau + \int_{0}^{T}\|\nabla\mu^{n}\|_{H^{s-1}}^{2}\ d\tau
\leq 3\|\mu_{0}\|_{H^{s-1}}^{2}.
\end{equation}
Clearly \eqref{inductiveHypothesis} holds when $n=0$ since $v^{0}=\mu^{0}=0.$  We assume \eqref{inductiveHypothesis}
holds for some value of $n.$  Then from \eqref{readyForInduction}, we see that for $T$ sufficiently small, 
\eqref{inductiveHypothesis} holds for the value $n+1.$  This restriction on the size of $T$ is independent of $n.$
We therefore have established \eqref{inductiveHypothesis} for all values of $n,$ and this is a uniform bound
on our iterates.

We now pass to the limit.  Recall that our solutions depend on two parameters, since there is a mollification parameter
$\varepsilon$ which we have suppressed, in addition to the iteration parameter $n.$  
Note that the above bound \eqref{inductiveHypothesis} which is
uniform with respect to $n$ is also uniform with respect to $\varepsilon.$
We will now take $\varepsilon=\frac{1}{n}$ and simply pass to the limit as $n\rightarrow\infty.$  
Our sequence $(v^{n},\mu^{n})$ is uniformly bounded in $L^\infty([0,T];H^{s}(\T^d))\times L^\infty([0,T];H^{s-1}(\T^d)),$ with $s>3+\frac{d}{2}.$  
By Sobolev imbedding this implies that $\|\nabla v^{n}\|_{L^{\infty}}$, $\|\nabla\mu^{n}\|_{L^{\infty}}$ and $\|\Delta v^{n}\|_{L^{\infty}}$, $\|\Delta\mu^{n}\|_{L^{\infty}}$ are also uniformly bounded.  Inspection of the iterated equations \eqref{iteratedV} and \eqref{iteratedMu} together with
the uniform bound and Sobolev embedding further implies that $\|\partial_{t}v^{n}\|_{L^{\infty}}$
and $\|\partial_{t}\mu^{n}\|_{L^{\infty}}$ are uniformly bounded.  We conclude that $v^{n}$ forms a bounded
equicontinuous family, as does $\mu^{n}.$  By the Arzel\`a-Ascoli theorem, there is a uniformly convergent
subsequence, which we do not relabel.  We thus have a limit, $(v,\mu).$

Our sequence $(v^{n},\mu^{n})$ converges to $(v,\mu)$ in $(C([0,T]\times\mathbb{T}^{d}))^{2},$ and thus
also in $C([0,T];L^{2}(\mathbb{T}^{d})\times L^{2}(\mathbb{T}^{d})).$  
With the uniform bound \eqref{inductiveHypothesis} and the
Sobolev interpolation inequality \eqref{sobolevInterpolation}, we may then conclude that the convergence
actually holds in $C([0,T];H^{s'}(\mathbb{T}^{d})\times H^{s'-1}(\mathbb{T}^{d})),$ for any $s'<s.$

Now we may conclude that $(v,\mu)$ satisfies the appropriate system, which is \eqref{eq:linearized}.
Integrating \eqref{iteratedV} with respect to time, we have
\begin{equation}\label{iteratedVIntegratedInTime}
v^{n+1}(t,\cdot)=\mathcal{J}_{1/n}\left(\frac{\delta G}{\delta m}\mu^{n}(T,\cdot)\right)
+\int_{t}^{T}\Big(\Delta v^{n+1} -\mathcal{J}_{1/n}(D_{p}\mathcal{H}\cdot\nabla v^{n})
-\mathcal{J}_{1/n}(\partial_q\mathcal{H}\mu^{n})\Big)\ d\tau.
\end{equation}
(Recall that we have set $\varepsilon=1/n.$)  We have established uniform convergence in sufficiently regular
spaces to be able to take the limit in every term in \eqref{iteratedVIntegratedInTime}.  We therefore have
\begin{equation}\nonumber
v(t,\cdot)=\frac{\delta G}{\delta m}\mu(T,\cdot)+\int_{t}^{T}\Big(\Delta v -D_{p}\mathcal{H}\cdot\nabla v -\partial_q\mathcal{H}\mu\Big)\ d\tau.
\end{equation}
We can treat $\mu$ analagously; we conclude that \eqref{eq:linearized} is satisfied by $(v,\mu).$

Finally, we note that since the unit ball of a Hilbert space is weakly compact, the sequences $v^{n}(t,\cdot)$ and
$\mu^{n}(t,\cdot)$ have weak limits in $H^{s}(\T^d)\times H^{s-1}(\T^d)$ at every time.  This limit must be the same as the
limit previously found, therefore we may conclude 
$(v,\mu)\in L^{\infty}([0,T];H^{s}(\mathbb{T}^{d})\times H^{s-1}(\mathbb{T}^{d}))$ as well. 
\end{proof}

\begin{remark}
(1) By optimizing the time horizon in \eqref{reallyAlmostMuBound} and \eqref{reallyAlmostVBound} in the above proof, we find that 
\begin{align*}
T_*\ge c_0\frac{1}{c\kappa},
\end{align*}
where $c_0>0$ is a universal constant, $\kappa$ is constant appearing in \eqref{payoffAssumption}, while 
\small
$$c=c\left(\|D_p\cH(t,\cdot,m,\nabla u)\|_{H^s},\|\partial_q\cH(t,\cdot,m,\nabla u)\|_{H^s},\|mD^2_{pp}\cH(t,\cdot,m,\nabla u)\|_{H^s},\|mD_p\partial_q\cH(t,\cdot,m,\nabla u)\|_{H^s}\right).$$
\normalsize

(2) In the same time, we have also that  $\|v\|_{L^\infty(H^s)}\le \tilde c_0\|\mu_0\|_{H^{s-1}},$ for some universal constant $\tilde c_0>0$.
\end{remark}

\begin{corollary}\label{cor:v_regular}
Under the assumption that $ s>\left\lceil\frac{d+5}{2}\right\rceil$, by Sobolev embedding theorem, the previous theorem yields $v\in C([0,T]; C^{2,\alpha}(\T^d))$ for some $\a\in[0,1)$.
\end{corollary}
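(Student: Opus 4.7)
The plan is to combine Theorem \ref{step1Theorem} with the standard Sobolev embedding $H^{s'}(\T^d) \hookrightarrow C^{2,\alpha}(\T^d)$, which holds whenever $s' > 2+\alpha+d/2$. Since the hypothesis $s > \left\lceil(d+5)/2\right\rceil$ implies $s > 2 + d/2 + 1/2$, there is a strictly positive margin, so one can fix some $s' < s$ and $\alpha \in [0,1)$ simultaneously satisfying $s' > 2+\alpha+d/2$; for such a choice, the embedding is a bounded linear (hence continuous) map from $H^{s'}(\T^d)$ into $C^{2,\alpha}(\T^d)$.

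The only thing to verify is therefore the temporal continuity at the intermediate level $H^{s'}$. This, however, is already contained in the very last part of the proof of Theorem \ref{step1Theorem}: the iterates $v^n$ are uniformly bounded in $L^\infty([0,T]; H^s(\T^d))$ and uniformly equicontinuous in $(t,x)$ (via uniform bounds on $\partial_t v^n$ and spatial derivatives coming from Sobolev embedding applied to the iterated equation \eqref{iteratedV}), so Arzelà-Ascoli yields uniform convergence to $v$ in $C([0,T]\times\T^d)$, and in particular in $C([0,T]; L^2(\T^d))$. Applying the Sobolev interpolation inequality \eqref{sobolevInterpolation} with $\beta = s$ and $\alpha = s'$ to the differences $v(t,\cdot) - v(\tau,\cdot)$ (which are uniformly bounded in $H^s$ by $2\|v\|_{L^\infty(H^s)}$) upgrades $L^2$-continuity in time to $H^{s'}$-continuity in time.

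Concretely, I would fix $s' \in (2 + \alpha + d/2,\, s)$ for a suitable $\alpha \in [0,1)$ and write, for $t,\tau \in [0,T]$,
\begin{equation}\nonumber
\|v(t,\cdot) - v(\tau,\cdot)\|_{H^{s'}} \leq c\, \|v(t,\cdot) - v(\tau,\cdot)\|_{L^2}^{1-s'/s} \, \|v(t,\cdot) - v(\tau,\cdot)\|_{H^s}^{s'/s},
\end{equation}
with the second factor uniformly bounded; since the first factor goes to $0$ as $|t-\tau| \to 0$ by the continuity in $L^2$, this gives $v \in C([0,T]; H^{s'}(\T^d))$. Composition with the continuous embedding $H^{s'}(\T^d) \hookrightarrow C^{2,\alpha}(\T^d)$ then yields $v \in C([0,T]; C^{2,\alpha}(\T^d))$, as claimed.

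There is no real obstacle here: the only mildly delicate point is choosing $s'$ and $\alpha$ compatibly with the numerical condition $s > \lceil (d+5)/2 \rceil$, but the slack in the ceiling guarantees this is possible, and the remainder of the argument is a routine combination of interpolation and Sobolev embedding already invoked at the end of the proof of Theorem \ref{step1Theorem}.
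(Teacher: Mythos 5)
Your argument is correct and coincides with the route the paper itself takes: the end of the proof of Theorem~\ref{step1Theorem} already establishes, via uniform $H^s$ bounds plus $C([0,T];L^2)$ convergence and the interpolation inequality \eqref{sobolevInterpolation}, that $v\in C([0,T];H^{s'}(\T^d))$ for every $s'<s$, and the corollary is then precisely the composition with the embedding $H^{s'}(\T^d)\hookrightarrow C^{2,\alpha}(\T^d)$ valid for $s'>2+\alpha+d/2$, which the slack in $s>\lceil(d+5)/2\rceil$ makes possible for some $\alpha\in[0,1)$. No gap; this is the intended argument.
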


\section{More estimates on the solution of the linearized system}\label{sec:step2}
%Started by David on 2/1/2019

To get the desired regularity on the kernel $K$ with respect to the $y$ variable, we will use a Riesz type representation
theorem.  To conclude that $K$ is in $H^{s}$ we will use the following theorem, which shows that
$v$ is related to a continuous linear operator on $H^{-s}.$

\begin{theorem}\label{thm:v_H-s_bound}
%Under the assumptions and notations of Theorem \ref{thm:v_representation}, 
Let $v$ and $\mu_{0}$ be as in Theorem \ref{step1Theorem}. There exists $T_{**}>0$ with $T_{**}\le T_*$ and $c>0$ such that if $T\in(0,T_{**}),$ then
for $s$ satisfying \eqref{hyp:s} 
\begin{align}\label{estim:thm-H-s}
\sup_{t\in[0,T]}\|v(t,\cdot)\|_{H^{-s}}\leq c\|\mu_{0}\|_{H^{-s-1}}.
\end{align}
\end{theorem}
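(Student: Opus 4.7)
The plan is to mirror the energy-method proof of Theorem \ref{step1Theorem}, but carried out at negative regularity: work with $v$ in $H^{-s}(\T^d)$ and $\mu$ in $H^{-s-1}(\T^d)$. First I would apply $\Lambda^{-s}$ to the $v$-equation and pair with $\Lambda^{-s}v$ in $L^2$, and apply $\Lambda^{-s-1}$ to the $\mu$-equation and pair with $\Lambda^{-s-1}\mu$, to obtain the identities
\begin{equation}\nonumber
\|v(t)\|_{H^{-s}}^{2}+2\int_{t}^{T}\|\Lambda^{-s}\nabla v\|_{L^{2}}^{2}\,d\tau
=\|v(T)\|_{H^{-s}}^{2}-2\int_{t}^{T}\!\!\int_{\T^d}\Lambda^{-2s}v\bigl(D_{p}\cH\cdot\nabla v+\partial_{q}\cH\,\mu\bigr)\,dx\,d\tau,
\end{equation}
\begin{equation}\nonumber
\|\mu(t)\|_{H^{-s-1}}^{2}+2\int_{0}^{t}\|\Lambda^{-s-1}\nabla\mu\|_{L^{2}}^{2}\,d\tau
=\|\mu_{0}\|_{H^{-s-1}}^{2}-2\int_{0}^{t}\!\!\int_{\T^d}\Lambda^{-2(s+1)}\nabla\mu\cdot F\,dx\,d\tau,
\end{equation}
where $F=\mu D_{p}\cH+mD_{pp}^{2}\cH\nabla v-mD_{p}\partial_{q}\cH\,\mu$.

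Next I would bound the right-hand sides by duality and the product estimate \eqref{estim:H-s}. The key algebraic observation is the Fourier identity
\begin{equation}\nonumber
\|f\|_{H^{-s+1}}^{2}=\|f\|_{H^{-s}}^{2}+\|\Lambda^{-s}\nabla f\|_{L^{2}}^{2},\qquad
\|g\|_{H^{-s}}^{2}=\|g\|_{H^{-s-1}}^{2}+\|\Lambda^{-s-1}\nabla g\|_{L^{2}}^{2},
\end{equation}
which is an instance of \eqref{explicitEquivalence} at shifted indices. This lets me split the first-order term in the $v$-equation as $D_{p}\cH\cdot\nabla v=\diver(vD_{p}\cH)-v\diver D_{p}\cH$, estimate $\|\diver(vD_{p}\cH)\|_{H^{-s}}\le\|vD_{p}\cH\|_{H^{-s+1}}\le c\|v\|_{H^{-s+1}}\|D_p\cH\|_{H^s}$, and thereby produce a $\|\Lambda^{-s}\nabla v\|_{L^{2}}^{2}$ contribution absorbable into the $v$-dissipation. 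For the coupling $\partial_{q}\cH\,\mu$, I would use $\|\partial_{q}\cH\,\mu\|_{H^{-s}}\le c\|\mu\|_{H^{-s}}$, so that the $\|\Lambda^{-s-1}\nabla\mu\|_{L^{2}}^{2}$ piece is absorbable into the $\mu$-dissipation (after scaling the two estimates appropriately), leaving only $\|\mu\|_{H^{-s-1}}^{2}$ at top order. The $\mu$-side is cleaner because the divergence structure automatically transfers a derivative onto $\Lambda^{-2(s+1)}\mu$, producing $\|\Lambda^{-s-1}\nabla\mu\|_{L^2}$, while $\|F\|_{H^{-s-1}}\le c(\|v\|_{H^{-s}}+\|\mu\|_{H^{-s-1}})$ follows from \eqref{estim:H-s} (using $\|\nabla v\|_{H^{-s-1}}\le\|v\|_{H^{-s}}$ for the $mD_{pp}^{2}\cH\nabla v$ term) and from the a priori $H^{s+1}$-regularity of the coefficients coming from Theorem \ref{thm:david_old} and \eqref{hyp:H1_original}.

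For the terminal datum, I would deduce $\|v(T)\|_{H^{-s}}\le c\|\mu(T)\|_{H^{-s-1}}$ by a duality argument from \eqref{payoffAssumption}: viewing $\mu\mapsto\frac{\delta G}{\delta m}(\cdot,m_{T})\mu$ as the integral operator with kernel $K(x,y)=\frac{\delta G}{\delta m}(x,m_{T})(y)$, applying \eqref{payoffAssumption} with $s$ replaced by $s+1$ (admissible since \eqref{hyp:s} is a lower bound) to the transposed kernel and dualizing converts the $H^{s}\to H^{s+1}$ mapping into the desired $H^{-s-1}\to H^{-s}$ bound. Finally, combining the two energy inequalities (weighting the $\mu$-estimate by a large constant so the stray $\|\Lambda^{-s-1}\nabla\mu\|_{L^2}^2$ term from the $v$-side is absorbed) and proceeding as in the derivation of \eqref{reallyAlmostVBound}--\eqref{readyForInduction}, I would choose $T_{**}\le T_{*}$ small enough so that the $T$-dependent terms on the right are absorbed into the left, yielding $\sup_{t}\|v(t)\|_{H^{-s}}^{2}+\sup_{t}\|\mu(t)\|_{H^{-s-1}}^{2}\le c\|\mu_{0}\|_{H^{-s-1}}^{2}$.

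The main obstacle I anticipate is the terminal-condition step: transferring \eqref{payoffAssumption} to a bound on $\frac{\delta G}{\delta m}$ as an operator $H^{-s-1}\to H^{-s}$ genuinely requires a regularity statement on the kernel in both variables, and one must argue carefully (via transposition plus \eqref{payoffAssumption} at a slightly shifted index, or a direct smoothing property of the model example $G(x,m)=[W*g(m)](x)$) that this dual mapping is at our disposal. A secondary technical nuisance is bookkeeping the exact number of derivatives of $D_p\cH$, $\partial_q\cH$, $D_{pp}^2\cH$, $D_p\partial_q\cH$ needed in the negative-index product estimates; this is why \eqref{hyp:s} is chosen so liberally, and why the gap of one derivative between the regularities of $v$ and $\mu$ (as in Theorem \ref{step1Theorem}) must be preserved at the negative end as well.
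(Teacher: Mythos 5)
Your proposal takes essentially the same route as the paper's proof: the paper defines $E_v=\tfrac12\|v\|_{H^{-s}}^2$, $E_\mu=\tfrac12\|\mu\|_{H^{-s-1}}^2$, takes time derivatives, integrates by parts the Laplacian terms to extract parabolic dissipation $\|\Lambda^{-s}\nabla v\|_{L^2}^2$ and $\|\Lambda^{-s-1}\nabla\mu\|_{L^2}^2$, bounds all lower-order terms via the negative-index product estimate \eqref{estim:H-s} together with the identity \eqref{notQuiteEquivalent} (your shifted-index Fourier identity), applies \eqref{payoffAssumption} to the terminal datum, and then chooses Young parameters and $T$ small exactly as in the derivation of \eqref{reallyAlmostVBound}--\eqref{readyForInduction}. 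The only cosmetic difference is that the paper estimates $\|D_p\cH\cdot\nabla v\|_{H^{-s}}\le c\|D_p\cH\|_{H^s}\|\nabla v\|_{H^{-s}}$ directly rather than through your Leibniz split $\diver(vD_p\cH)-v\diver D_p\cH$; both work and lead to the same bookkeeping.

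One observation of yours is worth underlining: you correctly flag the terminal condition $\|v(T)\|_{H^{-s}}\le c\|\mu(T)\|_{H^{-s-1}}$ as the delicate step. The paper at this point simply invokes \eqref{payoffAssumption} and writes $E_v(T)\le\frac{\kappa}{2}\|\mu(T,\cdot)\|_{H^{-s-1}}^2$ without comment, which tacitly uses the assumption in a negative-index form; the hypothesis as stated is an $H^{s-1}\to H^s$ bound, and converting it to $H^{-s-1}\to H^{-s}$ requires precisely the transposition-plus-kernel-regularity argument you sketch (which does go through for the model example $G=W*g(m)$, since the transposed operator convolves against the same smooth $W$). So your proposal not only matches the paper's proof but also makes explicit a step the paper leaves terse.
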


\begin{proof}
We define an energy, beginning with the piece
\begin{equation}\nonumber
E_{v}=\frac{1}{2}\|v\|_{H^{-s}}^{2}=\int_{\mathbb{T}^{d}}\left(\Lambda^{-s}v\right)^{2}\ dx.
\end{equation}
We take the time derivative of $E_{v},$ finding
\begin{equation}\nonumber
\frac{dE_{v}}{dt}=\int_{\mathbb{T}^{d}}\left(\Lambda^{-s}v\right)\left(\Lambda^{-s}\partial_{t}v\right)\ dx.
\end{equation}
We now substitute from the $\partial_{t}v$ equation in \eqref{eq:linearized}, finding
\begin{align*}\nonumber
\frac{dE_{v}}{dt}=&-\int_{\mathbb{T}^{d}}\left(\Lambda^{-s}v\right)\left(\Lambda^{-s}\Delta v\right) dx
+\int_{\mathbb{T}^{d}}\left(\Lambda^{-s}v\right)\left(\Lambda^{-s}(D_{p}\mathcal{H}\cdot\nabla v)\right) dx\\
+&\int_{\mathbb{T}^{d}}\left(\Lambda^{-s}v\right)\left(\Lambda^{-s}(\partial_q\mathcal{H}\mu)\right) dx
=I_{v}+II_{v}+III_{v},
\end{align*}
where we have suppressed the arguments in $\mathcal{H}.$  
We then integrate in time from time $t$ to time $T,$ finding 
\begin{equation}\nonumber
E_{v}(t)=E_{v}(T)-\int_{t}^{T}I_{v}\ d\tau - \int_{t}^{T}II_{v}\ d\tau - \int_{t}^{T}III_{v}\ d\tau.
\end{equation}

We integrate by parts with respect to the spatial variables in the term involving $I_{v}:$
\begin{equation}\label{eq:july04.2020.1}
-\int_{t}^{T}I_{v}\ d\tau=-\int_{t}^{T}\int_{\mathbb{T}^{d}}(\Lambda^{-s}\nabla v)^{2}\ dxd\tau
=-\int_{t}^{T}\|\Lambda^{-s}\nabla v\|_{L^{2}}^{2}\ d\tau.
\end{equation}
This term gives us a parabolic gain of regularity and will be useful as we continue our estimates.

For the term involving $II_{v},$ we begin by estimating with Cauchy-Schwarz, and we recall from \eqref{-1norm} that
for any $\phi,$ we have $\|\phi\|_{H^{-s}}=\|\Lambda^{-s}\phi\|_{L^{2}}:$
\begin{equation}\nonumber
\left|II_{v}\right|\leq\|v\|_{H^{-s}}\|D_{p}\mathcal{H}\cdot\nabla v\|_{H^{-s}}.
\end{equation}
We then use \eqref{estim:H-s}, finding
\begin{equation}\nonumber
\left|II_{v}\right|\leq c\|v\|_{H^{-s}}\|D_{p}\mathcal{H}\|_{H^{s}}\|\nabla v\|_{H^{-s}}.
\end{equation}
We use Young's inequality with positive parameter $\sigma_{1}:$
\begin{equation}\nonumber
\left|II_{v}\right|\leq \frac{c}{\sigma_{1}}\|v\|_{H^{-s}}^{2}\|D_{p}\mathcal{H}\|_{H^{s}}^{2}
+\frac{\sigma_{1}}{2}\|\Lambda^{-s}\nabla v\|_{L^{2}}^{2}.
\end{equation}
We then integrate in time:
\begin{equation}\label{eq:july04.2020.2}
-\int_{t}^{T}II_{v}\ d\tau\leq \frac{c}{\sigma_{1}}\int_{t}^{T}\|v\|_{H^{-s}}^{2}\|D_{p}\mathcal{H}\|_{H^{s}}^{2} d\tau
+\frac{\sigma_{1}}{2}\int_{t}^{T}\|\Lambda^{-s}\nabla v\|_{L^{2}}^{2} d\tau.
\end{equation}

We now turn to $III_{v},$ estimating similarly to before:
\begin{equation}\nonumber
|III_{v}|\leq \|v\|_{H^{-s}}\|\partial_q\mathcal{H}\mu\|_{H^{-s}}\leq c\|v\|_{H^{-s}}\|\partial_q\mathcal{H}\|_{H^{s}}
\|\mu\|_{H^{-s}}.
\end{equation}
We next use Young's inequality with positive parameter $\sigma_{2}:$
\begin{equation}\nonumber
|III_{v}|\leq \frac{c}{\sigma_{2}}\|v\|_{H^{-s}}^{2}\|\partial_q\mathcal{H}\|_{H^{s}}^{2}+\frac{\sigma_{2}}{2}\|\mu\|_{H^{-s}}^{2}.
\end{equation}
We integrate this in time:
\begin{equation}\label{eq:july04.2020.3}
-\int_{t}^{T}III_{v}\ d\tau\leq\frac{c}{\sigma_{2}}\int_{t}^{T}\|v\|_{H^{-s}}^{2}\|\partial_q\mathcal{H}\|_{H^{s}}^{2} d\tau
+\frac{\sigma_{2}}{2}\int_{t}^{T}\|\mu\|_{H^{-s}}^{2} d\tau.
\end{equation}

We next define the other piece of the energy, $E_{\mu},$ which is
\begin{equation}\nonumber
E_{\mu}=\frac{1}{2}\|\mu\|_{H^{-s-1}}^{2}=\frac12\int_{\mathbb{T}^{d}}\left(\Lambda^{-s-1}\mu\right)^{2} dx.
\end{equation}
The time derivative of this is
\begin{equation}\nonumber
\frac{dE_{\mu}}{dt}=\int_{\mathbb{T}^{d}}\left(\Lambda^{-s-1}\mu\right)\left(\Lambda^{-s-1}\partial_{t}\mu\right) dx.
\end{equation}
We substitute from the $\partial_{t}\mu$ equation in \eqref{eq:linearized}, finding
\begin{multline}\nonumber
\frac{dE_{\mu}}{dt}=\int_{\mathbb{T}^{d}}\left(\Lambda^{-s-1}\mu\right)\left(\Lambda^{-s-1}\Delta\mu\right) dx
-\int_{\mathbb{T}^{d}}\left(\Lambda^{-s-1}\mu\right)\left(\Lambda^{-s-1}\nabla\cdot(\mu D_{p}\mathcal{H})\right) dx
\\
-\int_{\mathbb{T}^{d}}\left(\Lambda^{-s-1}\mu\right)\left(\Lambda^{-s-1}\nabla\cdot(m(D_{pp}^{2}\mathcal{H})\nabla v)\right) dx
-\int_{\mathbb{T}^{d}}\left(\Lambda^{-s-1}\mu\right)\left(\Lambda^{-s-1}\nabla\cdot(m(D_p\partial_q\mathcal{H})\mu)\right) dx
\\
=I_{\mu}+II_{\mu}+III_{\mu}+IV_{\mu}.
\end{multline}
We integrate in time from time zero until time $t,$
\begin{equation}\nonumber
E_{\mu}(t)=E_{\mu}(0)+\int_{0}^{t}I_{\mu}\ d\tau + \int_{0}^{t}II_{\mu}\ dt
+\int_{0}^{t}III_{\mu}\ d\tau + \int_{0}^{t}IV_{\mu}\ d\tau.
\end{equation}
We integrate the term involving $I_{\mu}$ by parts, to find
\begin{equation}\label{eq:july04.2020.4.13}
\int_{0}^{t}I_{\mu}\ d\tau=-\int_{0}^{t}\int_{\mathbb{T}^{d}}\left(\Lambda^{-s-1}\nabla\mu\right)^{2}\ dxd\tau.
\end{equation}
This is a parabolic term which will be helpful to us, as the term $I_{v}$ was as well.

We next note that for any $f\in H^{-s},$ we have $\|\Lambda^{-s-1}\nabla f\|_{L^{2}}\leq c\|f\|_{H^{-s}}.$
The $H^{-s}$-norm of $f,$ though, is not equivalent to the $L^{2}$-norm of $\Lambda^{-s-1}\nabla f,$ because of the $k=0$
mode.  We instead will use the inequality
\begin{equation}\label{notQuiteEquivalent}
\|f\|_{H^{-s}}^{2}\leq c\|f\|_{H^{-s-1}}^{2}+c\|\Lambda^{-s-1}\nabla f\|_{L^{2}}^{2}.
\end{equation}

%In the arguments below, $k>0$ is chosen to be a large positive number. 
We begin to estimate the rest of the terms, starting with the $II_{\mu}$ term,
\begin{equation}\nonumber
|II_{\mu}|\leq c\|\Lambda^{-s-1}\mu\|_{L^{2}}\|D_{p}\mathcal{H}\|_{H^{s}}\|\mu\|_{H^{-s}}.
\end{equation}
We use Young's inequality with positive parameter $\sigma_{3},$
\begin{equation}\nonumber
|II_{\mu}|\leq \frac{c}{\sigma_{3}}\|D_{p}\mathcal{H}\|_{H^{s}}^{2}\|\mu\|_{H^{-s-1}}^{2}
+\frac{\sigma_{3}}{2}\|\mu\|_{H^{-s}}^{2}.
\end{equation}
We finally use \eqref{notQuiteEquivalent} with this, yielding
\begin{equation}\nonumber
|II_{\mu}|\leq \frac{c}{\sigma_{3}}\|D_{p}\mathcal{H}\|_{H^{s}}^{2}\|\mu\|_{H^{-s-1}}^{2}
+\frac{c\sigma_{3}}{2}\|\mu\|_{H^{-s-1}}^{2}+\frac{c\sigma_{3}}{2}\|\Lambda^{-s-1}\nabla\mu\|_{L^{2}}^{2}.
\end{equation}
We integrate in time, finding
\begin{equation}\label{eq:july04.2020.4.15}
\int_{0}^{t}|II_{\mu}| d\tau \leq 
\frac{c}{\sigma_{3}}\int_{0}^{t}\|D_{p}\mathcal{H}\|_{H^{s}}^{2}\|\mu\|_{H^{-s-1}}^{2}\ d\tau
+\frac{c\sigma_{3}}{2}\int_{0}^{t}\|\mu\|_{H^{-s-1}}^{2}\ d\tau
+\frac{c\sigma_{3}}{2}\int_{0}^{t}\|\Lambda^{-s-1}\nabla\mu\|_{L^{2}}^{2}\ d\tau.
\end{equation}

We next consider $III_{\mu}:$ 
\begin{equation}
|III_{\mu}|\leq c\|\Lambda^{-s-1}\mu\|_{L^{2}}\|mD_{pp}^{2}\mathcal{H}\|_{H^{s}}\|\nabla v\|_{H^{-s}}.
\end{equation}
We use Young's inequality with positive parameter $\sigma_{4}:$
\begin{equation}\nonumber
|III_{\mu}|\leq \frac{c}{\sigma_{4}}\|\Lambda^{-s-1}\mu\|_{L^{2}}^{2}\|mD_{pp}^{2}\mathcal{H}\|_{H^{s}}^{2}
+\frac{\sigma_{4}}{2}\|\Lambda^{-s}\nabla v\|_{L^{2}}^{2}.
\end{equation}
Integrated in time, this becomes
\begin{equation}\label{eq:july04.2020.4.17}
\int_{0}^{t}|III_{\mu}| d\tau \leq 
\frac{c}{\sigma_{4}}\int_{0}^{t}\|\Lambda^{-s-1}\mu\|_{L^{2}}^{2}\|mD_{pp}^{2}\mathcal{H}\|_{H^{s}}^{2}\ d\tau
+\frac{\sigma_{4}}{2}\int_{0}^{t}\|\Lambda^{-s}\nabla v\|_{L^{2}}^{2}\ d\tau.
\end{equation}

Our final term to estimate is $IV_{\mu},$ and we use Cauchy-Schwarz to bound this as
\begin{equation}\nonumber
|IV_{\mu}|\leq c\|\Lambda^{-s-1}\mu\|_{L^{2}}\|mD_p\partial_q\mathcal{H}\|_{H^{s}}\|\mu\|_{H^{-s}}.
\end{equation}
We use Young's inequality with positive parameter $\sigma_{5}:$
\begin{equation}\nonumber
|IV_{\mu}|\leq \frac{c}{\sigma_{5}}\|\Lambda^{-s-1}\mu\|_{L^{2}}^{2}\|mD_p\partial_q\mathcal{H}\|_{H^{s}}^{2}
+\frac{\sigma_{5}}{2}\|\mu\|_{H^{-s}}^{2}.
\end{equation}
Next we use \eqref{notQuiteEquivalent}, yielding
\begin{equation}\nonumber
|IV_{\mu}|\leq \frac{c}{\sigma_{5}}\|\Lambda^{-s-1}\mu\|_{L^{2}}^{2}\|mD_p\partial_q\mathcal{H}\|_{H^{s}}^{2}
+\frac{c\sigma_{5}}{2}\|\mu\|_{H^{-s-1}}^{2}+\frac{c\sigma_{5}}{2}\|\Lambda^{-s-1}\nabla\mu\|_{L^{2}}^{2}.
\end{equation}
Integrated in time, this is
\begin{align}
& \int_{0}^{t} |IV_{\mu}|\ d\tau \nonumber \\
\leq&  \frac{c}{\sigma_{5}}\int_{0}^{t}\|\Lambda^{-s-1}\mu\|_{L^{2}}^{2}\|mD_p\partial_q\mathcal{H}\|_{H^{s}}^{2}\ d\tau
+\frac{c\sigma_{5}}{2} \Big(\int_{0}^{t}\|\mu\|_{H^{-s-1}}^{2}\ d\tau+ \int_{0}^{t}\|\Lambda^{-s-1}\nabla\mu\|_{L^{2}}^{2}\ d\tau\Big). \label{eq:july04.2020.4.18}
\end{align}

We choose our parameters from Young's inequality as follows:
\begin{equation}\nonumber
\frac{\sigma_{1}}{2}=\frac{1}{8},\qquad \frac{\sigma_{2}}{2}=\frac{1}{12},\qquad \frac{c\sigma_{3}}{2}=\frac{1}{12},
\qquad \frac{\sigma_{4}}{2}=\frac{1}{8\kappa},\qquad \frac{c\sigma_{5}}{2}=\frac{1}{12}.
\end{equation}

Then, our findings from our energy estimates can be summarized in the following inequalities.  First by \eqref{eq:july04.2020.1}-\eqref{eq:july04.2020.3} we have
\begin{align*}
& E_{v}(t)+ \frac{7}{8}\int_{t}^{T}\|\Lambda^{-s}\nabla v\|_{L^{2}}^{2}\ d\tau \\
\leq & E_{v}(T)+4c \int_{t}^{T}\|v\|_{H^{-s}}^{2}\|D_{p}\mathcal{H}\|_{H^{s}}^{2} d\tau +6c \int_{t}^{T}\|v\|_{H^{-s}}^{2}\|\partial_q\mathcal{H}\|_{H^{s}}^{2} d\tau+ \frac{1}{12}\int_{0}^{T}\|\mu\|_{H^{-s}}^{2}\ d\tau
\end{align*}
Increasing the value of $c$ if necessary  we have 
\[
E_{v}(t)+ \frac{7}{8}\int_{t}^{T}\|\Lambda^{-s}\nabla v\|_{L^{2}}^{2}\ d\tau \leq E_{v}(T)
+c T\|v\|_{H^{-s}}^{2} + \frac{1}{12}\int_{0}^{T}\|\mu\|_{H^{-s}}^{2}\ d\tau
\]
We use \eqref{payoffAssumption} and the fact that $v(T,x)=D_q G(x,m_T)\mu_T$ to obtain
\begin{equation}\label{eq:july04.2020.8} 
E_{v}(t)+ \frac{7}{8}\int_{t}^{T}\|\Lambda^{-s}\nabla v\|_{L^{2}}^{2}\ d\tau \leq {\kappa\over 2}\|\mu(T,\cdot)\|_{H^{-s-1}}^{2}
+cT \sup_{t\in[0,T]}\|v\|_{H^{-s}}^{2} + \frac{1}{12}\int_{0}^{T}\|\mu\|_{H^{-s}}^{2}\ d\tau
\end{equation}
Since each one of the two terms at the left hand side of  \eqref{eq:july04.2020.8} is nonnegative we conclude that they are less than or equal to the  expression at the right hand side of  \eqref{eq:july04.2020.8}. Hence, 
\begin{align} 
&\max\Big\{{1\over 2}\sup_{t\in[0,T]}\|v\|_{H^{-s}}^{2},\; \frac{7}{8}\int_0^{T}\|\Lambda^{-s}\nabla v\|_{L^{2}}^{2}\ d\tau\Big\}  \nonumber\\
\leq & {\kappa\over 2} \|\mu(T,\cdot)\|_{H^{-s-1}}^{2}+cT\sup_{t\in[0,T]}\|v\|_{H^{-s}}^{2} + \frac{1}{12}\int_{0}^{T}\|\mu\|_{H^{-s-1}}^{2}\ d\tau 
+ \frac{1}{12}\int_{0}^{T}\|\nabla \mu\|_{H^{-s-1}}^{2}\ d\tau. \label{eq:july04.2020.9}
\end{align}

Arguing as above, we combine \eqref{eq:july04.2020.4.13} \eqref{eq:july04.2020.4.15}  \eqref{eq:july04.2020.4.17} \eqref{eq:july04.2020.4.18} to obtain 
\begin{equation}\nonumber
E_{\mu}(t)+\frac{5}{6}\int_{0}^{T}\|\Lambda^{-s-1}\nabla\mu\|_{L^{2}}^{2}\ d\tau\leq E_{\mu}(0)+cT\sup_{t\in[0,T]}\|\mu\|_{H^{-s-1}}^{2}
+\frac{1}{8 \kappa}\int_{0}^{T}\|\Lambda^{-s}\nabla v\|_{L^{2}}^{2}\ d\tau.
\end{equation}
Taking into account the fact that each one of the expressions at the left and side of the previous inequality are non negative, we conclude that 
%\[\max\bigg\{ \sup_{t\in[0,T]}\|\mu\|_{H^{-s-1}}^{2}, \frac{3}{4}\int_0^T\|\Lambda^{-s-1}\nabla\mu\|^2_{L^2}d\tau\bigg\} \leq 
%\|\mu_{0}\|_{H^{-s-1}}^{2}+cT\sup_{t\in[0,T]}\|\mu\|_{H^{-s-1}}^{2}+\frac{1}{8}\int_{0}^{T}\|\Lambda^{-s}\nabla v\|_{L^{2}}^{2}\ d\tau.\]
\begin{align}
&\max\bigg\{ {1\over 2} \sup_{t\in[0,T]}\|\mu\|_{H^{-s-1}}^{2},\;  \frac{5}{6}\int_0^T\|\Lambda^{-s-1}\nabla\mu\|^2_{L^2}d\tau\bigg\} \nonumber\\
&\leq  {1\over 2}\|\mu_{0}\|_{H^{-s-1}}^{2}+cT\sup_{t\in[0,T]}\|\mu\|_{H^{-s-1}}^{2}
+\frac{1}{8\kappa}\int_{0}^{T}\|\Lambda^{-s}\nabla v\|_{L^{2}}^{2}\ d\tau\label{eq:july04.2020.9ter}.
\end{align}
This gives an upper bound on $ \kappa \|\mu(T,\cdot)\|_{H^{-s-1}}^{2}$ which we can use in \eqref{eq:july04.2020.9} to obtain 
\begin{align} 
\max\Big\{{1\over 2}\sup_{t\in[0,T]}\|v\|_{H^{-s}}^{2},\; \frac{7}{8}\int_0^{T}\|\Lambda^{-s}\nabla v\|_{L^{2}}^{2}\ d\tau\Big\} 
\leq & {\kappa\over 2} \|\mu(0,\cdot)\|_{H^{-s-1}}^{2}+cT\sup_{t\in[0,T]}\|v\|_{H^{-s}}^{2} +\frac{1}{12}\int_{0}^{T}\|\mu\|_{H^{-s-1}}^{2}\ d\tau \nonumber\\
+ &  \frac{1}{12}\int_{0}^{T}\|\nabla \mu\|_{H^{-s-1}}^{2}\ d\tau+\frac{1}{8}\int_0^{T}\|\Lambda^{-s}\nabla v\|_{L^{2}}^{2}\ d\tau. \label{eq:july04.2020.9bis}
\end{align}
We combine \eqref{eq:july04.2020.9ter} and \eqref{eq:july04.2020.9bis} to conclude that 
\begin{align*} 
& {1\over 2} \sup_{t\in[0,T]}\|\mu\|_{H^{-s-1}}^{2}+ \frac{5}{6}\int_0^T\|\nabla\mu\|^2_{H^{-s-1}}d\tau+
{1\over 2}\sup_{t\in[0,T]}\|v\|_{H^{-s}}^{2}+ \frac{7}{8}\int_0^{T}\|\Lambda^{-s}\nabla v\|_{L^{2}}^{2}\ d\tau\\
\leq &   \|\mu_{0}\|_{H^{-s-1}}^{2}+cT\sup_{t\in[0,T]}\|\mu\|_{H^{-s-1}}^{2}
+\frac{1}{4\kappa}\int_{0}^{T}\|\Lambda^{-s}\nabla v\|_{L^{2}}^{2}\ d\tau\\
+ &\kappa \|\mu_0\|_{H^{-s-1}}^{2}+cT\sup_{t\in[0,T]}\|v\|_{H^{-s}}^{2} +\frac{1}{6}\int_{0}^{T}\|\mu\|_{H^{-s-1}}^{2}\ d\tau +   \frac{1}{6}\int_{0}^{T}\|\nabla \mu\|_{H^{-s-1}}^{2}\ d\tau+\frac{1}{4}\int_0^{T}\|\Lambda^{-s}\nabla v\|_{L^{2}}^{2}\ d\tau. 
\end{align*}
Thus, 
\begin{align*} 
& \bigg({1\over 3}-cT\bigg) \sup_{t\in[0,T]}\|\mu\|_{H^{-s-1}}^{2}+ \frac{2}{3}\int_0^T\|\nabla\mu\|^2_{H^{-s-1}}d\tau+
\bigg({1\over 2}-cT\bigg)\sup_{t\in[0,T]}\|v\|_{H^{-s}}^{2}+ \Big(\frac{5}{8}-{1\over 4\kappa}\Big)\int_0^{T}\|\nabla v\|_{H^{-s}}^{2}\ d\tau\\
\leq &  (\kappa+1) \|\mu_{0}\|_{H^{-s-1}}^{2}. 
\end{align*}
Since $\kappa\geq 1$ we complete the proof when $3cT<1.$  
\end{proof}

\begin{remark} (1) We remark that the constant $c$ in \eqref{estim:thm-H-s} depends on 
$$\|D_p\cH(t,\cdot,\nabla u,m)\|_{H^s}, \|\partial_q\cH(t,\cdot,\nabla u,m)\|_{H^s}, \|mD^2_{pp}\cH(t,\cdot,\nabla u,m)\|_{H^s}\ \ {\rm{and}}\ \ \|mD_p\partial_q\cH(t,\cdot,\nabla u,m)\|_{H^s}.$$

(2) The time horizon appearing in the previous theorem satisfies $T_{**}<\frac{1}{3c\kappa}.$ 

(3) in fact, we proved that  
 \[
\max\bigg\{\sup_{t\in[0,T]}\|v(t,\cdot)\|_{H^{-s}}; \sup_{t\in[0,T]}\|\mu\|_{H^{-s-1}}; \int_0^T\|\nabla\mu\|^2_{H^{-s-1}}d\tau; \int_0^{T}\|\nabla v\|_{H^{-s}}^{2}\ d\tau \bigg\}\leq c\|\mu_{0}\|_{H^{-s-1}}.
\]
which is a stronger conclusion than \eqref{estim:thm-H-s}.
\end{remark}

\begin{remark}
We remark that in the proof of Theorem \ref{estim:thm-H-s} we have only used that $\mu_0\in H^{-s-1}(\T^d)$ (i.e. we do not require for these estimates $\mu_0\in H^{s-1}(\T^d)$) and the regularity of solutions $(m,u)$ on the mean field game system \eqref{eq:mfg}. Therefore, we can conclude that in fact Theorem \ref{thm:v_H-s_bound} (combined with the techniques of the proof of Theorem \ref{step1Theorem}) provides an existence and uniqueness result for \eqref{eq:linearized} for initial data $\mu_0\in H^{-s-1}(\T^d).$
\end{remark}
\begin{corollary}\label{cor:weaker_v_mu}
Under the assumptions of Theorem \ref{step1Theorem} and $\mu_0\in H^{-s-1}(\T^d)$ (that replaces the assumption $\mu_0\in H^{s-1}(\T^d)$), there exists $T_{**}>0$ and a unique distributional solution $(v,\mu)$ to \eqref{eq:linearized}  such that if $T\in (0,T_{**})$ then 
$$
(v,\mu)\in L^\infty([0,T]; H^{-s}(\T^d))\times L^\infty([0,T]; H^{-s-1}(\T^d)).
$$
The proof of this result is straightforward, since the system \eqref{eq:linearized} is linear with smooth coefficients (and it follows from the estimates in the proof of Theorem \ref{estim:thm-H-s}), so we omit it.
\end{corollary}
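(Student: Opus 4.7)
The plan is to argue by approximation and then pass to the limit using the a priori bounds supplied by Theorem \ref{thm:v_H-s_bound}. First, given $\mu_{0}\in H^{-s-1}(\T^{d})$, I would mollify it by a standard Friedrichs mollifier to obtain $\mu_{0}^{\varepsilon}\in C^{\infty}(\T^{d})\subset H^{s-1}(\T^{d})$ with $\mu_{0}^{\varepsilon}\to\mu_{0}$ in $H^{-s-1}(\T^{d})$ and uniformly bounded in that norm (in particular, $\|\mu_{0}^{\varepsilon}\|_{H^{-s-1}}\le\|\mu_{0}\|_{H^{-s-1}}$). Since the coefficients of \eqref{eq:linearized} depend only on the fixed solution $(u,m)$ of the MFG system, the mollified initial datum sits in $H^{s-1}(\T^{d})$ and Theorem \ref{step1Theorem} produces a unique regular solution $(v^{\varepsilon},\mu^{\varepsilon})\in L^{\infty}([0,T];H^{s})\times L^{\infty}([0,T];H^{s-1})$ of \eqref{eq:linearized} with terminal datum $\frac{\delta G}{\delta m}(\cdot,m_{T})\mu^{\varepsilon}_{T}$ and initial datum $\mu_{0}^{\varepsilon}$, provided $T<T_{*}$.

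Next, the crucial observation is that the energy estimates in the proof of Theorem \ref{thm:v_H-s_bound} never used that $\mu_{0}\in H^{s-1}$; they only used the regularity of $(u,m)$ and the bound \eqref{payoffAssumption}. Applying those estimates to $(v^{\varepsilon},\mu^{\varepsilon})$ yields, for all $T<T_{**}\le T_{*}$, a bound of the form
\begin{equation}\nonumber
\sup_{t\in[0,T]}\|v^{\varepsilon}(t,\cdot)\|_{H^{-s}}^{2}+\sup_{t\in[0,T]}\|\mu^{\varepsilon}(t,\cdot)\|_{H^{-s-1}}^{2}+\int_{0}^{T}\!\Big(\|\nabla v^{\varepsilon}\|_{H^{-s}}^{2}+\|\nabla\mu^{\varepsilon}\|_{H^{-s-1}}^{2}\Big)d\tau\le c\|\mu_{0}^{\varepsilon}\|_{H^{-s-1}}^{2}\le c\|\mu_{0}\|_{H^{-s-1}}^{2},
\end{equation}
with $c$ independent of $\varepsilon$. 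This uniform bound allows me to extract (up to a subsequence) weak-$\ast$ limits $v^{\varepsilon}\weaklys v$ in $L^{\infty}([0,T];H^{-s})$ and $\mu^{\varepsilon}\weaklys\mu$ in $L^{\infty}([0,T];H^{-s-1})$. Reading the linearized equations \eqref{eq:linearized} in the sense of distributions against a test function $\varphi\in C_{c}^{\infty}((0,T)\times\T^{d})$, every term is a pairing of one of $v^{\varepsilon}$, $\mu^{\varepsilon}$, or $\nabla v^{\varepsilon}$ (after integration by parts) against a fixed smooth function built from $(u,m)$ and derivatives of $\varphi$. Because the coefficients $D_{p}\cH$, $\partial_{q}\cH$, $mD^{2}_{pp}\cH$ and $mD_{p}\partial_{q}\cH$ are fixed smooth functions, these pairings pass to the limit, and one obtains that $(v,\mu)$ is a distributional solution of \eqref{eq:linearized} with initial datum $\mu_{0}$ and terminal datum $\frac{\delta G}{\delta m}(\cdot,m_{T})\mu_{T}$ (the latter being a well-defined element of $H^{-s}$ by duality from \eqref{payoffAssumption}).

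For uniqueness, I would use linearity: if $(v_{1},\mu_{1})$ and $(v_{2},\mu_{2})$ are two distributional solutions in the stated class with the same data, then $(w,\eta):=(v_{1}-v_{2},\mu_{1}-\mu_{2})$ solves the same system with zero initial and terminal data. One cannot test directly with $\Lambda^{-2s}w$, $\Lambda^{-2s-2}\eta$ because of low regularity, so the clean way is to perform another mollification-in-space on the equation, apply the estimates of Theorem \ref{thm:v_H-s_bound} to the mollified quantities, and send the mollification parameter to zero using the fact that the coefficients are smooth. This yields
\begin{equation}\nonumber
\sup_{t\in[0,T]}\|w(t,\cdot)\|_{H^{-s}}^{2}+\sup_{t\in[0,T]}\|\eta(t,\cdot)\|_{H^{-s-1}}^{2}\le 0,
\end{equation}
so $w=0$ and $\eta=0$. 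The main subtlety, and really the only non-bookkeeping point, is justifying this duality/commutator step, i.e. checking that convolving the distributional equation with a spatial mollifier produces only an $O(1)$ commutator in the appropriate negative norms; this is standard for equations with $C^{\infty}$ coefficients, which is precisely what makes the authors' remark "straightforward" accurate.
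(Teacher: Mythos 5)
Your approach is correct and is essentially the one the authors have in mind: the remark preceding the corollary points out that the $H^{-s}$/$H^{-s-1}$ energy estimates of Theorem~\ref{thm:v_H-s_bound} only require $\mu_0\in H^{-s-1}$, and you exploit exactly this by running a compactness argument with those uniform bounds. Your organization (mollify $\mu_0$, invoke Theorem~\ref{step1Theorem} for existence at the smooth level, bootstrap the negative-index estimates, pass to the limit) is a two-step version of the paper's implicit one-step suggestion of rerunning the iterative scheme of Theorem~\ref{step1Theorem} directly with the $H^{-s}$ estimates; the two are interchangeable.

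One point deserves more care than you give it: passing to the limit in the boundary data. Weak-$\star$ convergence of $\mu^\varepsilon$ in $L^\infty([0,T];H^{-s-1})$ does not by itself give convergence of the time slices $\mu^\varepsilon(T,\cdot)$ and $\mu^\varepsilon(0,\cdot)$, and the terminal condition $v(T,\cdot)=\frac{\delta G}{\delta m}(\cdot,m_T)\mu(T,\cdot)$ is precisely the nonlocal-in-time coupling that makes the system forward--backward, so it cannot simply be absorbed into the interior distributional identity tested against $\varphi\in C^\infty_c((0,T)\times\T^d)$. You do, however, have the dissipative terms from the proof of Theorem~\ref{thm:v_H-s_bound}, namely $\int_0^T\|\nabla v^\varepsilon\|_{H^{-s}}^2 d\tau$ and $\int_0^T\|\nabla\mu^\varepsilon\|_{H^{-s-1}}^2 d\tau$, uniformly bounded, and the equations themselves bound $\partial_t v^\varepsilon$ and $\partial_t\mu^\varepsilon$ in suitable negative spaces; an Aubin--Lions (or Arzel\`a--Ascoli, as in the proof of Theorem~\ref{step1Theorem}) argument then upgrades the convergence to $C([0,T];H^{-s-\delta})\times C([0,T];H^{-s-1-\delta})$ for some $\delta>0$, which is enough to identify $v(T,\cdot)$, $\mu(T,\cdot)$, and $\mu(0,\cdot)$ in the limit. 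You should make this explicit. Your uniqueness argument by spatial mollification of the difference and control of the resulting commutators is the standard route and is consistent with the authors' assessment that the proof is ``straightforward'' because the coefficients are smooth.
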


{ The estimate in Theorem \ref{thm:v_H-s_bound} yields the following result.
\begin{lemma}\label{lem:Riesz}
The $v$ component of the solution to the linearized system \eqref{eq:linearized} (given by Theorem \ref{thm:linearized_intro}) can be represented as
\begin{equation}\label{eq:repr}
v(t,x)=\langle\mu_0,K(t,x,m_0,\cdot)\rangle_{H^{-s},H^{s}}, \ \forall (t,x)\in[0,T_*)\times\T^d,
\end{equation}
for a unique $K:[0,T_*)\times\T^d\times\sQ_R\times\T^d\to\R$. In particular $K(t,x,m_0,\cdot)\in H^s(\T^d)$.
\end{lemma}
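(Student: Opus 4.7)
The plan is to apply a Riesz representation on the Hilbert space $H^{-s}(\T^d)$, whose topological dual is identified with $H^s(\T^d)$ via the pairing $\langle\cdot,\cdot\rangle_{H^{-s},H^s}$, using the estimate of Theorem \ref{thm:v_H-s_bound} as the key input. Fix $(t, x, m_0) \in [0, T_*) \times \T^d \times \sQ_R$ and let $(u, m)$ be the corresponding solution of \eqref{eq:mfg}. For each $\mu_0 \in H^{s-1}(\T^d)$, Theorem \ref{thm:linearized_intro}(1) produces a unique $(v[\mu_0], \mu[\mu_0])$ solving \eqref{eq:linearized} with $v[\mu_0] \in L^\infty([0,T]; H^s(\T^d))$, and Corollary \ref{cor:v_regular} gives $v[\mu_0] \in C([0, T]\times\T^d)$. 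Thus $v[\mu_0](t, x)$ is pointwise defined, and linearity of \eqref{eq:linearized} in the data $\mu_0$ makes
\[
\Lambda_{t, x, m_0}: H^{s-1}(\T^d) \to \R, \qquad \Lambda_{t, x, m_0}(\mu_0) := v[\mu_0](t, x)
\]
a linear functional.

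The crux is to establish the continuity estimate
\[
|\Lambda_{t, x, m_0}(\mu_0)| \le c\,\|\mu_0\|_{H^{-s}}, \qquad \forall \mu_0 \in H^{s-1}(\T^d).
\]
Theorem \ref{thm:v_H-s_bound} combined with the trivial embedding $H^{-s}(\T^d) \hookrightarrow H^{-s-1}(\T^d)$ immediately yields $\sup_\tau \|v[\mu_0](\tau,\cdot)\|_{H^{-s}} \le c\|\mu_0\|_{H^{-s}}$, which is an $H^{-s}$-norm control rather than a pointwise one. To upgrade this to the pointwise bound above, one exploits the parabolic smoothing of the backward $v$-equation: writing $v(t, \cdot)$ by Duhamel from the terminal datum $v(T, \cdot) = \tfrac{\delta G}{\delta m}(\cdot, m_T)\mu_T$ (which gains regularity through \eqref{payoffAssumption}), together with the $L^2$-gain $\int_0^T \|\nabla v\|_{H^{-s}}^2\,d\tau \le c\|\mu_0\|_{H^{-s-1}}^2$ from Remark (3) following Theorem \ref{thm:v_H-s_bound}, the heat semigroup bridges the gap between $H^{-s}$ and $C^0$ uniformly in $(t, x) \in [0, T_*)\times \T^d$.

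Granted the estimate above, since $s-1 > -s$ we have $H^{s-1}(\T^d)$ dense in $H^{-s}(\T^d)$, so $\Lambda_{t, x, m_0}$ extends uniquely by continuity to a bounded linear functional on $H^{-s}(\T^d)$. Riesz representation on this Hilbert space then produces a unique $K(t, x, m_0, \cdot) \in H^s(\T^d)$ satisfying
\[
\Lambda_{t, x, m_0}(\mu_0) = \langle \mu_0, K(t, x, m_0, \cdot)\rangle_{H^{-s}, H^s}, \qquad \forall \mu_0 \in H^{-s}(\T^d),
\]
which specialized to $\mu_0 \in H^{s-1}$ gives \eqref{eq:repr}. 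Uniqueness of $K$ is inherited from the uniqueness clause of Riesz, and the construction is carried out pointwise in $(t, x, m_0)$.

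The main obstacle is the pointwise-from-distributional transfer in the second step. A naive interpolation combining the classical bound $\|v[\mu_0]\|_{L^\infty H^s} \le c\|\mu_0\|_{H^{s-1}}$ of Theorem \ref{step1Theorem} with the $H^{-s}$-bound of Theorem \ref{thm:v_H-s_bound} produces only a mixed estimate of the form $|v(t,x)| \le c\|\mu_0\|_{H^{-s}}^\theta \|\mu_0\|_{H^{s-1}}^{1-\theta}$ for some $\theta \in (0,1)$, which is insufficient to extend $\Lambda_{t, x, m_0}$ by density in the $H^{-s}$ topology. The parabolic smoothing of the linearized system, used together with the enhanced bounds of Remark (3), is precisely the mechanism needed to close this gap and obtain the sharp $H^{-s}$-continuity required.
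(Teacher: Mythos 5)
Your plan is the natural one: turn the estimate of Theorem \ref{thm:v_H-s_bound} into boundedness of the evaluation functional $\mu_0 \mapsto v[\mu_0](t,x)$ on $H^{-s}(\T^d)$ and invoke Riesz. But the crucial estimate $|v[\mu_0](t,x)| \le c\,\|\mu_0\|_{H^{-s}}$ is not available, and the parabolic-smoothing argument you sketch cannot supply it. Here is the quantitative obstruction. Since $s>d/2$, the Sobolev embedding gives $|v(t,x)| \le c\,\|v(t,\cdot)\|_{H^{s}}$ and this is sharp, so your pointwise bound would be equivalent to an operator bound $\|v(t,\cdot)\|_{H^{s}} \le c\,\|\mu_0\|_{H^{-s}}$, i.e.\ a gain of $2s$ derivatives from data to solution. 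Theorem \ref{thm:v_H-s_bound} gives exactly one derivative of gain ($H^{-s-1}\to H^{-s}$), and Remark (3) following it upgrades this only to an $L^2$-in-time control of $\nabla v$ in $H^{-s}$ --- nowhere near $H^s$. Even checking through Duhamel makes this manifest: the forcing $\partial_q\cH\,\mu$ lives at best in $H^{-s-1}$, and to push $e^{t\Delta}$ applied to such data into $H^{s}$ you would need the kernel to absorb $2s+1$ derivatives, which produces a nonintegrable time singularity for $s$ as large as \eqref{hyp:s} requires. So the step labelled ``the heat semigroup bridges the gap'' fails, and the mixed bound you correctly identified as ``insufficient'' is in fact the end of the road for a direct density/Riesz argument.

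The paper's proof takes a different, and necessary, route. Writing $T\mu_0 := v(t,\cdot)$ and $Q := \Lambda^{-2s} : H^{-s}\to H^{s}$, it observes that $QT: H^{-s}\to H^{s}$ is bounded; since $H^{s}\hookrightarrow C^0$, the evaluation $\mu_0\mapsto (QT\mu_0)(x)$ \emph{is} a bounded linear functional on $H^{-s}$, and Riesz produces a kernel $K(x,\cdot)\in H^s_y$ for the regularized operator. The operator $Q$ is then undone \emph{on the $x$ variable of the kernel only}, setting $\tilde K := Q_x^{-1} K$, which degrades the $x$-regularity to $\tilde K(\cdot,y)\in H^{-s}_x$ but leaves the claimed $\tilde K(x,\cdot)\in H^s_y$ intact, and one checks the representation at the dual (adjoint) level. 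The point your proposal misses is that one should not ask for $T$ itself to have pointwise-bounded outputs; instead, conjugate by the canonical isomorphism $Q$ so that Riesz can be applied to $QT$, and accept the resulting loss of regularity in the $x$-slot of the kernel. You should replace the ``parabolic smoothing bridges $H^{-s}$ to $C^0$'' step with this composition argument.
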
}
\begin{proof}
Let us denote by $T:H^{-s}(\T^d)\to H^{-s}(\T^d)$ the bounded linear operator such that $T(\mu_0)=v(t,\cdot)$ and let  $T':H^s(\T^d)\to H^s(\T^d)$ stand for its adjoint. %for any $\mu_0\in H^{-s}(\T^d)$ and for any $h\in H^s(\T^d)$ we have
%$$\langle v(t,\cdot),h\rangle_{H^{-s},H^s}=\langle T(\mu_0),h\rangle_{H^{-s},H^s}=\langle \mu_0,T'(h)\rangle_{H^{-s},H^s}.$$
%And so, one can define $K(t,\cdot,m_0)(\cdot)$ such that 
%\begin{align*}
%\Big\langle\langle \mu_0(y),K(t,x,m_0)(y)\rangle_{H^{-s},H^{s}},h(x)\Big\rangle_{H^{-s},H^s}&=\Big\langle \mu_0(y),\langle K(t,x,m_0)(y), h(x)\rangle_{H^{-s},H^{s}}\Big\rangle_{H^{-s},H^s}\\
%&=\langle \mu_0,T'(h)\rangle_{H^{-s},H^s}
%\end{align*}
%Let $T:H^{-s}\rightarrow H^{-s}$ be a bounded linear operator, so that its adjoint $T'$ maps $T':H^{s}\rightarrow H^{s}.$ 
Let $Q:H^{-s}(\T^d)\rightarrow H^{s}(\T^d)$ canonically, so that $Q^{-1}:H^{s}\rightarrow H^{-s}$ canonically.
(The operator $Q$ can be expressed simply in terms of the operator $\Lambda$ we have introduced previously;
since $f\in H^{-s}$ if and only if $\Lambda^{-s}f\in L^{2},$ and similarly for $H^{s},$ we may define $Qf=\Lambda^{-2s}f$
for any $f\in H^{-s}.$  Then $\Lambda^{s}Qf=\Lambda^{-s}f\in L^{2},$ and we see $Qf\in H^{s}.$  The inverse is similarly
given by $Q^{-1}=\Lambda^{2s}.$)

For any $\mu_{0}\in H^{-s}(\T^d),$ we have $QT\mu_{0}\in H^{s}.$  Since $s>\frac{d}{2},$ if we let $x$ be given,
then $\mu_{0}\mapsto (QT\mu_{0})(x)$ is a bounded linear functional on $H^{-s}.$  Thus this bounded linear functional
can be represented as the $L^{2}$ inner product with an element of $H^{s}.$  We write this as
\begin{equation}\nonumber
(QT\mu_{0})(x)=\langle\mu_{0}(y),K(x,y)\rangle_{(H^{-s},H^{s})_{y}},
\end{equation}
where we have established $K(x,y)\in H^{s}_{y}(\T^d)$ for any $x.$

If we fix $y_{0}$ and let $\mu_{0}$ be the Dirac mass supported at $y_{0},$ then we have
\begin{equation}\nonumber
(QT\mu_{0})(x)=K(x,y_{0}).
\end{equation}
We know that $QT\mu_{0}\in H^{s}_{x}(\T^d),$ so we have $K(x,y_{0})\in H^{s}_{x}(\T^d)$ for all $y_{0}.$

We let $\tilde{K}=Q^{-1}_{x}K,$ and we see that $\tilde{K}(\cdot,y)\in H^{-s}_{x}$ for all $y$ and
$\tilde{K}(x,\cdot)\in H^{s}_{y}$ for all $x.$
So far, we have the following:
\begin{align*}\nonumber
\langle T\mu_{0}(x),h(x)\rangle_{(H^{-s},H^{s})_{x}}
&=\langle Q^{-1}_{x}\langle \mu_{0}(y),K(x,y)\rangle_{(H^{-s},H^{s})_{y}},h(x)\rangle_{(H^{-s},H^{s})_{x}}
\\
&=\langle \langle \mu_{0}(y),Q^{-1}_{x}K(x,y)\rangle_{(H^{-s},H^{s})_{y}},h(x)\rangle_{(H^{-s},H^{s})_{x}}
\\
&=\langle\langle\mu_{0}(y),\tilde{K}(x,y)\rangle_{(H^{-s},H^{s})_{y}},h(x)\rangle_{(H^{-s},H^{s})_{x}}.
\end{align*}
We can then write this as
\begin{equation}\nonumber
\langle T\mu_{0}(x),h(x)\rangle_{(H^{-s},H^{s})_{x}}
=
\int\int\mu_{0}(y)\tilde{K}(x,y)h(x)\ dydx.
\end{equation}

Considering the 
subset of $H^{-s}(\T^d)\times H^{s}(\T^d)$ of pairs of functions $(\mu_0,h)$ each of which has exponentially decaying Fourier series,
we see that we may change the order of integration, finding
\begin{multline}\label{withIntegrals}
\langle T\mu_{0}(x),h(x)\rangle_{(H^{-s},H^{s})_{x}}
=
\int\int\mu_{0}(y)\tilde{K}(x,y)h(x)\ dxdy
\\
=\langle \mu_{0}(y),\langle \tilde{K}(x,y),h(x)\rangle_{(H^{-s},H^{s})_{x}}\rangle_{(H^{-s},H^{s})_{y}}
=\langle \mu_{0}(x),T'h(x)\rangle_{(H^{-s},H^{s})_{x}}.
\end{multline}
Since \eqref{withIntegrals} holds on a dense subset, this representation holds
for all of $H^{-s}(\T^d)\times H^{s}(\T^d).$ The thesis of the lemma follows by setting $K(t,x,m_0,\cdot):=\tilde K(x,\cdot)$.
\end{proof}

\section{The first variation of the master function}\label{sec:step3}
%Started by David on 1/30/2019

%We will use the integral form of remainder.  For a real-valued function $f$ of several variables,
%we have the Taylor formula
%\begin{equation}\nonumber
%f(\mathbf{a}+\mathbf{h})=\sum_{|\alpha|\leq k}\frac{\partial^{\alpha}f(\mathbf{a})}{\alpha!}\mathbf{h}^{\alpha}
%+(k+1)\sum_{|\alpha|=k+1}\frac{\mathbf{h}^{\alpha}}{\alpha!}\int_{0}^{1}
%(1-t)^{k}\partial^{\alpha}f(\mathbf{a}+t\mathbf{h})\ dt.
%\end{equation}
%(I have taken this form of a Taylor formula from a pdf note by Folland which is in the dropbox; we could perhaps find
%a more formal reference.)
%We want to do a kind of composition estimate with this, where $\mathbf{a}$ and $\mathbf{h}$ are 
%vector-valued Sobolev functions, with the idea that these would be along the lines of $(\nabla u, m).$

 In addition to the problem \eqref{eq:z} written for $z,$ we need to develop the 
corresponding problem for $\nu:=\tilde{m}-m-\mu.$  Note that the initial data for $\nu$ is a signed measure that has zero total mass, but is not equal to zero. An equation for $\nu$ is needed, since the PDE in \eqref{eq:z} involves the term $\partial_q\mathcal{H}\mu$, to which we need to add and subtract $\tilde{m}-m.$  One of the resulting terms will help to complete the Taylor expansion of $\mathcal{H},$ but
the other resulting term will include $\nu,$ which then must be estimated.

We take \eqref{eq:z} and perform the mentioned adding and subtracting:
\begin{multline}\nonumber
-\partial_{t}z-\Delta z+D_{p}\mathcal{H}(t,x,\nabla u,m)\cdot\nabla z=
-\mathcal{H}(t,x,\nabla\tilde{u},\tilde{m})+\mathcal{H}(t,x,\nabla u,m)
+D_{p}\mathcal{H}(t,x,\nabla u,m)\cdot\nabla(\tilde{u}-u)\\
-\partial_q\mathcal{H}(t,x,\nabla u,m)(\tilde{m}-m-\mu)+\partial_q\mathcal{H}(t,x,\nabla u,m)(\tilde{m}-m).
\end{multline}
We rearrange this to put the term with $\nu$ on the left-hand side:
\begin{multline}\label{zExplicit}
-\partial_{t}z-\Delta z+D_{p}\mathcal{H}(t,x,\nabla u,m)\cdot\nabla z+\partial_q\mathcal{H}(t,x,\nabla u,m)\nu=
-\mathcal{H}(t,x,\nabla\tilde{u},\tilde{m})+\mathcal{H}(t,x,\nabla u,m)\\
+D_{p}\mathcal{H}(t,x,\nabla u,m)\cdot\nabla(\tilde{u}-u)
+\partial_q\mathcal{H}(t,x,\nabla u,m)(\tilde{m}-m).
\end{multline}
%Notice that the right-hand side is a remainder from making a Taylor expansion of $\mathcal{H}.$  {\color{red} (Can one extend the estimates of step 3 which appear on page 5, since there we were using $H^{-1}$? I don't know what would you like to say here...)}

We next need to form the $\nu$ equation.  Clearly we have $\partial_{t}\nu=\partial_{t}\tilde{m}-\partial_{t}m-\partial_{t}\mu,$
and we use \eqref{eq:mfg} for the $\tilde{m}$ and $m$ equations, and we use \eqref{eq:linearized} for the $\mu$ equation.
For brevity we will write $D_{p}\mathcal{H}=D_{p}\mathcal{H}(t,x,\nabla u,m)$ and 
$\widetilde{D_{p}\mathcal{H}}=D_{p}\mathcal{H}(t,x,\nabla\tilde{u},\tilde{m}),$ and similarly for $D_{pp}^{2}\mathcal{H}$
and $D_p\partial_q\mathcal{H}.$  To begin, just substituting for $\partial_{t}\tilde{m},$ $\partial_{t}m,$ and $\partial_{t}\mu,$
we have
\begin{equation}\nonumber
\partial_{t}\nu=\Delta\nu+\mathrm{div}(\tilde{m}\widetilde{D_{p}\mathcal{H}})
-\mathrm{div}(mD_{p}\mathcal{H})
-\mathrm{div}(\mu D_{p}\mathcal{H})-\mathrm{div}(mD_{pp}^{2}\mathcal{H}\nabla v)
+\mathrm{div}(mD_p\partial_q\mathcal{H}\mu).
\end{equation}
We next add and subtract a few times; for each of the two times on the right-hand side that $\mu$ appears, we add and
subtract $\tilde{m}-m$ to make $\nu$ appear, and in the one instance that $v$ appears on the right-hand side, we add and
subtract $\tilde{u}-u$ so as to bring out $z.$  These considerations yield the following:
\begin{multline}\label{nuExplicit}
\partial_{t}\nu=\Delta\nu-\mathrm{div}(\tilde{m}\widetilde{D_{p}\mathcal{H}})+\mathrm{div}(mD_{p}\mathcal{H})
+\mathrm{div}(\nu D_{p}\mathcal{H})-\mathrm{div}((\tilde{m}-m) D_{p}\mathcal{H})
\\
+\mathrm{div}(mD_{pp}^{2}\mathcal{H}\nabla z)-\mathrm{div}(mD_{pp}^{2}\mathcal{H}\nabla(\tilde{u}-u))
-\mathrm{div}(mD_p\partial_q\mathcal{H}\nu)+\mathrm{div}(mD_p\partial_q\mathcal{H}(\tilde{m}-m)).
\end{multline}

We restate the equations \eqref{zExplicit}, \eqref{nuExplicit} so that the terms which do not explicitly involve $z$ or $\nu$
are rephrased as forcing terms,
\begin{equation}\label{zWithF}
-\partial_{t}z-\Delta z+D_{p}\mathcal{H}(t,x,\nabla u,m)\cdot\nabla z+\partial_q\mathcal{H}(t,x,\nabla u,m)\nu+
F_{1}=0,
\end{equation}
\begin{equation}\label{nuWithF}
\partial_{t}\nu=\Delta\nu
+\mathrm{div}(\nu D_{p}\mathcal{H})
+\mathrm{div}(mD_{pp}^{2}\mathcal{H}\nabla z)
-\mathrm{div}(mD_p\partial_q\mathcal{H}\nu)+F_{2}.
\end{equation}
Here, we recall the formulas for $F_{1}$ and $F_{2}$ from \eqref{f1Bound} and \eqref{f2Bound}.

\begin{theorem}\label{thm:z_regularity}  
Let $s$ satisfy \eqref{hyp:s} and let  %$r>\max\left\{4^{-1}\left\lceil(d+5)/2\right\rceil,  \left\lceil d/2\right\rceil\right\}$ be given.  
$r$ be given as in \eqref{con:r}. { Let $R>0$.} Let $(u,m), (\tilde u, \tilde m)$ be solutions to \eqref{eq:mfg}, with initial data $m_0,\tilde m_0\in { \sQ_R}$, respectively, given by Theorem \ref{thm:david_old}. Let $(v,\nu)$ be the solution to \eqref{eq:linearized} with initial data $\mu_0:=\tilde m_0-m_0$, given by Theorem \ref{thm:linearized_intro}.  There exists $c>0$ and $T_{***}>0$ such that if $T\in(0,T_{***}),$ then
\begin{equation}\label{estim:z_regularity}
\sup_{t\in[0,T]}\|z\|_{H^{r}}^{2}\leq c\|m_{0}-\tilde{m}_{0}\|_{H^{-1}}^{5/2}.
\end{equation}
\end{theorem}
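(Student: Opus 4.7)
The plan is to analyze the forward-backward system \eqref{zWithF}-\eqref{nuWithF} for $(z,\nu)$ as a linear inhomogeneous variant of the linearized system \eqref{eq:linearized}, exploiting two crucial features of the data: $\nu(0,\cdot) = \tilde m_0 - m_0 - \mu_0 = 0$, while the terminal datum $z(T,\cdot) = G(\cdot, \tilde m_T) - G(\cdot, m_T) - \frac{\delta G}{\delta m}(\cdot, m_T)\mu_T$ and the forcing terms $F_1, F_2$ are each second-order in MFG differences. I would run the energy method of Section \ref{sec:step1} on $(z,\nu)$ at the $H^r \times H^{r-1}$ level (with $r$ as in \eqref{con:r}), handling the forcing via Cauchy--Schwarz, \eqref{estim:Hr-Hs} and Young's inequality with small parameters. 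For the terminal condition, the key decomposition is, using $\mu_T = (\tilde m_T - m_T) - \nu_T$,
$$z(T,\cdot) = \Big[G(\cdot, \tilde m_T) - G(\cdot, m_T) - \tfrac{\delta G}{\delta m}(\cdot, m_T)(\tilde m_T - m_T)\Big] + \tfrac{\delta G}{\delta m}(\cdot, m_T)\nu_T;$$
the first bracket is of order $\|\tilde m_T - m_T\|_{H^s}^2$ by \eqref{hyp:G-C11}, while the second (linear in $\nu_T$) is absorbed into the left-hand side of the energy estimate via the same $\kappa$-trick that constrained $T_{**}$ in Section \ref{sec:step1}. Choosing $T_{***}$ small enough to close the scheme, this should yield
$$\sup_{t\in[0,T]}\|z\|_{H^r}^2 + \sup_{t\in[0,T]}\|\nu\|_{H^{r-1}}^2 \le c\|\tilde m_T - m_T\|_{H^s}^4 + c\int_0^T \bigl(\|F_1\|_{H^r}^2 + \|F_2\|_{H^{r-1}}^2\bigr)d\tau.$$

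Next, I would bound the right-hand side by MFG differences. The assumptions \eqref{f1Bound}--\eqref{f2Bound} directly give $\|F_1\|_{H^r}^2 + \|F_2\|_{H^{r-1}}^2 \le c(\|u - \tilde u\|_{H^{r+1}}^4 + \|m - \tilde m\|_{H^r}^4),$ and together with the terminal contribution one is reduced to controlling the MFG quantities $\|u - \tilde u\|_{H^{r+1}},\|m - \tilde m\|_{H^r}$ and $\|\tilde m_T - m_T\|_{H^s}$ in terms of $\|m_0 - \tilde m_0\|_{H^{-1}}$. For this I would invoke the stability result for the MFG system provided by Appendix \ref{sec:appendix_estimate}, which controls the MFG difference in a weak norm by $\|m_0 - \tilde m_0\|_{H^{-1}}$, combined with the uniform $H^s$ a priori bounds from Theorem \ref{thm:david_old}. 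Sobolev interpolation \eqref{sobolevInterpolation}, tuned by the gap $s \ge 4r+1$ encoded in \eqref{con:r}, then yields
$$\|u - \tilde u\|_{H^{r+1}} + \|m - \tilde m\|_{H^r} + \|\tilde m_T - m_T\|_{H^s} \le c\|m_0 - \tilde m_0\|_{H^{-1}}^{5/8},$$
so raising to the fourth power and plugging into the energy estimate produces the claimed bound $\|z\|_{H^r}^2 \le c\|m_0 - \tilde m_0\|_{H^{-1}}^{5/2}$.

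The main obstacle is precisely the third (interpolation) step: producing the specific exponent $5/8$ requires carefully tracking which negative-index norm of the MFG difference is controlled by the appendix's stability estimate and how the margin $s \ge 4r+1$ in \eqref{con:r} feeds into \eqref{sobolevInterpolation}. A secondary, more routine difficulty lies in the energy estimate of step one: the coupling between $z$ and $\nu$ (through $\partial_q\cH\nu$ in the $z$ equation, $\mathrm{div}(mD_{pp}^2\cH\nabla z)$ in the $\nu$ equation, and $\frac{\delta G}{\delta m}\nu_T$ in the terminal datum) forces the two estimates to be run jointly and combined, exactly as $(v,\mu)$ were handled in Section \ref{sec:step1}, so that no new conceptual difficulty arises but the bookkeeping must match that of \eqref{readyForInduction} so that the Young parameters can be absorbed for small $T$.
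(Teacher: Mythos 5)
Your proposal follows essentially the same route as the paper: energy estimates for $(z,\nu)$ at the $H^{r}\times H^{r-1}$ level with the forcing terms $F_1,F_2$ fed through \eqref{f1Bound}--\eqref{f2Bound}, followed by the Appendix stability estimate \eqref{priorStability} and Sobolev interpolation tuned to the gap $s\geq 4r+1$ to convert $\|m-\tilde m\|_{H^r}^4$ into $\|m_0-\tilde m_0\|_{H^{-1}}^{5/2}$ (your one-shot exponent $5/8$ raised to the fourth power is the same computation as the paper's two-step interpolation via $\|m_0-\tilde m_0\|_{L^2}^3$). One small point in your favor: you explicitly decompose $z(T,\cdot)$ into the quadratic-in-$(\tilde m_T-m_T)$ piece from \eqref{hyp:G-C11} plus $\frac{\delta G}{\delta m}(\cdot,m_T)\nu_T$ and invoke the $\kappa$-weighting trick of Section \ref{sec:step1} to absorb the latter; the paper's proof states only that $z(T,\cdot)$ "is quadratic," leaving that linear-in-$\nu_T$ contribution and its absorption implicit.
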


\begin{proof}
We will make energy estimates for $z\in H^{r}(\T^d)$ and $\nu\in H^{r-1}(\T^d),$ for the given $r.$  We therefore define
\begin{equation}\nonumber
E_{z}=\frac{1}{2}\int_{\mathbb{T}^{d}}(\Lambda^{r} z)^{2}\ dx,
\end{equation}
\begin{equation}\nonumber
E_{\nu}=\frac{1}{2}\int_{\mathbb{T}^{d}}(\Lambda^{r-1}\nu)^{2}\ dx.
\end{equation}
{Let us notice that by the assumptions on $s$ in \eqref{hyp:s} and the choice of $r$ in \eqref{con:r}, $H^r(\T^d)$ is an algebra}.

Then we have
\begin{equation}\nonumber
\frac{dE_{z}}{dt}=\int_{\mathbb{T}^{d}}(\Lambda^{r}z)(\Lambda^{r}\partial_{t}z)\ dx.
\end{equation}
We substitute from \eqref{zWithF}, to get
\begin{multline}\nonumber
\frac{dE_{z}}{dt}=-\int_{\mathbb{T}^{d}}(\Lambda^{r}z)(\Lambda^{r}\Delta z)\ dx
+\int_{\mathbb{T}^{d}}(\Lambda^{r}z)(\Lambda^{r}(D_{p}\mathcal{H}\cdot\nabla z))\ dx
\\
+\int_{\mathbb{T}^{d}}(\Lambda^{r}z)(\Lambda^{r}(\partial_q\mathcal{H}\nu))\ dx
+\int_{\mathbb{T}^{d}}(\Lambda^{r}z)(\Lambda^{r}F_{1})\ dx
= I_{z}+II_{z}+III_{z}+IV_{z}.
\end{multline}
We integrate in time from time $t$ to time $T$ to find
\begin{equation}\label{EzToSubstitute}
E_{z}(t)=E_{z}(T)-\int_{t}^{T}I_{z}\ d\tau-\int_{t}^{T}II_{z}\ d\tau-\int_{t}^{T}III_{z}\ d\tau-\int_{t}^{T}IV_{z}\ d\tau.
\end{equation}

For the term involving $I_{z},$ we integrate by parts with respect to the spatial variable,
\begin{equation}\label{final1z}
-\int_{t}^{T}I_{z}\ d\tau=-\int_{t}^{T}\int_{\mathbb{T}^{d}}(\Lambda^{r}\nabla z)^{2}\ dxd\tau.
\end{equation}
For the term involving $II_{z},$ we use Young's inequality with parameter $\sigma_{1},$
\begin{equation}\nonumber
-\int_{t}^{T}II_{z}\ d\tau \leq \int_{t}^{T}\frac{1}{2\sigma_{1}}\|z\|_{H^{r}}^{2}\ d\tau
+\int_{t}^{T}\frac{\sigma_{1}}{2}\|(D_{p}\mathcal{H})\nabla z\|_{H^{r}}^{2}\ d\tau.
\end{equation}
We continue by bounding $D_{p}\mathcal{H}$ by a constant, since this depends only on the solution
$(u,m)$ of the original problem.  We also bound the integrals on the right-hand side, using a supremum in time 
for the first of these, and taking a larger domain of integration for the second of these:
\begin{equation}\label{final2z}
-\int_{t}^{T}II_{z}\ d\tau\leq \frac{T}{2\sigma_{1}}\sup_{t\in[0,T]}\|z\|_{H^{r}}^{2}
+\frac{c\sigma_{1}}{2}\int_{0}^{T}\int_{\mathbb{T}^{d}}(\Lambda^{r}\nabla z)^{2}\ dxd\tau.
\end{equation}
We treat $III_{z}$ similarly, applying Young's inequality with positive parameter $\sigma_{2},$
\begin{equation}\nonumber
-\int_{t}^{T}III_{z}\ d\tau
\leq
\int_{t}^{T}\frac{1}{2\sigma_{2}}\|z\|_{H^{r}}^{2}\ d\tau
+\int_{t}^{T}\frac{\sigma_{2}}{2}\|(\partial_q\mathcal{H})\nu\|_{H^{r}}^{2}\ d\tau.
\end{equation}
Again, continuing as we have for the previous term, we find
\begin{equation}\nonumber
-\int_{t}^{T}III_{z}\ d\tau\leq\frac{T}{2\sigma_{2}}\sup_{t\in[0,T]}\|z\|_{H^{r}}^{2}
+\frac{c\sigma_{2}}{2}\int_{0}^{T}\int_{\mathbb{T}^{d}}(\Lambda^{r}\nu)^{2}\ dxd\tau.
\end{equation}
We next wish to change the form of the final integral on the right-hand side, to have $(\Lambda^{r-1}\nabla\nu)^{2}$
rather than $(\Lambda^{r}\nu)^{2}$. Using the inequality
$$\int_{\T^d}(\Lambda^r\nu)^2 dx\le c\left(\|\nu\|_{H^{r-1}} +\int_{\T^d}(\Lambda^{r-1}\nabla\nu)^2 dx\right)$$
we obtain the bound
\begin{equation}\label{final3z}
-\int_{t}^{T}III_{z}\ d\tau\leq\frac{T}{2\sigma_{2}}\sup_{t\in[0,T]}\|z\|_{H^{r}}^{2}
+c\sigma_{2}T\sup_{t\in[0,T]}\|\nu\|_{H^{r-1}}^{2}
+c\sigma_{2}\int_{0}^{T}\int_{\mathbb{T}^{d}}(\Lambda^{r-1}\nabla\nu)^{2}\ dxd\tau.
\end{equation}
For the term involving $IV_{z},$ we use Young's inequality without parameter,
\begin{equation}\nonumber
-\int_{t}^{T}IV_{z}\ d\tau
\leq \frac{T}{2}\sup_{t\in[0,T]}\|z\|_{H^{r}}^{2}
+\frac{T}{2}\sup_{t\in[0,T]}\|F_{1}\|_{H^{r}}^{2}.
\end{equation}
We then use \eqref{f1Bound}, finding
\begin{equation}\label{final4z}
-\int_{t}^{T}IV_{z}\ d\tau
\leq \frac{T}{2}\sup_{t\in[0,T]}\|z\|_{H^{r}}^{2}
+c\sup_{t\in[0,T]}(\|m-\tilde{m}\|_{H^{r}}^{2}+\|u-\tilde{u}\|_{H^{r+1}}^{2}).
\end{equation}

We combine \eqref{EzToSubstitute}, using the definition of $E_{z},$ with \eqref{final1z}, \eqref{final2z},
\eqref{final3z}, and \eqref{final4z}.  This yields
\begin{multline}\nonumber
\frac{1}{2}\|z(t,\cdot)\|_{H^{r}}^{2}+\int_{t}^{T}\int_{\mathbb{T}^{d}}(\Lambda^{r}\nabla z)^{2}\ dxd\tau
\\
\leq\frac{1}{2}\|z(T,\cdot)\|_{H^{r}}^{2}
+\frac{T}{2}\left(\frac{1}{\sigma_{1}}+\frac{1}{\sigma_{2}}+1\right)\sup_{t\in[0,T]}\|z\|_{H^{r}}^{2}
+c\sigma_{2}T\sup_{t\in[0,T]}\|\nu\|_{H^{r-1}}^{2}
\\
+c\sigma_{1}\int_{0}^{T}\int_{\mathbb{T}^{d}}(\Lambda^{r}\nabla z)^{2}\ dxd\tau
+c\sigma_{2}\int_{0}^{T}\int_{\mathbb{T}^{d}}(\Lambda^{r-1}\nabla\nu)^{2}\ dxd\tau
\\
+c\sup_{t\in[0,T]}\left(\|m-\tilde{m}\|_{H^{r}}^{2}+\|u-\tilde{u}\|_{H^{r+1}}^{2}\right).
\end{multline}
Since by \eqref{hyp:G-C11} the data $z(T,\cdot)$ is also quadratic in $m-\tilde{m},$ this simplifies slightly as
\begin{multline}\nonumber
\frac{1}{2}\|z(t,\cdot)\|_{H^{r}}^{2}+\int_{t}^{T}\int_{\mathbb{T}^{d}}(\Lambda^{r}\nabla z)^{2}\ dxd\tau
\\
\leq\frac{T}{2}\left(\frac{1}{\sigma_{1}}+\frac{1}{\sigma_{2}}+1\right)\sup_{t\in[0,T]}\|z\|_{H^{r}}^{2}
+c\sigma_{2}T\sup_{t\in[0,T]}\|\nu\|_{H^{r-1}}^{2}
\\
+c\sigma_{1}\int_{0}^{T}\int_{\mathbb{T}^{d}}(\Lambda^{r}\nabla z)^{2}\ dxd\tau
+c\sigma_{2}\int_{0}^{T}\int_{\mathbb{T}^{d}}(\Lambda^{r-1}\nabla\nu)^{2}\ dxd\tau
\\
+c\sup_{t\in[0,T]}\left(\|m-\tilde{m}\|_{H^{r}}^{2}+\|u-\tilde{u}\|_{H^{r+1}}^{2}\right).
\end{multline}
We then treat the terms on the left-hand side separately, and take the supremum with respect to time, finding
\begin{multline}\label{firstSupToAdd}
\frac{1}{2}\sup_{t\in[0,T]}\|z\|_{H^{r}}^{2}
\leq\frac{T}{2}\left(\frac{1}{\sigma_{1}}+\frac{1}{\sigma_{2}}+1\right)\sup_{t\in[0,T]}\|z\|_{H^{r}}^{2}
+c\sigma_{2}T\sup_{t\in[0,T]}\|\nu\|_{H^{r-1}}^{2}
\\
+c\sigma_{1}\int_{0}^{T}\int_{\mathbb{T}^{d}}(\Lambda^{r}\nabla z)^{2}\ dxd\tau
+c\sigma_{2}\int_{0}^{T}\int_{\mathbb{T}^{d}}(\Lambda^{r-1}\nabla\nu)^{2}\ dxd\tau
\\
+c\sup_{t\in[0,T]}\left(\|m-\tilde{m}\|_{H^{r}}^{2}+\|u-\tilde{u}\|_{H^{r+1}}^{2}\right).
\end{multline}
as well as
\begin{multline}\label{firstTimeIntegralToAdd}
\int_{0}^{T}\int_{\mathbb{T}^{d}}(\Lambda^{r}\nabla z)^{2}\ dxd\tau
\leq\frac{T}{2}\left(\frac{1}{\sigma_{1}}+\frac{1}{\sigma_{2}}+1\right)\sup_{t\in[0,T]}\|z\|_{H^{r}}^{2}
+c\sigma_{2}T\sup_{t\in[0,T]}\|\nu\|_{H^{r-1}}^{2}
\\
+c\sigma_{1}\int_{0}^{T}\int_{\mathbb{T}^{d}}(\Lambda^{r}\nabla z)^{2}\ dxd\tau
+c\sigma_{2}\int_{0}^{T}\int_{\mathbb{T}^{d}}(\Lambda^{r-1}\nabla\nu)^{2}\ dxd\tau
\\
+c\sup_{t\in[0,T]}\left(\|m-\tilde{m}\|_{H^{r}}^{2}+\|u-\tilde{u}\|_{H^{r+1}}^{2}\right).
\end{multline}

We next write out the time derivative of $E_{\nu},$
\begin{equation}\nonumber
\frac{dE_{\nu}}{dt}=\int_{\mathbb{T}^{d}}(\Lambda^{r-1}\nu)(\Lambda^{r-1}\partial_{t}\nu)\ dx.
\end{equation}
We substitute from \eqref{nuWithF} to find
\begin{multline}\nonumber
\frac{dE_{\nu}}{dt}=\int_{\mathbb{T}^{d}}(\Lambda^{r-1}\nu)(\Lambda^{r-1}\Delta\nu)\ dx
+\int_{\mathbb{T}^{d}}(\Lambda^{r-1}\nu)(\Lambda^{r-1}\mathrm{div}(\nu D_{p}\mathcal{H}))\ dx
\\
+\int_{\mathbb{T}^{d}}(\Lambda^{r-1}\nu)(\Lambda^{r-1}\mathrm{div}(mD_{pp}^{2}\mathcal{H}\nabla z))\ dx
-\int_{\mathbb{T}^{d}}(\Lambda^{r-1}\nu)(\Lambda^{r-1}\mathrm{div}(mD_p\partial_q\mathcal{H} \nu))\ dx
\\
+\int_{\mathbb{T}^{d}}(\Lambda^{r-1}\nu)(\Lambda^{r-1}F_{2})\ dx=I_{\nu}+II_{\nu}+III_{\nu}+IV_{\nu}+V_{\nu}.
\end{multline}
We integrate in time from time zero until time $T,$ and we use $E_{\nu}(0)=0,$ finding
\begin{equation}\label{ENuToSubstitute}
E_{\nu}(t)=\int_{0}^{t}I_{\nu}\ d\tau+\int_{0}^{t}II_{\nu}\ d\tau
+\int_{0}^{t}III_{\nu}\ d\tau+\int_{0}^{t}IV_{\nu}\ d\tau+\int_{0}^{t}V_{\nu}\ d\tau.
\end{equation}

For the term involving $I_{\nu},$ we integrate by parts with respect to the spatial variable,
\begin{equation}\label{final1Nu}
\int_{0}^{t}I_{\nu}\ d\tau=-\int_{0}^{t}\int_{\mathbb{T}^{d}}(\Lambda^{r-1}\nabla\nu)^{2}\ dxd\tau.
\end{equation}
For the term involving $II_{\nu},$ we use Young's inequality with paramter $\sigma_{3},$
and we proceed as we did above for the terms involving $II_{z}$ and $III_{z},$
\begin{equation}\label{final2Nu}
\int_{0}^{t}|II_{\nu}|\ d\tau \leq \frac{T}{2\sigma_{3}}\sup_{t\in[0,T]}\|\nu\|_{H^{r-1}}^{2}
+\frac{c\sigma_{3}}{2}\int_{0}^{T}\int_{\mathbb{T}^{d}}(\Lambda^{r-1}\nabla\nu)^{2}\ dxd\tau.
\end{equation}
For the term involving $III_{\nu},$ we use Young's inequality with parameter $\sigma_{4},$
\begin{equation}\label{final3Nu}
\int_{0}^{t}| III_{\nu}|\ d\tau\leq\frac{T}{2\sigma_{4}}\sup_{t\in[0,T]}\|\nu\|_{H^{r-1}}^{2}
+\frac{c\sigma_{4}}{2}\int_{0}^{T}\int_{\mathbb{T}^{d}}(\Lambda^{r}\nabla z)^{2}\ dxd\tau.
\end{equation}
For the term involving $IV_{\nu}$ we continue in the same manner, using Young's inequality with positive parameter
$\sigma_{5},$
\begin{equation}\label{final4Nu}
\int_{0}^{t}|IV_{\nu}|\ d\tau\leq\frac{T}{2\sigma_{5}}\sup_{t\in[0,T]}\|\nu\|_{H^{r-1}}^{2}
+\frac{c\sigma_{5}}{2}\int_{0}^{T}\int_{\mathbb{T}^{d}}(\Lambda^{r-1}\nabla\nu)^{2}\ dxd\tau.
\end{equation}
Finally, we estimate the term involving $V_{\nu}$ in the same manner we estimated the above term involving $IV_{z},$
\begin{equation}\nonumber
\int_{0}^{t}|V_{\nu}|\ d\tau\leq \frac{T}{2}\sup_{t\in[0,T]}\|\nu\|_{H^{r-1}}^{2}
+\frac{T}{2}\sup_{t\in[0,T]}\|F_{2}\|_{H^{r-1}}^{2}.
\end{equation}
We use \eqref{f2Bound} with this, yielding
\begin{equation}\label{final5Nu}
\int_{0}^{t}|V_{\nu}|\ d\tau\leq \frac{T}{2}\sup_{t\in[0,T]}\|\nu\|_{H^{r-1}}^{2}
+c\sup_{t\in[0,T]}\left(\|m-\tilde{m}\|_{H^{r-1}}^{4}+\|u-\tilde{u}\|_{H^{r}}^{4}\right).
\end{equation}

We use the definition of $E_{\nu}$ in \eqref{ENuToSubstitute}, and we substitute from
\eqref{final1Nu}, \eqref{final2Nu}, \eqref{final3Nu}, \eqref{final4Nu}, and \eqref{final5Nu}.  This yields
\begin{multline}\nonumber
\frac{1}{2}\|\nu(t,\cdot)\|_{H^{r-1}}^{2}+\int_{0}^{t}\int_{\mathbb{T}^{d}}(\Lambda^{r-1}\nabla\nu)^{2}\ dxd\tau
\\
\leq
\frac{T}{2}\left(\frac{1}{\sigma_{3}}+\frac{1}{\sigma_{4}}+\frac{1}{\sigma_{5}}+1\right)
\sup_{t\in[0,T]}\|\nu\|_{H^{r-1}}^{2}
\\
+c(\sigma_{3}+\sigma_{5})\int_{0}^{T}\int_{\mathbb{T}^{d}}(\Lambda^{r-1}\nabla\nu)^{2}\ dxd\tau
+c\sigma_{4}\int_{0}^{T}\int_{\mathbb{T}^{d}}(\Lambda^{r}\nabla z)^{2}\ dxd\tau
\\
+c\sup_{t\in[0,T]}\left(\|m-\tilde{m}\|_{H^{r-1}}^{4}+\|u-\tilde{u}\|_{H^{r}}^{4}\right).
\end{multline}
Treating the terms on the left-hand side separately, and taking the supremum with respect to time, we
thus find
\begin{multline}\label{secondSupToAdd}
\frac{1}{2}\sup_{t\in[0,T]}\|\nu\|_{H^{r-1}}^{2}
\leq
\frac{T}{2}\left(\frac{1}{\sigma_{3}}+\frac{1}{\sigma_{4}}+\frac{1}{\sigma_{5}}+1\right)
\sup_{t\in[0,T]}\|\nu\|_{H^{r-1}}^{2}
\\
+c(\sigma_{3}+\sigma_{5})\int_{0}^{T}\int_{\mathbb{T}^{d}}(\Lambda^{r-1}\nabla\nu)^{2}\ dxd\tau
+c\sigma_{4}\int_{0}^{T}\int_{\mathbb{T}^{d}}(\Lambda^{r}\nabla z)^{2}\ dxd\tau
\\
+c\sup_{t\in[0,T]}\left(\|m-\tilde{m}\|_{H^{r-1}}^{4}+\|u-\tilde{u}\|_{H^{r}}^{4}\right),
\end{multline}
and also
\begin{multline}\label{secondTimeIntegralToAdd}
\int_{0}^{T}\int_{\mathbb{T}}^{d}(\Lambda^{r-1}\nabla\nu)^{2}\ dxd\tau
\leq
\frac{T}{2}\left(\frac{1}{\sigma_{3}}+\frac{1}{\sigma_{4}}+\frac{1}{\sigma_{5}}+1\right)
\sup_{t\in[0,T]}\|\nu\|_{H^{r-1}}^{2}
\\
+c(\sigma_{3}+\sigma_{5})\int_{0}^{T}\int_{\mathbb{T}^{d}}(\Lambda^{r-1}\nabla\nu)^{2}\ dxd\tau
+c\sigma_{4}\int_{0}^{T}\int_{\mathbb{T}^{d}}(\Lambda^{r}\nabla z)^{2}\ dxd\tau
\\
+c\sup_{t\in[0,T]}\left(\|m-\tilde{m}\|_{H^{r-1}}^{4}+\|u-\tilde{u}\|_{H^{r}}^{4}\right).
\end{multline}

We add \eqref{firstTimeIntegralToAdd} and \eqref{secondTimeIntegralToAdd}, finding
\begin{multline}\nonumber
\int_{0}^{T}\int_{\mathbb{T}^{d}}(\Lambda^{r}\nabla z)^{2}\ dxd\tau
+\int_{0}^{T}\int_{\mathbb{T}^{d}}(\Lambda^{r-1}\nabla\nu)^{2}\ dxd\tau
\\
\leq
\frac{T}{2}\left(\frac{1}{\sigma_{1}}+\frac{1}{\sigma_{2}}+1\right)\sup_{t\in[0,T]}\|z\|_{H^{r}}^{2}
+\left(\frac{T}{2\sigma_{3}}+\frac{T}{2\sigma_{4}}+\frac{T}{2\sigma_{5}}+\frac{T}{2}+c\sigma_{2}T\right)
\sup_{t\in[0,T]}\|\nu\|_{H^{r-1}}^{2}
\\
+c(\sigma_{1}+\sigma_{4})\int_{0}^{T}\int_{\mathbb{T}^{d}}(\Lambda^{r}\nabla z)^{2}\ dxd\tau
+c(\sigma_{2}+\sigma_{3}+\sigma_{5})\int_{0}^{T}\int_{\mathbb{T}^{d}}(\Lambda^{r-1}\nabla\nu)^{2}\ dxd\tau
\\
+c\sup_{t\in[0,T]}\left(\|m-\tilde{m}\|_{H^{r}}^{4}+\|u-\tilde{u}\|_{H^{r+1}}^{4}\right).
\end{multline}
We choose the constants so that
\begin{equation}\nonumber
c(\sigma_{1}+\sigma_{4})=\frac{1}{2},\qquad
c(\sigma_{2}+\sigma_{3}+\sigma_{5})=\frac{1}{2}.
\end{equation}
We then conclude
\begin{multline}\label{integralConclusion}
\int_{0}^{T}\int_{\mathbb{T}^{d}}(\Lambda^{r}\nabla z)^{2}\ dxd\tau
+\int_{0}^{T}\int_{\mathbb{T}^{d}}(\Lambda^{r-1}\nabla\nu)^{2}\ dxd\tau
\\
\leq cT\sup_{t\in[0,T]}\|z\|_{H^{r}}^{2}+cT\sup_{t\in[0,T]}\|\nu\|_{H^{r-1}}^{2}
+c\sup_{t\in[0,T]}\left(\|m-\tilde{m}\|_{H^{r}}^{4}+\|u-\tilde{u}\|_{H^{r+1}}^{4}\right).
\end{multline}

We next use the bound \eqref{integralConclusion} on the right-hand sides of \eqref{firstSupToAdd} and 
\eqref{secondSupToAdd}, finding
\begin{multline}\nonumber
\sup_{t\in[0,T]}\|z\|_{H^{r}}^{2}+\sup_{t\in[0,T]}\|\nu\|_{H^{r-1}}^{2}
\leq cT\sup_{t\in[0,T]}\|z\|_{H^{r}}^{2}+cT\sup_{t\in[0,T]}\|\nu\|_{H^{r-1}}^{2}
\\
+c\sup_{t\in[0,T]}\left(\|m-\tilde{m}\|_{H^{r}}^{4}+\|u-\tilde{u}\|_{H^{r+1}}^{4}\right).
\end{multline}
Taking $T$ sufficiently small, we have
\begin{equation}\nonumber
\sup_{t\in[0,T]}\|z\|_{H^{r}}^{2}+\sup_{t\in[0,T]}\|\nu\|_{H^{r-1}}^{2}
\leq c\sup_{t\in[0,T]}\left(\|m-\tilde{m}\|_{H^{r}}^{4}+\|u-\tilde{u}\|_{H^{r+1}}^{4}\right).
\end{equation}
Neglecting the $\nu$-term on the left-hand side, this establishes
\begin{equation}\label{zGoal}
\sup_{t\in[0,T]}\|z\|_{H^{r}}^{2}\leq c\sup_{t\in[0,T]}\left(\|m-\tilde{m}\|_{H^{r}}^{2}+\|u-\tilde{u}\|_{H^{r+1}}^{2}\right).
\end{equation}

We will use the Sobolev interpolation inequality \eqref{sobolevInterpolation}, and we will do this in two steps.
First we take $\alpha=r$ and $\beta=4r.$  Then we have
\begin{equation}\label{firstUseInterp}
\|f\|_{H^{r}}^{4}\leq c \|f\|_{L^{2}}^{3}\|f\|_{H^{4r}}.
\end{equation}
We do this because, Theorem \ref{uniquenessTheorem} yields
\begin{equation}\label{priorStability}
\sup_{t\in[0,T]}\left(\|m-\tilde{m}\|_{L^{2}}^{4}+\|u-\tilde{u}\|_{H^{1}}^{4}\right)\leq c\|m_{0}-\tilde{m}_{0}\|_{L^{2}}^{4}.
\end{equation}
Although this estimate is the consequence of results from \cite{Amb18:2},  for completeness, we provide its proof in Appendix \ref{sec:appendix_estimate}. So, combining \eqref{zGoal} with \eqref{firstUseInterp} and \eqref{priorStability}, we have
\begin{equation}\nonumber
\sup_{t\in[0,T]}\|z\|_{H^{r}}^{2}\leq c\left(\sup_{t\in[0,T]}\|m-\tilde{m}\|_{L^{2}}^{3}+\|u-\tilde{u}\|_{H^{1}}^{3}\right)
\leq c\|m_{0}-\tilde{m}_{0}\|_{L^{2}}^{3},
\end{equation}
where we have used that $m$ and $\tilde{m}$ remain bounded in $H^{4r},$ 
and $u$ and $\tilde{u}$ remain bounded in $H^{4r+1},$
throughout the time interval $[0,T].$
We then make another use of \eqref{sobolevInterpolation}, with $\alpha=1$ and $\beta=6,$
\begin{equation}\nonumber
\|f\|_{L^{2}}^{3}=\|\Lambda^{-1}f\|_{H^{1}}^{3}\leq\|\Lambda^{-1}f\|_{L^{2}}^{5/2}\|\Lambda^{-1}f\|_{H^{6}}^{1/2}
=c\|f\|_{H^{-1}}^{5/2}\|f\|_{H^{5}}^{1/2}.
\end{equation}
With $m_{0}$ and $\tilde{m}_{0}$ bounded in $H^{5},$ this implies
\begin{equation}\nonumber
\sup_{t\in[0,T]}\|z\|_{H^{r}}^{2}\leq c\|m_{0}-\tilde{m}_{0}\|_{H^{-1}}^{5/2}.
\end{equation}
As desired, we have established $z=o(\|m_{0}-\tilde{m}_{0}\|_{H^{-1}}).$  This completes the proof.
\end{proof}

%{\color{red}(In the previous proof we used that $m\in H^{4r}$, $u\in H^{4r+1}$, since below I would like to use $r> \lceil\frac{d}{2}\rceil$, we need to impose $s>4\lceil\frac{d}{2}\rceil$ in \eqref{hyp:s}.)}

\begin{remark} (1) We notice that the constant $c$ in the estimate \eqref{estim:z_regularity} is depending on $\|D_p\cH(t,\cdot,\nabla u,m)\|_{H^r}$, $\|\partial_q\cH(t,\cdot,\nabla u,m)\|_{H^r},$ $\|mD^2_{pp}\cH(t,\cdot,\nabla u,m)\|_{H^r}$, $\|mD_p\partial_q\cH(t,\cdot,\nabla u,m)\|_{H^r}$, $\|m\|_{H^{4r}}$, $\|\tilde m\|_{H^{4r}}$, $\|u\|_{H^{4r+1}}$, $\|\tilde u\|_{H^{4r+1}}$ and on the constants from the assumptions \eqref{f1Bound}-\eqref{f2Bound}. These last constants depend in particular on 
$$\|D^2\cH(t,\cdot,m,\nabla u)\|_{H^r}\ \ {\rm{and}}\ \ \|mD^3\cH(t,\cdot,m,\nabla u)\|_{H^r}.$$

(2) The time horizon appearing in Theorem \ref{thm:z_regularity} satisfies $T_{***}<\frac{1}{c}.$
\end{remark}

\subsection{Regularity of the master function}\label{subsec:regularity}

We show now how the results from Sections \ref{sec:step1}, \ref{sec:step2} and \ref{sec:step3} will yield the necessary regularity of the master functions $U$. % with respect to the measure variable. Its regularity with respect to the time and space variables is inherited from the regularity of $u$.

Let us recall our standing assumption \eqref{hyp:s}. We recall $r$ from \eqref{con:r} which in particular satisfies $r>\lceil \frac{d}{2}\rceil$.

\begin{theorem}\label{thm:U_reg}
Let { $R>0$ and let} $U:[0,T]\times\T^d\times{ \sQ_R}\to\R$ be defined in \eqref{def:U}. Then 
\begin{itemize}
\item[(i)] $U(\cdot,\cdot,m_0)\in L^\infty([0,T]; H^s(\T^d))\cap C([0,T]; C^{2,\a}(\T^d))\cap C^{1}([0,T]\times\T^d)$ for all $m_0\in { \sQ_R}$ and $t\in[0,T]$. 
\item[(ii)] For all $(t,x)\in[0,T]\times\T^d$, $y\in\T^d$ and $m_0\in { \sQ_R}$, the function $\frac{\delta U}{\delta m} (t,x,m_0)(y)$ is well defined. Moreover,  $\frac{\delta U}{\delta m} (t,x,m_0)(\cdot)$ defines a continuous linear operator on $H^{-s}(\T^d).$
 %\in H^{s}(\T^d)\cap C^{2,\a}(\T^d)..... and $\frac{\delta U}{\delta m}(\cdot, \cdot, m_0)(y)\in C([0,T];C^{2,\a}(\T^d)).$
\item[(iii)] The map $m\mapsto U(t,x,m)$ is G\^ateaux differentiable in $H^{s}(\T^d)$, uniformly with respect to $t,x\in[0,T]\times\T^d$. In particular,  there exists $C>0$ such that 
%$$
%\sup_{t\in[0,T]}\left\| U(t,\cdot,\tilde m_0) -U(t,\cdot,m_0) - \int_{\T^d}\frac{\delta U}{\delta m}(t,\cdot,m_0)(y) \dd(\tilde m_0-m_0)(y) \right\|_{H^r}  \le C\|m_0-\tilde m_0\|_{H^{-1}}^{\frac54},
%$$
%for $r>0$ given in Theorem \ref{thm:z_regularity}. In particular, by the Sobolev embedding theorem, this implies further
$$
\sup_{t\in[0,T],x\in\T^d}\left| U(t,x,\tilde m_0) -U(t,x,m_0) - \int_{\T^d}\frac{\delta U}{\delta m}(t,x,m_0)(y) \dd(\tilde m_0-m_0)(y) \right|  \le C\|m_0-\tilde m_0\|_{H^{s}}^{\frac54},
$$
for all $\tilde m_0, m_0\in { \sQ_R}$.
\item[(iv)] $H^s(\T^d)\ni m\mapsto \frac{\d U}{\d m}(t,\cdot,m)(y)\in H^{-s}(\T^d)$ is continuous uniformly with respect to $(t,y)\in[0,T]\times\T^d$.
\item[(v)] The functions $\T^d\ni y\mapsto \frac{\d U}{\d m}(t,\cdot,m_0)(y)\in H^{-s}(\T^d)$, $\T^d\ni y\mapsto \nabla_y\frac{\d U}{\d m}(t,\cdot,m_0)(y)\in H^{-s}(\T^d)$ and $\T^d\ni y\mapsto D^2_{yy}\frac{\d U}{\d m}(t,\cdot,m_0)(y)\in H^{-s}(\T^d)$ are Lipschitz continuous, for any $(t,m_0)\in[0,T]\times { \sQ_R}.$
\end{itemize}
\end{theorem}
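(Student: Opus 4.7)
The plan is to derive (i)--(v) in sequence by assembling the results of Sections \ref{sec:step1}--\ref{sec:step3}. Parts (i) and (ii) will be essentially immediate. Part (i) is a direct transcription of Theorem \ref{thm:david_old} and Corollary \ref{cor:diff_u_m} via the definition \eqref{def:U}. For (ii), I set $\frac{\delta U}{\delta m}(t,x,m_0)(y):=K(t,x,m_0,y)$ with $K$ supplied by Lemma \ref{lem:Riesz}; continuity of the functional $\mu\mapsto\langle\mu,K(t,x,m_0,\cdot)\rangle_{H^{-s},H^s}$ on $H^{-s}(\T^d)$ then follows from $K(t,x,m_0,\cdot)\in H^s(\T^d)$.

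The heart of the statement is (iii), and this is essentially packaged in Theorem \ref{thm:z_regularity}. For $m_0,\tilde m_0\in\sQ_R$, take $(u,m)$, $(\tilde u,\tilde m)$ solving \eqref{eq:mfg} and $(v,\mu)$ solving \eqref{eq:linearized} with $\mu_0:=\tilde m_0-m_0$. By \eqref{def:U} and Lemma \ref{lem:Riesz},
\[
z(t,x):=\tilde u(t,x)-u(t,x)-v(t,x)=U(t,x,\tilde m_0)-U(t,x,m_0)-\int_{\T^d}\frac{\delta U}{\delta m}(t,x,m_0)(y)\dd(\tilde m_0-m_0)(y).
\]
Theorem \ref{thm:z_regularity} provides $\sup_{t}\|z\|_{H^{r}}^{2}\le c\|m_0-\tilde m_0\|_{H^{-1}}^{5/2}$, and since $r>\lceil d/2\rceil$, the Sobolev embedding $H^r\hookrightarrow L^\infty$ converts this into the claimed uniform-in-$x$ bound; the trivial inclusion $H^s\hookrightarrow H^{-1}$ upgrades the right-hand side to $\|m_0-\tilde m_0\|_{H^s}^{5/4}$.

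For (v) the plan is to feed singular initial data into Theorem \ref{thm:v_H-s_bound}. Since $s$ satisfies \eqref{hyp:s}, all of $\delta_y$, $\nabla\delta_y$, $D^2_{yy}\delta_y$ lie in $H^{-s-1}(\T^d)$, and a direct Fourier-series computation shows
\[
\|\partial^{\gamma}\delta_y-\partial^{\gamma}\delta_{y'}\|_{H^{-s-1}}\le C|y-y'|,\qquad|\gamma|\le 2,
\]
as soon as $s>d/2+|\gamma|$. Running the linearized system with these choices of $\mu_0$, the Riesz identification in Lemma \ref{lem:Riesz} expresses the resulting $v(t,\cdot)$ as the corresponding $y$-derivative of $K(t,\cdot,m_0,y)-K(t,\cdot,m_0,y')$, and Theorem \ref{thm:v_H-s_bound} delivers the Lipschitz continuity in $H^{-s}(\T^d)$, uniformly in $t$.

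Part (iv) is the one piece for which no ready-made estimate sits in the earlier sections, and I expect this to be the main residual obstacle. The plan there is as follows: fix $y\in\T^d$ and, for $m_0,m_0'\in\sQ_R$, let $v,v'$ be the linearized solutions built around the MFG solutions $(u,m),(u',m')$ with the common initial condition $\mu_0=\delta_y$. The difference $w:=v-v'$ solves the linearized system with coefficients from $(u',m')$, forced by terms proportional to the differences $D_p\cH(\cdot,\nabla u,m)-D_p\cH(\cdot,\nabla u',m')$ and the analogous differences for $\partial_q\cH$, $D_{pp}^2\cH$, $D_p\partial_q\cH$. MFG stability (the estimate of the appendix underlying \eqref{priorStability} used in the proof of Theorem \ref{thm:z_regularity}) makes these forcing terms small in the relevant norms when $m_0'\to m_0$ in $H^s$; rerunning the $H^{-s}$ energy estimate from the proof of Theorem \ref{thm:v_H-s_bound} on the equation for $w$ produces the desired continuity of $K(t,\cdot,m_0,y)$ in $H^{-s}(\T^d)$, uniformly in $(t,y)$.
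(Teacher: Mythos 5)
Your proposal follows essentially the same route as the paper's own proof: (i) via Corollary \ref{cor:diff_u_m}, (ii)--(iii) by identifying $\frac{\delta U}{\delta m}$ with the kernel $K$ from Lemma \ref{lem:Riesz} and invoking Theorem \ref{thm:z_regularity} together with Sobolev embedding, (v) via the $H^{-s}$ bound of Theorem \ref{thm:v_H-s_bound} applied to Dirac masses and their derivatives, and (iv) by continuous dependence of the linearized system's coefficients on $m_0$. The only minor stylistic difference is in (iv): you explicitly form the difference equation for $w=v-v'$ and treat the coefficient mismatches as forcing, whereas the paper simply asserts the continuous dependence of $v$ on $m_0$ via the continuity of the coefficients in \eqref{eq:linearized} (citing Corollary \ref{cor:weaker_v_mu}); both are the same idea at the same level of detail, and yours makes the underlying mechanism somewhat more transparent.
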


\begin{proof}
(i) By the definition of $U$ in \eqref{def:U}, for each fixed $m_0\in { \sQ_R}$, $U(\cdot,\cdot,m_0)$ inherits the regularity properties of $u$. The result follows from Corollary \ref{cor:diff_u_m}.

\smallskip

(ii-iii) { Let us recall $K$ from the Riesz type representation, provided in Lemma \ref{lem:Riesz}.}

By \eqref{def:z}, \eqref{def:U} and the estimates in Theorem \ref{thm:z_regularity}, for $m_0,\tilde m_0\in { \sQ_R},$ we see that there exists a constant $C>0$ such that 
$$\sup_{t\in[0,T]}\Big\| U(t,\cdot,\tilde m_0) -U(t,\cdot,m_0) - \int_{\T^d}K(t,\cdot,m_0)(y)  \dd(\tilde m_0-m_0)(y) \Big\|_{H^r}  \le C \|m_0-\tilde m_0\|_{H^{-1}}^{\frac54}.$$
If $r>\lceil \frac{d}{2}\rceil$, we have in addition
$$\sup_{t\in[0,T]}\sup_{x\in\T^d}\Big| U(t,x,\tilde m_0) -U(t,x,m_0) - \int_{\T^d}K(t,x,m_0)(y)  \dd(\tilde m_0-m_0)(y) \Big|  \le C \|m_0-\tilde m_0\|_{H^{-1}}^{\frac54}\le C \|m_0-\tilde m_0\|_{H^{s}}^{\frac54}.$$
By the regularity of $K$, this means that $U(t,x,\cdot)$ is G\^ateaux differentiable in $H^{-s}(\T^d)\cap H^s(\T^d)= H^s(\T^d)$, uniformly with respect to $(t,x)\in[0,T]\times\T^d$. 

Now, let us fix $y_0\in\T^d$. Since $\sP(\T^d) \hookrightarrow H^{-s-1}(\T^d)$, we have that $\mu_0=\d_{y_0}\in H^{-s-1}(\T^d).$ And so, by the previous representation one obtains 
$$
v(t,\cdot) = \int_{\T^d}K(t,x,m_0)(y)\dd \mu_0(y)=K(t,x,m_0)(y_0) 
%\frac{\d U}{\d m}(t,\cdot,m_0)(y)\dd \mu_0(y)=\frac{\d U}{\d m}(t,\cdot,m_0)(y_0),
$$
and so, we can set 
$$
\frac{\d U}{\d m}(t,x,m_0)(y_0):=K(t,x,m_0)(y_0) ,
$$
and this expression is meaningful pointwise.

\smallskip

(iv) 
%{\color{red} (I think this reasoning below is not correct)
%The inequality in (iii) and the assumption on $r$ yield that
%\begin{align*}
%\sup_{t\in[0,T]}\sup_{x\in\T^d}\Bigg| \int_{\T^d}\bigg[\frac{\delta U}{\delta m}(t,x,m_0)(y)-\frac{\delta U}{\delta m}(t,x,\tilde m_0)(y)\bigg] ( m_0-\tilde m_0)(y)\dd y \Bigg | &\le C\|m_0-\tilde m_0\|_{H^{-1}}^{\frac54}\\
%&\le C\|m_0-\tilde m_0\|_{H^{1}}^{\frac54},
%\end{align*}
%where in the last inequality we have used the continuous embedding $H^s(\T^d)\hookrightarrow H^{-1}(\T^d).$ The desired continuity result follows by dividing this last inequality by $\|m_0-\tilde m_0\|_{H^{s}}$ and sending $\tilde m_0\to m_0$ in $H^s(\T^d)$.
%}

\medskip

By Corollary \ref{cor:weaker_v_mu} for $\mu_0\in H^{-s-1}(\T^d)$ we have that $v\in L^\infty([0,T]; H^{-s}(\T^d))$. Therefore, \eqref{eq:repr} yields that
$$
H^{-s}(\T^d)\ni v(t,\cdot) = \int_{\T^d}\frac{\d U}{\d m}(t,\cdot,m_0)(y)\dd \mu_0(y),
$$ 
for all $t\in[0,T].$ Since the solution $(u,m)$ of \eqref{eq:mfg} depends continuously on $m_0\in { \sQ_R}$, and so the coefficient functions in \eqref{eq:linearized} depend also continuously on $m_0$, we have that for each fixed $\mu_0\in H^{-s-1}(\T^d)$, $v(t,\cdot)$ depends continuously on $m_0\in { \sQ_R}$, uniformly in $t\in[0,T]$. Now, let us fix $y_0\in\T^d$ and consider $\mu_0=\d_{y_0}\in H^{-s-1}(\T^d).$ Again, by the previous representation one has 
$$
v(t,\cdot) = \int_{\T^d}\frac{\d U}{\d m}(t,\cdot,m_0)(y)\dd \mu_0(y)=\frac{\d U}{\d m}(t,\cdot,m_0)(y_0).
$$
Thus, the previous reasonings imply that $H^s(\T^d)\ni m_0\mapsto \frac{\d U}{\d m}(t,\cdot,m_0)(y_0)$ is continuous, uniformly with respect to $(t,y_0)\in [0,T]\times\T^d$, and so the claim follows.

\medskip

(v) Now, let us define $v(\cdot,\cdot,y)$ to be the first component of the solution of \eqref{eq:linearized} with initial condition $\d_y$. 
By linearity of the system \eqref{eq:linearized}, we find 
$$
v(t,x,y) - v(t,x,z) = \frac{\d U}{\d m}(t,x,m_0)(y) -  \frac{\d U}{\d m}(t,x,m_0)(z).
$$
Thus, \eqref{estim:thm-H-s} yields
\begin{align*}
\|v(t,\cdot,y) - v(t,\cdot,z)\|_{H^{-s}} = \Big\|\frac{\d U}{\d m}(t,\cdot,m_0)(y) -  \frac{\d U}{\d m}(t,\cdot,m_0)(z)\Big\|_{H^{-s}}\le C\|\delta_y-\delta_z\|_{H^{-s-1}}\le C|y-z|.
\end{align*}
This shows the Lipschitz continuity of $\T^d\ni y\mapsto \frac{\d U}{\d m}(t,\cdot,m_0)(y)\in H^{-s}(\T^d).$

\medskip

Now, for $\e\in\R$ small and $e_i\in\R^d$ the $i^{th}$ canonical basis element we set $y^i_\e:=y+\e e_i$. 
Again, by linearity of the system \eqref{eq:linearized}, we find 
$$
v(t,x,y^i_\e) - v(t,x,y) = \frac{\d U}{\d m}(t,x,m_0)(y^i_\e) -  \frac{\d U}{\d m}(t,x,m_0)(y).
$$
By dividing this identity by $\e$ and taking the limit as $\e\to 0$, one obtains in the sense of distributions
$$
\partial_{y_i}v(t,x,y)= \partial_{y_i}\frac{\d U}{\d m}(t,x,m_0)(y).
$$
This last object also corresponds to 
$$\left\langle\frac{\d U}{\d m}(t,x,m_0)(y), -\partial_{y_i}\d_y\right\rangle_{H^s,H^{-s}},$$ 
where $\partial_{y_i}\d_y$ stands for the distributional derivative of $\d_y$. This is the same as solving \eqref{eq:linearized} with $\mu_0=-\partial_{y_i}\d_y$.

Therefore, fixing $y,y'\in\T^d$, the linearity of the system \eqref{eq:linearized} \eqref{estim:thm-H-s} yields
\begin{align*}
&\left\|\nabla_y\frac{\d U}{\d m}(t,\cdot,m_0)(y) -  \nabla_{y}\frac{\d U}{\d m}(t,\cdot,m_0)(y')\right\|_{H^{-s}}\le C\left\|D\d_{y} - D\d_{y'} \right\|_{H^{-s-1}}\le C |y - y'|.
\end{align*}
In the last inequality we have relied on the following observation. Since $\widehat{D\d_y}(k)=ik\widehat{\d_y}(k)=\frac{ik}{(2\pi)^d}e^{-ik\cdot y}$, we have
$$
\widehat{D\d_y}(k)- \widehat{D\d_{y'}}(k)=\frac{ik}{(2\pi)^d}(y-y')\cdot(-ik)\int_0^1 e^{-ik\cdot(sy+(1-s)y')}\dd s,
$$
and so
\begin{align*}
\left\|D\d_{y} - D\d_{y'} \right\|_{H^{-s-1}}^2 \le\frac{1}{(2\pi)^{2d}} |y-y'|^2\sum_{k\in\Z^d}\frac{|k|^4}{(1+|k|^2)^{s+1}}\le C|y-y'|^2,
\end{align*}
for some $C>0$ depending only on $d.$ Here, we also used the fact that $s$ satisfies \eqref{hyp:s}.

So, by fixing $(t,m_0)\in [0,T]\times H^s(\T^d)$, we find that $y\mapsto \nabla_y\frac{\d U}{\d m}(t,\cdot,m_0)(y)\in H^{-s}(\T^d)$ is a Lipschitz continuous function. Using similar reasoning, the same conclusion can be made for $y\mapsto D^2_{yy}\frac{\d U}{\d m}(t,\cdot,m_0)(y).$ 
\end{proof}

\section{Well-posedness of the master equation}

In this section we conclude by showing that the master equation \eqref{eq:master_main} has a unique solution. This proof closely follows the proof of  \cite[Theorem 2.4.2]{CarDelLasLio}.

\begin{proof}[Proof of Theorem \ref{thm:main}]
\smallskip

{\bf Existence.}

Let $t_0\in (0,T_{**})$ and $h\in(-1,1)$, $h\neq 0$ such that $t_0+h\in (0,T_{***})$. { Let $(u,m)$ be the solution to the MFG system \eqref{eq:mfg} initiated at some $m_0\in\sQ_R$ at $t_0$ and let $U$ be defined in \eqref{def:U}.} We have
\begin{align*}
\frac{U(t_0+h,x,m_0)-U(t_0,x,m_0)}{h}&=\frac{U(t_0+h,x,m_0)-U(t_0+h,x,m(t_0+h))}{h}\\
&+\frac{U(t_0+h,x,m(t_0+h))-U(t_0,x,m_0)}{h}
\end{align*}
Let us denote $m_s:=(1-s)m_0+s m(t_0+h)$. Now, we can write 
\begin{align*}
&U(t_0+h,x,m(t_0+h))-U(t_0+h,x,m_0)=\int_0^1\int_{\T^d}\frac{\d U}{\d m}(t_0+h,x,m_s)(y)(m(t_0+h,y)-m(t_0,y))\dd y\dd s\\
&=\int_0^1\int_{\T^d}\int_{t_0}^{t_0+h}\frac{\d U}{\d m}(t_0+h,x,m_s)(y)\partial_t m(t,y)\dd t\dd y\dd s\\
&=\int_0^1\int_{\T^d}\int_{t_0}^{t_0+h}\frac{\d U}{\d m}(t_0+h,x,m_s)(y)\left[\Delta m(t,y)+\diver(m(t,y)D_p\cH(t,y,Du(t,y),m(t,y)))\right]\dd t\dd y\dd s\\
%&=\int_0^1\int_{t_0}^{t_0+h}\int_{\T^d}\nabla_y\cdot[\nabla_w U](t_0+h,x,m_s)(y)m(t,y)\dd y\dd t\dd s\\
%&-\int_0^1\int_{t_0}^{t_0+h}\int_{\T^d}\nabla_w U(t_0+h,x,m_s)(y)\cdot D_p\cH(t,y,Du(t,y),m(t,y)))m(t,y)\dd y\dd t\dd s
\end{align*}

Now, by dividing this expression by $h$ and taking the limit as $h\to 0$, we find that  
\begin{align*}
&\lim_{h\to 0}\frac{U(t_0+h,x,m(t_0+h))-U(t_0+h,x,m_0)}{h}\\
&=\int_0^1\int_{\T^d}\frac{\d U}{\d m}(t_0,x,m_{t_0})(y)\left[\Delta m(t_0,y)+\diver(m(t_0,y)D_p\cH(t_0,y,Du(t_0,y),m(t_0,y)))\right]\dd y\dd s\\
&=\int_{\T^d}\frac{\d U}{\d m}(t_0,x,m_{t_0})(y)\left[\Delta m(t_0,y)+\diver(m(t_0,y)D_p\cH(t_0,y,Du(t_0,y),m(t_0,y)))\right]\dd y\\
&=\int_{\T^d}\nabla_y\cdot[\nabla_w U](t_0,x,m_0)(y)m_0(y)\dd y\\
&-\int_{\T^d}\nabla_w U(t_0,x,m_0)(y)\cdot D_p\cH(t_0,y,Du(t_0,y),m_0(y))m_0(y)\dd y.
\end{align*}
In the first equality, we have used the continuity of the integrand (see Theorem \ref{thm:U_reg}(iv)), while in the last equality,  we used the fact that $\frac{\delta U}{\delta m}(t_0,x,m_{t_0})(\cdot)\in H^s(\T^d)$ (see Theorem \ref{thm:U_reg}(ii)).

Second, by the time regularity of $u$ (see Corollary \ref{cor:diff_u_m}), we have 
\begin{align*}
U(t_0+h,x,m(t_0+h))-U(t_0,x,m_0)=u(t_0+h,x)-u(t_0,x) = h\partial_t u(t_0,x) + o(h).
\end{align*}
Therefore, we have
\begin{align*}
\lim_{h\to 0} \frac{U(t_0+h,x,m_0)-U(t_0,x,m_0)}{h}&=\partial_t u(t_0,x)\\
&-\int_{\T^d}\nabla_y\cdot[\nabla_w U](t_0,x,m_0)(y)m_0(y)\dd y\\
&+\int_{\T^d}\nabla_w U(t_0,x,m_0)(y)\cdot D_p\cH(t_0,y,Du(t_0,y),m_0(y))m_0(y)\dd y
\end{align*}
And since $u$ solves the first equation from \eqref{eq:mfg}, we get
\begin{align}\label{eq:master2}
-\partial_t U(t_0,x,m_0) &- \Delta U(t_0,x,m_0) + \cH(t,x,\nabla U,m)-\int_{\T^d}\nabla_y\cdot[\nabla_w U](t_0,x,m_0)(y)m_0(y)\dd y\\
\nonumber&+\int_{\T^d}\nabla_w U(t_0,x,m_0)(y)\cdot D_p\cH(t_0,y,\nabla U(t_0,y),m_0(y))m_0(y)\dd y=0.
\end{align}

\medskip

Let us remark that for $(t_0,m_0)\in[0,T]\times H^s(\T^d)$ fixed, by Theorem \ref{thm:U_reg}, the mapping
\be\label{eq:nonlocal}
x\mapsto -\int_{\T^d}\nabla_y\cdot[\nabla_w U](t_0,x,m_0)(y)m_0(y)\dd y
+\int_{\T^d}\nabla_w U(t_0,x,m_0)(y)\cdot D_p\cH(t_0,y,\nabla U(t_0,y),m_0(y))m_0(y)\dd y
\ee
has to be regarded a priori as an element of $H^{-s}(\T^d)$. However, by \eqref{eq:master2}, since 
$$x\mapsto -\partial_t U(t_0,x,m_0) - \Delta U(t_0,x,m_0) + \cH(t,x,\nabla U,m)$$ 
is continuous, the mapping in \eqref{eq:nonlocal} becomes also continuous. { Therefore, $U$ is indeed a solution to the master equation.}

\medskip

{\bf Uniqueness.} Let us suppose now that $V:[0,T]\times\T^d\times\sQ_R\to\R$ is another solution to the master equation. Let $m_0\in\sQ_R$. By the regularity of $V$, decreasing the time horizon $T>0$ if necessary, one has that the problem
\begin{align*}
\left\{
\begin{array}{ll}
\partial_t m - \Delta m - \diver (m D_p\cH (t,x,\nabla_x V(t,x,m), m))=0, & {\rm{in}}\ (0,T)\times\T^d,\\
m(t_0,\cdot)= m_0, & {\rm{in}}\ \T^d
\end{array}
\right.
\end{align*} 
has a classical solution. % of class $C^1([0,T]\times\T^d)$ {\color{blue}(should we  double check the $C^1$ regularity?)}. 

%Let us note that
%
%\begin{align*}
% \diver (m D_p\cH (t,x,\nabla_x V(t,x,m), m))&=\nabla m\cdot D_p\cH (t,x,\nabla_x V(t,x,m), m) +\\
% &+ m\left(\nabla_x\cdot D_p\cH+\sum_{i,j}\partial^2_{p_ip_j}\cH\partial_{x_ix_j}^2V+\partial_qD_{p}\cH\cdot\nabla m\right),
%\end{align*}
%where $\nabla_x\cdot D_p\cH$, $\partial^2_{p_ip_j}\cH$ and $\partial_qD_{p}\cH$ are clearly evaluated at $(t,x,\nabla_x V(t,x,m), m)$.

Now, for all $(t,x)\in[t_0,T]\times\T^d$, set $\tilde u(t,x):= V(t,x,m_t).$ By the regularity of $V$ and $m$ the computations below are justified:

\begin{align*}
\partial_t u(t,x)& = \partial_t V (t,x,m_t) + \int_{\T^d}\frac{\delta V}{\delta m}(t,x,m_t)(y)\partial_t m_t(y)\dd y\\
& = \partial_t V (t,x,m_t) +  \int_{\T^d}{\rm{div}}_y\nabla_w V(t,x,m_t)(y) m_t(y)\dd y\\
& -  \int_{\T^d} \nabla_w V(t,x,m_t)(y)\cdot D_p\cH(t,x,\nabla_x V,m_t) m_t(y)\dd y\\
& = \Delta_x V(t,x,m_t) - \cH(t,x,\nabla_x V(t,x,m_t),m_t)\\
& = \Delta_x \tilde u(t,x) - \cH(t,x,\nabla_x \tilde u(t,x),m_t),
\end{align*}
where in the penultimate equality we have used the fact that $V$ solves the master equation, while in the last equality, we have used the definition of $\tilde u$. Since by definition we also have $\tilde u(T,x)= G(x,m_T)$, we have that $(\tilde u,m)$ is a solution to the mean field game system. That system has a unique solution and therefore, we must have that $V(t_0,x,m_0) = U(t_0,x,m_0)$ for $m_0$ smooth. The uniqueness follows.
\end{proof}

\appendix

\section{A stability result}\label{sec:appendix_estimate}

For a probability density $m$, if we write $m=\bar{m}+\mu,$ with $\mu$ having zero mean (where $\bar m$ stands for the uniform measure on $\T^d$), then we introduce $\tilde{G}$
through  $\tilde{G}(\mu(x),x)=PG(m(x),x),$ where $P$ is the projection operator that removes the mean of a periodic function. In particular, we also have $Pm=\mu.$

Let us recall the Lipschitz continuity condition on $G$, assumed in \eqref{hyp:G-Lip}. For $m,\tilde m$ probability measures, note that $m-\tilde{m}=\mu-\tilde{\mu}.$
We notice that \eqref{hyp:G-Lip} immediately implies
\begin{equation}\nonumber
\|\tilde{G}(\mu_{1},\cdot)-\tilde{G}(\mu_{2},\cdot)\|_{1}^{2}
\leq\Upsilon\|\mu_{1}-\mu_{2}\|_{0}^{2}.
\end{equation}
This follows because $\|Pf\|_{H^{1}}^{2}\leq\|f\|_{H^{1}}^{2}$ for all $f.$

We are now able to state our stability theorem.
\begin{theorem}\label{uniquenessTheorem}
 Let $(u^{1},\bar{m}+\mu^{1})$ and $(u^2,\bar{m}+\mu^{2})$ be two classical solutions of 
\eqref{eq:mfg} on $(0,T)\times\T^d$ , with data $m^{1}(0,\cdot)=m^{1}_{0},$
$m^{2}(0,\cdot)=m^{2}_{0}.$
Assume that there exists $K$ such that the solutions are each bounded by $K:$
\begin{equation}\nonumber
\|Du^{i}\|_{H^{s-1}}+\|\mu^{i}\|_{H^{s-1}} \leq K,\qquad i\in\{1,2\},
\end{equation}
for some $s>2+\frac{d}{2}$ (which is ensured by Theorem \ref{thm:david_old} and the assumption \eqref{hyp:s}).  Assume that \eqref{hyp:H3_original} and \eqref{hyp:G-Lip} hold.  If $T$ is sufficiently small, then there exists a constant $c>0$ (depending on the data and $K$) such that
\begin{equation}\label{eq:stability}
\sup_{t\in[0,T]} \|u^{1}-u^{2}\|_{H^{1}}^{2}+\|m^{1}-m^{2}\|_{L^{2}}^{2}\leq c\|m^{1}_{0}-m^{2}_{0}\|_{L^{2}}^{2}.
\end{equation}
\end{theorem}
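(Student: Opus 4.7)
The plan is to run energy estimates at the level of the differences $w:=u^{1}-u^{2}$ and $\eta:=m^{1}-m^{2}=\mu^{1}-\mu^{2}$, in parallel to (but simpler than) the arguments used in Sections \ref{sec:step1} and \ref{sec:step2} for the linearized system. The coupling through the terminal condition, handled by \eqref{hyp:G-Lip}, is what will let us close the estimate.

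First, I would subtract the two copies of \eqref{eq:mfg}. Using \eqref{hyp:H3_original} and standard integral-mean-value representations, we can write
\begin{equation}\nonumber
\mathcal{H}(t,x,\nabla u^{1},m^{1})-\mathcal{H}(t,x,\nabla u^{2},m^{2})=A(t,x)\cdot\nabla w+B(t,x)\eta,
\end{equation}
\begin{equation}\nonumber
m^{1}D_{p}\mathcal{H}^{1}-m^{2}D_{p}\mathcal{H}^{2}=\eta D_{p}\mathcal{H}^{1}+m^{2}C(t,x)\nabla w+m^{2}D(t,x)\eta,
\end{equation}
where the coefficients $A,B,C,D$ are bounded (together with derivatives up to order $s-1$) by constants depending only on $K$, \eqref{hyp:H1_original} and \eqref{hyp:H3_original}. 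Thus $(w,\eta)$ solves a linear forward-backward parabolic system of the same structure as \eqref{eq:linearized}, with terminal datum
\begin{equation}\nonumber
w(T,\cdot)=G(\cdot,m^{1}_{T})-G(\cdot,m^{2}_{T}),\qquad\eta(0,\cdot)=m^{1}_{0}-m^{2}_{0},
\end{equation}
and crucially, by \eqref{hyp:G-Lip}, $\|w(T,\cdot)\|_{H^{1}}^{2}\le\Upsilon\|\eta(T,\cdot)\|_{L^{2}}^{2}$.

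Next, I would define the energies $E_{w}(t):=\tfrac12\|w(t,\cdot)\|_{H^{1}}^{2}$ and $E_{\eta}(t):=\tfrac12\|\eta(t,\cdot)\|_{L^{2}}^{2}$, differentiate in time and substitute from the equations. For $E_{w}$, the Laplacian term yields a parabolic gain $-\int_{t}^{T}\|\nabla w\|_{H^{1}}^{2}\,d\tau$. The term $\int\Lambda w\,\Lambda(A\cdot\nabla w)$ is handled by Young's inequality together with the product estimate \eqref{estim:Hr-Hs}. The delicate piece is $\int\Lambda w\,\Lambda(B\eta)$, where $\eta$ is only $L^{2}$: here I would integrate by parts to rewrite it as $\int w\,B\eta-\int\Delta w\,B\eta$, and then apply Cauchy--Schwarz plus Young's inequality with a small parameter $\sigma$, absorbing an $\|\Delta w\|_{L^{2}}^{2}$ piece into the parabolic gain and leaving a term of the form $(c/\sigma)\|\eta\|_{L^{2}}^{2}$. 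For $E_{\eta}$, the divergence form equation and integration by parts give $-\|\nabla\eta\|_{L^{2}}^{2}$; the term with $D_{p}\mathcal{H}^{1}$ becomes $\tfrac12\int\mathrm{div}(D_{p}\mathcal{H}^{1})\eta^{2}$ which is bounded by $c\|\eta\|_{L^{2}}^{2}$, while the terms involving $\nabla w$ and $\eta$ are controlled by Young's inequality and absorbed into the parabolic gains on both sides.

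Putting everything together, and using $\|w(T,\cdot)\|_{H^{1}}^{2}\le\Upsilon\|\eta(T,\cdot)\|_{L^{2}}^{2}\le\Upsilon\sup_{[0,T]}\|\eta\|_{L^{2}}^{2}$, I would arrive at an inequality of the schematic form
\begin{equation}\nonumber
\sup_{[0,T]}\|w\|_{H^{1}}^{2}+\sup_{[0,T]}\|\eta\|_{L^{2}}^{2}+\int_{0}^{T}\!\|\nabla w\|_{H^{1}}^{2}+\|\nabla\eta\|_{L^{2}}^{2}\,d\tau\le c\|m^{1}_{0}-m^{2}_{0}\|_{L^{2}}^{2}+cT\Big(\sup_{[0,T]}\|w\|_{H^{1}}^{2}+\sup_{[0,T]}\|\eta\|_{L^{2}}^{2}\Big),
\end{equation}
after choosing the Young parameters to absorb the interior $\sigma$-terms into the parabolic gains. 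Taking $T$ small enough to absorb the last bracket yields \eqref{eq:stability}. The main technical obstacle is the treatment of the mixed term $\int\Lambda w\,\Lambda(B\eta)$: without the integration-by-parts trick one would need $\eta\in H^{1}$ uniformly, which is false for initial data only in $L^{2}$, so it is essential to trade one derivative off $\eta$ onto $w$ and spend it against the parabolic gain on the $w$ side.
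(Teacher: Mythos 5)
Your overall strategy — energy estimates on the differences $w=u^1-u^2$, $\eta=m^1-m^2$ at the level $H^1\times L^2$, with the terminal coupling closed via \eqref{hyp:G-Lip} and $T$ small — is the same as the paper's. However, there are two genuine technical differences worth noting, and one gap in the way you close the argument.

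\textbf{Where you differ from the paper, to your advantage.} The paper first projects out the mean, sets $w^i=Pu^i$, works with $E_w=\tfrac12\|\nabla(w^1-w^2)\|_{L^2}^2$, and must estimate the mean of $u^1-u^2$ separately at the end; by taking $E_w=\tfrac12\|w\|_{H^1}^2$ directly you fold the mean into the energy and skip that step entirely. More importantly, your observation that $\int\Lambda w\,\Lambda(B\eta)=\int wB\eta-\int\Delta w\,B\eta$ (equivalently, moving $\Lambda^2=1-\Delta$ onto $w$) trades the derivative onto $w$ rather than onto $\eta$, so the resulting $\|\Delta w\|_{L^2}^2$ is absorbed by the \emph{same-side} parabolic gain $\int_t^T\|\nabla w\|_{H^1}^2\,d\tau$. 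Analogously, on the $\eta$-side you integrate the full divergence by parts, getting $\int\nabla\eta\cdot(\cdots)$ with only $\nabla w$ and $\eta$ inside, again absorbable on the same side $\int_0^t\|\nabla\eta\|_{L^2}^2\,d\tau$. By contrast, the paper expands the divergence in the $E_\mu$ equation, producing terms ($V_9,V_{14}$) with $D^2(w^1-w^2)$, and a term ($W^j_4$) with $\nabla(\mu^1-\mu^2)$ on the $E_w$ side; these can only be absorbed by the parabolic gain from the \emph{other} equation, whose time integral is on $[t,T]$ rather than $[0,t]$ (or vice versa). That mismatch forces the paper into the forward--backward absorption workaround (steps \eqref{workaroundParabolic1}--\eqref{workaroundParabolic3}) and the $\varpi$-weighted energy with $\varpi=\frac{1}{8\Upsilon}$. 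Your trick sidesteps this entirely, which is a real simplification.

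\textbf{Where your sketch is imprecise.} Your final schematic inequality, $\sup E_w+\sup E_\eta+\int_0^T(\ldots)\le c\|m_0^1-m_0^2\|_{L^2}^2+cT(\ldots)$, cannot be literally correct as written: after plugging in $E_w(T)\le\Upsilon\sup_{[0,T]}E_\eta$, the term $\Upsilon\sup E_\eta$ sits on the right-hand side of the $E_w$ estimate with a fixed constant $\Upsilon$, not a factor of $T$. If $\Upsilon\ge 1$ one cannot directly absorb it into the left-hand $\sup E_\eta$. The argument still closes, but you must make one extra move explicit: first establish $\sup E_\eta\le E_\eta(0)+cT\sup(E_w+E_\eta)$ from the $\eta$-estimate alone, then substitute into the $w$-estimate to get $\sup E_w\le\Upsilon E_\eta(0)+c(1+\Upsilon)T\sup(E_w+E_\eta)$, and finally add; $T$ small now absorbs the remaining bracket and the constant in \eqref{eq:stability} picks up a factor $\Upsilon+1$. (This is what the paper's $\varpi$-weighting accomplishes; your route is cleaner but must say so.) Separately, your phrase \emph{``absorbed into the parabolic gains on both sides''} in the $E_\eta$ paragraph reads as if you intend cross-side absorption; to keep your simplification, make clear that the $\|\nabla w\|_{L^2}^2$ appearing there is bounded by $2E_w$ itself (and thus gives a $cT\sup E_w$ contribution), \emph{not} by the $E_w$ parabolic gain.
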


\begin{proof}
We use the notation $w^i:=P u^i$ and we notice that $D w^i=D u^i$. We also use the notation 
$$\Theta(t,x,Dw,\mu):=-\cH(t,x,Du,m).$$ %(t,x,Dw^1,\mu^1)
Let us notice that \eqref{eq:mfg} yields

\begin{equation}\label{eq:mu_w}
\left\{
\begin{array}{ll}
\partial_t w^i + \Delta w^i +P\Theta(t,x,D w^i,\mu^i)=0, & (t,x)\in(0, T)\times\T^d,\\[7pt]
\partial_t \mu^i - \Delta \mu^i + \diver(\mu^i D_p\Theta(t,x,D w^i,\mu^i))+\bar m\diver(D_p\Theta(t,x,D w^i,\mu^i))=0, & (t,x)\in(0,T)\times\T^d,\\[7pt]
\mu^i(t_0,x)=m_0(x)-\bar m,\ \ w^i(\tilde T,x)=\tilde G(x,\mu^i_{T}(x)), & x\in\T^d
\end{array}
\right.
\end{equation}

We define an energy $E=E_{\mu}+E_{w},$ by
\begin{equation}\nonumber
E_{\mu}=\frac{1}{2}\int_{\mathbb{T}^{d}}(\mu^{1}-\mu^{2})^{2}\ dx,
\end{equation}
\begin{equation}\nonumber
E_{w}=\frac{1}{2}\sum_{j=1}^{d}\int_{\mathbb{T}^{d}}(\partial_{x_{j}}w^{1}-\partial_{x_{j}}w^{2})^{2}\ dx.
\end{equation}
We will first bound $E_{\mu}$ and $E_{w}.$  To finish the argument, we will then only need to bound the mean of 
$u_{1}-u_{2}.$

To estimate the growth of this energy, we begin by taking the following time derivative:
\begin{equation}\nonumber
\frac{dE_{\mu}}{dt}=\int_{\mathbb{T}^{d}}(\mu^{1}-\mu^{2})(\partial_t\mu^{1}-\partial_t\mu^{2})\ dx.
\end{equation}
Substituting for $\mu^{1}_{t}$ and $\mu^{2}_{t}$ from \eqref{eq:mu_w}, and then adding 
and subtracting, we arrive at
\begin{align*}%\nonumber
\frac{dE_{\mu}}{dt}&=\int_{\mathbb{T}^{d}}(\mu^{1}-\mu^{2})\Delta(\mu^{1}-\mu^{2}) dx
\\
&- \int_{\mathbb{T}^{d}}(\mu^{1}-\mu^{2})\mathrm{div}\left(
(\mu^{1}-\mu^{2})D_p\Theta(t,x,Dw^1,\mu^1)\right) dx
\\
&- \int_{\mathbb{T}^{d}}(\mu^{1}-\mu^{2})\mathrm{div}\left(
\mu^{2}\left(D_p\Theta(t,x,Dw^1,\mu^1)-D_p\Theta(t,x,Dw^2,\mu^2)\right)\right) dx
\\
&- \bar{m}\int_{\mathbb{T}^{d}}(\mu^{1}-\mu^{2})\mathrm{div}\left(
D_p\Theta(t,x,Dw^1,\mu^1)-D_p\Theta(t,x,Dw^2,\mu^2)\right)dx.
\end{align*}
We expand the derivatives on the right-hand side, introducing the notation
\begin{equation}\nonumber
\frac{dE_{\mu}}{dt}=\sum_{\ell=1}^{14}V_{\ell},
\end{equation}
where the summands are given by the following expressions:
\begin{equation}\nonumber
V_{1}=\int_{\mathbb{T}^{d}}(\mu^{1}-\mu^{2})\Delta(\mu^{1}-\mu^{2})\ dx,
\end{equation}
\begin{equation}\nonumber
V_{2}=- \int_{\mathbb{T}^{d}}(\mu^{1}-\mu^{2})\left(\nabla\left(\mu^{1}-\mu^{2}\right)\right)\cdot
D_p\Theta(t,x,Dw^1,\mu^1)\ dx,
\end{equation}
\begin{equation}\nonumber
V_{3}=- \int_{\mathbb{T}^{d}}\left(\mu^{1}-\mu^{2}\right)^{2}
\mathrm{div}\left(D_p\Theta(t,x,Dw^1,\mu^1)\right)\ dx,
\end{equation}
\begin{equation}\nonumber
V_{4}=- \int_{\mathbb{T}^{d}}(\mu^{1}-\mu^{2})(\nabla\mu^{2})\cdot
\left(D_p\Theta(t,x,Dw^1,\mu^1)-D_p\Theta(t,x,Dw^2,\mu^2)\right)\ dx,
\end{equation}
\begin{equation}\nonumber
V_{5}=- \int_{\mathbb{T}^{d}}(\mu^{1}-\mu^{2})(\mu^{2})\sum_{i=1}^{d}\left[
\Theta_{p_{i}x_{i}}(t,x,Dw^1,\mu^1)-\Theta_{p_{i}x_{i}}(t,x,Dw^2,\mu^2)\right]\ dx,
\end{equation}
\begin{equation}\nonumber
V_{6}=- \int_{\mathbb{T}^{d}}(\mu^{1}-\mu^{2})(\mu^{2})\sum_{i=1}^{d}
\left[\Theta_{p_{i}q}(t,x,Dw^1,\mu^1)\frac{\partial\mu^{1}}{\partial x_{i}}
-\Theta_{p_{i}q}(t,x,Dw^2,\mu^2)\frac{\partial\mu^{1}}{\partial x_{i}}\right]\ dx,
\end{equation}
\begin{equation}\nonumber
V_{7}=- \int_{\mathbb{T}^{d}}(\mu^{1}-\mu^{2})(\mu^{2})\sum_{i=1}^{d}\left[
\Theta_{p_{i}q}(t,x,Dw^2,\mu^2)\left(\frac{\partial(\mu^{1}-\mu^{2})}{\partial x_{i}}\right)
\right]\ dx,
\end{equation}
\begin{align*}\nonumber
V_{8}=- \int_{\mathbb{T}^{d}}(\mu^{1}-\mu^{2})(\mu^{2})\sum_{i=1}^{d}\sum_{j=1}^{d}\Bigg[
\Theta_{p_{i}p_{j}}(t,x,Dw^1,\mu^1)\frac{\partial^{2}w^{1}}{\partial x_{i}\partial x_{j}}
-\Theta_{p_{i}p_{j}}(t,x,Dw^2,\mu^2)\frac{\partial^{2}w^{1}}{\partial x_{i}\partial x_{j}}
\Bigg]\ dx,
\end{align*}
\begin{equation}\nonumber
V_{9}=- \int_{\mathbb{T}^{d}}(\mu^{1}-\mu^{2})(\mu^{2})\sum_{i=1}^{d}\sum_{j=1}^{d}\left[
\Theta_{p_{i}p_{j}}(t,x,Dw^2,\mu^2)\left(\frac{\partial^{2}(w^{1}-w^{2})}{\partial x_{i}\partial x_{j}}\right)
\right]\ dx,
\end{equation}
\begin{equation}\nonumber
V_{10}=- \bar{m}\int_{\mathbb{T}^{d}}(\mu^{1}-\mu^{2})\sum_{i=1}^{d}\left[
\Theta_{p_{i}x_{i}}(t,x,Dw^1,\mu^1)-\Theta_{p_{i}x_{i}}(t,x,Dw^2,\mu^2)\right]\ dx,
\end{equation}
\begin{equation}\nonumber
V_{11}=- \bar{m}\int_{\mathbb{T}^{d}}(\mu^{1}-\mu^{2})\sum_{i=1}^{d}\left[
\Theta_{p_{i}q}(t,x,Dw^1,\mu^1)\frac{\partial\mu^{1}}{\partial x_{i}}
-\Theta_{p_{i}q}(t,x,\mu^{2},Dw^{1})\frac{\partial\mu^{1}}{\partial x_{i}}
\right]\ dx,
\end{equation}
\begin{equation}\nonumber
V_{12}=- \bar{m}\int_{\mathbb{T}^{d}}(\mu^{1}-\mu^{2})\sum_{i=1}^{d}
\Theta_{p_{i}q}(t,x,Dw^2,\mu^2)
\left(\frac{\partial(\mu^{1}-\mu^{2})}{\partial x_{i}}\right)
\ dx,
\end{equation}
\begin{align*}\nonumber
V_{13}=- \bar{m}\int_{\mathbb{T}^{d}}(\mu^{1}-\mu^{2})\sum_{i=1}^{d}\sum_{j=1}^{d}\Bigg[
\Theta_{p_{i}p_{j}}(t,x,Dw^1,\mu^1)\frac{\partial^{2}w^{1}}{\partial x_{i}\partial x_{j}}
-\Theta_{p_{i}p_{j}}(t,x,Dw^2,\mu^2)\frac{\partial^{2}w^{1}}{\partial x_{i}\partial x_{j}}
\Bigg]\ dx,
\end{align*}
\begin{equation}\nonumber
V_{14}=- \bar{m}\int_{\mathbb{T}^{d}}(\mu^{1}-\mu^{2})\sum_{i=1}^{d}\sum_{j=1}^{d}\left[
\Theta_{p_{i}p_{j}}(t,x,Dw^2,\mu^2)
\left(\frac{\partial^{2}(w^{1}-w^{2})}{\partial x_{i}\partial x_{j}}\right)
\right]\ dx.
\end{equation}

We integrate four of these terms by parts.  For $V_{1}$ we arrive at
\begin{equation}\label{V1ByParts}
V_{1}=-\int_{\mathbb{T}^{d}}\left|\nabla\left(\mu^{1}-\mu^{2}\right)\right|^{2}\ dx.
\end{equation}
Continuing to integrate by parts, we have
\begin{equation}\nonumber
V_{2}=\frac{1}{2}\int_{\mathbb{T}^{d}}(\mu^{1}-\mu^{2})^{2}
\mathrm{div}\left(D_p\Theta(t,x,Dw^1,\mu^1)\right)\ dx,
\end{equation}
\begin{equation}\nonumber
V_{7}=\frac{1}{2}\sum_{i=1}^{d}\int_{\mathbb{T}^{d}}(\mu^{1}-\mu^{2})^{2}\frac{\partial}{\partial x_{i}}
\left((\mu^{2})\Theta_{p_{i}q}(t,x,Dw^1,\mu^1)\right)\ dx,
\end{equation}
\begin{equation}\nonumber
V_{12}=\frac{ \bar{m}}{2}\sum_{i=1}^{d}\int_{\mathbb{T}^{d}}(\mu^{1}-\mu^{2})^{2}
\frac{\partial}{\partial x_{i}}\left(\Theta_{p_{i}q}(t,x,Dw^1,\mu^1)\right)\ dx.
\end{equation}
These terms, and also $V_{3},$ are then bounded in terms of the energy, using the bound on the solutions in terms of $K.$
We introduce a  nondecreasing function $\mathcal{G}$ such that
\begin{equation}\label{V-2-3-7-and-12}
V_{2}+V_{3}+V_{7}+V_{12}\leq   \mathcal{G}(K) E_{\mu}.
\end{equation}
Note that the we have estimated, for instance, $\mu^{2}$ in $L^{\infty}$ here, which we may do since $s$ is large enough.
Such estimates will be made several times in the rest of this proof.

For most of the remaining $V_{i}$ terms, we may estimate them using Lipschitz properties of $D_p\Theta$ and its 
derivatives; we then have 
\begin{equation}\label{V-many}
V_{4}+V_{5}+V_{6}+V_{8}+V_{10}+V_{11}+V_{13}
\leq   \mathcal{G}(K)(E_{\mu}+E_{\mu}^{1/2}E_{w}^{1/2}),
\end{equation}
where $\mathcal{G}(K)$ is as before.

For the final two terms, $V_{9}$ and $V_{14},$  we use Young's inequality with an eye toward bounding them
by a beneficial term arising from $E_{w}.$
For $V_{9},$ we bound $\Theta_{p_{i}p_{j}}$ and $\mu^{2}$ with $\mathcal{G}(K):$
\begin{equation}\nonumber
V_{9}\leq  \mathcal{G}(K)\sum_{i=1}^{d}\sum_{j=1}^{d}\int_{\mathbb{T}^{d}}(\mu^{1}-\mu^{2})
\frac{\partial^{2}(w^{1}-w^{2})}{\partial_{x_{i}}\partial_{x_{j}}}\ dx.
\end{equation}
We introduce a constant $\varpi=\frac{1}{8\Upsilon}.$
We then apply Young's inequality, with parameter $4  \mathcal{G}(K)/\varpi:$
\begin{multline}\label{V-9}
V_{9} \leq \mathcal{G}(K)\int_{\mathbb{T}^{d}}(\mu^{1}-\mu^{2})^{2}\ dx
+\frac{1}{8}\sum_{j=1}^{d}\int_{\mathbb{T}^{d}}(\partial_{x_{j}}(Dw^{1}-Dw^{2}))^{2}\ dx
\\
\leq \mathcal{G}(K)E_{\mu}
+\frac{\varpi}{8}\sum_{j=1}^{d}\int_{\mathbb{T}^{d}}(\partial_{x_{j}}(Dw^{1}-Dw^{2}))^{2}\ dx.
\end{multline}
Notice that in the first term on the right-hand side, we have simply incorporated $\varpi$ into $\mathcal{G}.$
The final term, $V_{14},$ is then entirely similar:
\begin{equation}\label{V-14}
V_{14}\leq 
\mathcal{G}(K)E_{\mu}
+\frac{\varpi}{8}\sum_{j=1}^{d}\int_{\mathbb{T}^{d}}(\partial_{x_{j}}(Dw^{1}-Dw^{2}))^{2}\ dx.
\end{equation}
%Adding these results for $V_{9}$ and $V_{14},$ we have
%\begin{equation}\label{V-9-and-14}
%V_{9}+V_{14}\leq (\mathcal{G}_{1}(K)+\mathcal{G}_{2}(K))E_{\mu}
%+\frac{\varpi}{4}\sum_{j=1}^{d}\int_{\mathbb{T}^{d}}(\partial_{x_{j}}(Dw^{1}-Dw^{2}))^{2}\ dx.
%\end{equation}

We add the bounds \eqref{V-2-3-7-and-12}, \eqref{V-many}, \eqref{V-9}, and
\eqref{V-14}, finding the following conclusion:
\begin{equation}\label{sumOfMostVTerms}
\sum_{\ell=2}^{14}V_{\ell}\leq  \mathcal{G}(K)(E_{w}+E_{\mu}) + 
\frac{\varpi}{4}\sum_{j=1}^{d}\int_{\mathbb{T}^{d}}(\partial_{x_{j}}(Dw^{1}-Dw^{2}))^{2}\ dx.
\end{equation}

We next treat $E_{w},$ introducing the notation
$E_{w}=\sum_{j=1}^{d}E^{j}_{w},$ 
with
\begin{equation}\nonumber
\frac{dE^{j}_{w}}{dt}=\int_{\mathbb{T}^{d}}(\partial_{x_{j}}w^{1}-\partial_{x_{j}}w^{2})
\partial_{t}(\partial_{x_{j}}w^{1}-\partial_{x_{j}}w^{2})\ dx.
\end{equation}
We further decompose each of these time derivatives as
\begin{equation}\nonumber
\frac{dE^{j}_{w}}{dt}=\sum_{\ell=1}^{6}W^{j}_{\ell},
\end{equation}
where the $W^{j}_{i}$ are given by
\begin{equation}\nonumber
W^{j}_{1}=-\int_{\mathbb{T}^{d}}(\partial_{x_{j}}w^{1}-\partial_{x_{j}}w^{2})
\Delta(\partial_{x_{j}}w^{1}-\partial_{x_{j}}w^{2})\ dx,
\end{equation}
\begin{equation}\nonumber
W^{j}_{2}=- \int_{\mathbb{T}^{d}}(\partial_{x_{j}}w^{1}-\partial_{x_{j}}w_{2})
\left(\Theta_{x_{j}}(t,x,Dw^1,\mu^1)-\Theta_{x_{j}}(t,x,Dw^2,\mu^2)\right)\ dx,
\end{equation}
\begin{equation}\nonumber
W^{j}_{3}=- \int_{\mathbb{T}^{d}}(\partial_{x_{j}}w^{1}-\partial_{x_{j}}w^{2})
\left(\Theta_{q}(t,x,Dw^1,\mu^1)\mu^{1}_{x_{j}}-\Theta_{q}(t,x,Dw^2,\mu^2)\mu^{1}_{x_{j}}\right)\ dx,
\end{equation}
\begin{equation}\nonumber
W^{j}_{4}=- \int_{\mathbb{T}^{d}}(\partial_{x_{j}}w^{1}-\partial_{x_{j}}w^{2})
\left(D_p\Theta(t,x,Dw^2,\mu^2)\mu^{1}_{x_{j}}-\Theta_{q}(t,x,Dw^2,\mu^2)\mu^{2}_{x_{j}}\right)\ dx,
\end{equation}
\begin{equation}\nonumber
W^{j}_{5}=- \sum_{i=1}^{d}\int_{\mathbb{T}^{d}}(\partial_{x_{j}}w^{1}-\partial_{x_{j}}w^{2})
\left(\Theta_{p_{i}}(t,x,Dw^1,\mu^1)\partial^{2}_{x_{i}x_{j}}w^{1}
-\Theta_{p_{i}}(t,x,Dw^2,\mu^2)\partial^{2}_{x_{i}x_{j}}w^{1}\right)\ dx,
\end{equation}
\begin{equation}\nonumber
W^{j}_{6}=- \sum_{i=1}^{d}\int_{\mathbb{T}^{d}}(\partial_{x_{j}}w^{1}-\partial_{x_{j}}w^{2})
\left(\Theta_{p_{i}}(t,x,Dw^2,\mu^2)\partial^{2}_{x_{i}x_{j}}w^{1}
-\Theta_{p_{i}}(t,x,Dw^2,\mu^2)\partial^{2}_{x_{i}x_{j}}w^{2}\right)\ dx.
\end{equation}

As before, we begin by integrating some of the terms by parts.  Integrating $W^{j}_{1}$ by parts, we have
\begin{equation}\nonumber
W^{j}_{1}=\int_{\mathbb{T}^{d}}\left|\nabla\partial_{x_{j}}\left(w^{1}-w^{2}\right)\right|^{2}\ dx.
\end{equation}
We also have the following, upon integrating $W_{6}^{j}$ by parts:
\begin{equation}\nonumber
W_{6}^{j}=\frac{1}{2}\sum_{i=1}^{d}\int_{\mathbb{T}^{d}}
(\partial_{x_{j}}w^{1}-\partial_{x_{j}}w^{2})^{2}\partial_{x_{i}}\left(\Theta_{p_{i}}(t,x,Dw^2,\mu^2)\right)\ dx.
\end{equation}

With $\mathcal{G}$ as before, we may then estimate several of the terms:
\begin{equation}\label{W-2-3-5-and-6}
W_{2}^{j}+W_{3}^{j}+W_{5}^{j}+W_{6}^{j}
\leq
  \mathcal{G}(K)(E_{w}+E_{w}^{1/2}E_{\mu}^{1/2}),
\end{equation}

The only remaining term to estimate is $W_{4}^{j}.$    We first bound $D_p\Theta(t,x,Dw^2,\mu^2)$
in $L^{\infty}$ using $\mathcal{G}(K),$ arriving at
\begin{equation}\nonumber
W_{4}^{j}\leq   \mathcal{G}(K)\int_{\mathbb{T}^{d}}(\partial_{x_{j}}w^{1}-\partial_{x_{j}}w^{2})
\partial_{x_{j}}(\mu^{1}-\mu^{2})\ dx.
\end{equation}
Applying Young's inequality with $2  \mathcal{G}(K)$ as the parameter, we have
\begin{equation}\nonumber
W_{4}^{j}\leq
 \mathcal{G}(K)E_{w} 
+ \frac{1}{4}\int_{\mathbb{T}^{d}}(\partial_{x_{j}}\mu^{1}-\partial_{x_{j}}\mu^{2})^{2}\ dx.
\end{equation}

Adding  \eqref{W-2-3-5-and-6} to this yields
\begin{equation}\label{sumOfMostWTerms+V1}
\sum_{\ell=2}^{6}\sum_{j=1}^{d}W_{\ell}^{j}
\leq  \mathcal{G}(K)(E_{w}+E_{\mu})
+\frac{1}{4}\int_{\mathbb{T}^{d}}|D\mu^{1}-D\mu^{2}|^{2}\ dx.
\end{equation}

We are now in a position to integrate $\frac{dE_{\mu}}{dt}$ with respect to time over the interval $[0,t];$ doing so, 
we arrive at
\begin{equation}\nonumber
E_{\mu}(t)=E_{\mu}(0)+\int_{0}^{t}\sum_{\ell=1}^{14}V_{\ell}.
\end{equation}
Applying \eqref{sumOfMostVTerms} then yields
\begin{align*}%\nonumber
E_{\mu}(t)\leq E_{\mu}(0)+  T \mathcal{G}(K)(E_{\mu}+E_{w})
+\int_{0}^{t}\left[V_{1}
+\frac{\varpi}{4}\sum_{j=1}^{d}\int_{\mathbb{T}^{d}}(\partial_{x_{j}}(Dw^{1}-Dw^{2}))^{2}\ dx\right]\ d\tau.
\end{align*}
We similarly integrate $\frac{dE_{w}}{dt}$ over the temporal interval $[t,T],$ finding
\begin{equation}\nonumber
E_{w}(t)=E_{w}(T)-\int_{t}^{T}\sum_{j=1}^{d}\sum_{\ell=1}^{6}W^{j}_{\ell}\ d\tau.
\end{equation}
Applying \eqref{sumOfMostWTerms+V1}, we find
\begin{equation}\label{almostVarpi}
E_{w}(t)\leq E_{w}(T)
+  T\mathcal{G}(K)(E_{w}+E_{\mu})
-\sum_{j=1}^{d}\int_{t}^{T}W_{1}^{j}\ d\tau
+\frac{1}{4}\int_{t}^{T}\int_{\mathbb{T}^{d}}|D\mu^{1}-D\mu^{2}|^{2}\ dxd\tau.
\end{equation}

We work now with the term $E_{w}(T).$  A formula for this is
\begin{equation}\nonumber
E_{w}(T)=\sum_{j}\|\partial_{x_{j}}w_{1}(T,\cdot)-\partial_{x_{j}}w_{2}(T,\cdot)\|_{0}^{2}.
\end{equation}
Using the payoff function, $\tilde{G},$ this can be estimated as
\begin{equation}\nonumber
E_{w}(T)\leq\|\tilde{G}(\mu_{1}(T,\cdot),\cdot)-\tilde{G}(\mu_{2}(T,\cdot),\cdot)\|_{1}^{2}.
\end{equation}
Using our smoothing assumption on $\tilde{G},$ then, this may again be estimated as
\begin{equation}\nonumber
E_{w}(T)\leq\Upsilon\|\mu_{1}(T,\cdot)-\mu_{2}(T,\cdot)\|_{0}^{2}.
\end{equation}
We may replace this with a supremum, as
\begin{equation}\nonumber
E_{w}(T)\leq \Upsilon\sup_{t\in[0,T]}E_{\mu}(t).
\end{equation}

We  multiply \eqref{almostVarpi} by $\varpi>0,$ apply the definitions of $V_{1}$ and $W^{j}_{1},$
and summarize what we have found thus far:
\begin{multline}\label{almostDoneUniqueness}
\varpi E_{w}(t)+E_{\mu}(t)+\int_{0}^{t}\|D\mu^{1}-D\mu^{2}\|_{0}^{2}\ d\tau
+\varpi\int_{t}^{T}\|D^{2}w^{1}-D^{2}w^{2}\|_{0}^{2}\ d\tau
\\
\leq
\varpi\Upsilon\sup_{t\in[0,T]}E_{\mu}(t)
+E_{\mu}(0)+  T\mathcal{G}(K)(E_{w}(t)+E_{\mu}(t))
\\
+\frac{\varpi}{4}\int_{t}^{T}\|D\mu^{1}-D\mu^{2}\|_{0}^{2}\ d\tau
+\frac{\varpi}{4}\int_{0}^{t}\|D^{2}w^{1}-D^{2}w^{2}\|_{0}^{2}\ d\tau.
\end{multline}
We can bound the integrals appearing on the right-hand side by taking a larger domain of integration:
\begin{multline}\label{almostDoneUniqueness2}
\varpi E_{w}(t)+E_{\mu}(t)+\int_{0}^{t}\|D\mu^{1}-D\mu^{2}\|_{0}^{2}\ d\tau
+\varpi\int_{t}^{T}\|D^{2}w^{1}-D^{2}w^{2}\|_{0}^{2}\ d\tau
\\
\leq
\varpi\Upsilon\sup_{t\in[0,T]}E_{\mu}(t)
+E_{\mu}(0)+  T\mathcal{G}(K)(E_{w}(t)+E_{\mu}(t))
\\
+\frac{\varpi}{4}\int_{0}^{T}\|D\mu^{1}-D\mu^{2}\|_{0}^{2}\ d\tau
+\frac{\varpi}{4}\int_{0}^{T}\|D^{2}w^{1}-D^{2}w^{2}\|_{0}^{2}\ d\tau.
\end{multline}
Considering the first integral on the left-hand side of \eqref{almostDoneUniqueness2}, we find
\begin{multline}\label{workaroundParabolic1}
\int_{0}^{t}\|D\mu^{1}-D\mu^{2}\|_{0}^{2}\ d\tau
\leq
\varpi\Upsilon\sup_{t\in[0,T]}E_{\mu}(t)
+E_{\mu}(0)+  T\mathcal{G}(K)(E_{w}(t)+E_{\mu}(t))
\\
+\frac{\varpi}{4}\int_{0}^{T}\|D\mu^{1}-D\mu^{2}\|_{0}^{2}\ d\tau
+\frac{\varpi}{4}\int_{0}^{T}\|D^{2}w^{1}-D^{2}w^{2}\|_{0}^{2}\ d\tau.
\end{multline}
Taking the supremum with respect to $t$ on both sides of \eqref{workaroundParabolic1}, we have
\begin{multline}\label{workaroundParabolic2}
\int_{0}^{T}\|D\mu^{1}-D\mu^{2}\|_{0}^{2}\ d\tau
\leq
\varpi\Upsilon\sup_{t\in[0,T]}E_{\mu}(t)+E_{\mu}(0)
+  T\mathcal{G}(K)\left(\sup_{t\in[0,T]}(E_{w}(t)+E_{\mu}(t))\right)
\\
+\frac{\varpi}{4}\int_{0}^{T}\|D\mu^{1}-D\mu^{2}\|_{0}^{2}\ d\tau
+\frac{\varpi}{4}\int_{0}^{T}\|D^{2}w^{1}-D^{2}w^{2}\|_{0}^{2}\ d\tau.
\end{multline}
Now considering instead the second integral on the left-hand side of \eqref{almostDoneUniqueness2}, making the
corresponding manipulations, we have
\begin{multline}\label{workaroundParabolic3}
\varpi\int_{0}^{T}\|D^{2}w^{1}-D^{2}w^{2}\|_{0}^{2}\ d\tau
\leq
\varpi\Upsilon\sup_{t\in[0,T]}E_{\mu}(t)+E_{\mu}(0)
+  T\mathcal{G}(K)\left(\sup_{t\in[0,T]}(E_{w}(t)+E_{\mu}(t))\right)
\\
+\frac{\varpi}{4}\int_{0}^{T}\|D\mu^{1}-D\mu^{2}\|_{0}^{2}\ d\tau
+\frac{\varpi}{4}\int_{0}^{T}\|D^{2}w^{1}-D^{2}w^{2}\|_{0}^{2}\ d\tau.
\end{multline}
We assume that $\varpi$ satisfies $\varpi<1.$
Adding \eqref{workaroundParabolic2} and \eqref{workaroundParabolic3} and rearranging, 
we have
\begin{multline}\nonumber
\int_{0}^{T}\|D\mu^{1}-D\mu^{2}\|_{0}^{2}\ d\tau
+\varpi\int_{0}^{T}\|D^{2}w^{1}-D^{2}w^{2}\|_{0}^{2}\ d\tau
\\
\leq 4E_{\mu}(0)+4\varpi\Upsilon E_{\mu}(0)
+  T\mathcal{G}(K)\left(\sup_{t\in[0,T]}(E_{w}(t)+E_{\mu}(t))\right).
\end{multline}
Combining this with \eqref{almostDoneUniqueness2}, and making some adjustments of factors of $\varpi,$ we have
\begin{multline}\label{finalUniqueness}
\sup_{t\in[0,T]}(\varpi E_{w}(t)+E_{\mu}(t))
\leq 2E_{\mu}(0)
+2\varpi\Upsilon\sup_{t\in[0,T]}\left(\varpi E_{w}(t)+E_{\mu}(t)\right)
\\
+  T\mathcal{G}(K)\left(\sup_{t\in[0,T]}(\varpi E_{w}(t)+E_{\mu}(t))\right).
\end{multline}

Recall our choice $\varpi=\frac{1}{8\Upsilon}.$
We take 
$  T \mathcal{G}(K)<\frac{1}{4}$ (which may be accomplished 
simply by taking $T$ sufficiently small), concluding
\begin{equation}\nonumber
\sup_{t\in[0,T]}\left(E_{\mu}(t)+\varpi E_{w}(t)\right)\leq 8E_{\mu}(0)=8\|m^{1}(0)-m^{2}(0)\|_{0}^{2}.
\end{equation}
We may eliminate the factor of $\varpi$ on the left, resulting in
\begin{equation}\label{allButTheMean}
\sup_{t\in[0,T]}\left(E_{\mu}(t)+E_{w}(t)\right)\leq 64\Upsilon\|m^{1}(0)-m^{2}(0)\|_{0}^{2}.
\end{equation}
Let us notice that the estimate on the $\mu$ variables directly translates to the estimate on the $m$ variables (since $m^1-m^2=\mu^1=\mu^2$). Therefore, to conclude with the estimate on the $u$ variables from the estimate on the $w$ variables, it remains to estimate the difference of the averages of the $u$ variables.

The equation for $u$ is
\begin{equation}\nonumber
\partial_{t}u=-\Delta u + \cH(t,x,\nabla u, m).
\end{equation}
Integrating this over the entire spatial domain, we have
\begin{equation}\nonumber
\partial_{t}\left(\int_{\mathbb{T}^{d}}u\ dx\right)=\int_{\mathbb{T}^{d}}\cH(t,x,\nabla u,m)\ dx.
\end{equation}
Let $(u_{1},m_{1})$ and $(u_{2},m_{2})$ be two solutions.  We take the difference in the evolution equations for the mean,
recalling that $\nabla u_{i}=\nabla w_{i}:$
\begin{equation}\nonumber
\partial_{t}\int_{\mathbb{T}^{d}}u_{1}-u_{2}\ dx = \int_{\mathbb{T}^{d}}\cH(t,x,\nabla w_{1},m_{1})-\cH(t,x,\nabla w_{2},m_{2})\ dx.
\end{equation}
Taking absolute values, adding and subtracting, and using the triangle inequality, we have
\begin{multline}\nonumber
\left|\partial_{t}\int_{\mathbb{T}_{d}}u_{1}-u_{2}\ dx\right|
\leq
\int_{\mathbb{T}^{d}}\left|\cH(t,x,\nabla w_{1},m_{1})-\cH(t,x,\nabla w_{2},m_{1})\right|\ dx
\\
+\int_{\mathbb{T}^{d}}\left|\cH(t,x,\nabla w_{2},m_{1})-\cH(t,x,\nabla w_{2},m_{2})\right|\ dx.
\end{multline}
Using the Lipschitz properties of the Hamiltonian, and the boundedness of the solutions $(w_{i},m_{i}),$ we may bound
this as
\begin{equation}\nonumber
\left|\partial_{t}\int_{\mathbb{T}^{d}}u_{1}-u_{2}\ dx\right|\leq c\int_{\mathbb{T}^{d}}|\nabla w_{1}-\nabla w_{2}|\ dx
+c\int_{\mathbb{T}^{d}}|m_{1}-m_{2}|\ dx.
\end{equation}
Using $m_{1}-m_{2}=\mu_{1}-\mu_{2},$ this then implies
\begin{equation}\label{mostOfMean}
\left|\partial_{t}\int_{\mathbb{T}^{d}}u_{1}-u_{2}\ dx\right|\leq
c\left(\|w_{1}-w_{2}\|_{H^{1}}+\|\mu_{1}-\mu_{2}\|_{L^{2}}\right).
\end{equation}

We can write the difference of the means as
\begin{equation}\nonumber
\int_{\mathbb{T}^{d}}u_{1}(t,\cdot)-u_{2}(t,\cdot)\ dx 
= \int_{\mathbb{T}^{d}}u_{1}(T,\cdot)-u_{2}(T,\cdot)\ dx 
-\int_{t}^{T}\partial_{t}\left(\int_{\mathbb{T}^{d}}u_{1}-u_{2}\ dx\right)ds.
\end{equation}
Using the terminal condition for $u,$ this becomes
\begin{equation}\nonumber
\int_{\mathbb{T}^{d}}u_{1}(t,\cdot)-u_{2}(t,\cdot)\ dx
=\int_{\mathbb{T}^{d}}G(m_{1}(T,\cdot),\cdot)-G(m_{2}(T,\cdot),\cdot)\ dx
-\int_{t}^{T}\partial_{t}\left(\int_{\mathbb{T}^{d}}u_{1}-u_{2}\ dx\right)ds.
\end{equation}
We may then estimate this as
\begin{equation}\nonumber
\left|\int_{\mathbb{T}^{d}}u_{1}-u_{2}\ dx\right|\leq\int_{\mathbb{T}^{d}}|G(m_{1}(T,\cdot),\cdot)-G(m_{2}(T,\cdot),\cdot)|\ dx
+T\sup_{t\in[0,T]}\left|\partial_{t}\int_{\mathbb{T}^{d}}u_{1}-u_{2}\ dx\right|.
\end{equation}
The Lipschitz property of $G$ then implies
\begin{equation}\nonumber
\left|\int_{\mathbb{T}^{d}}u_{1}-u_{2}\ dx\right|\leq c\int_{\mathbb{T}^{d}}|m_{1}(T,\cdot)-m_{2}(T,\cdot)|\ dx
+T\sup_{t\in[0,T]}\left|\partial_{t}\int_{\mathbb{T}^{d}}u_{1}-u_{2}\ dx\right|.
\end{equation}
Again using $m_{1}-m_{2}=\mu_{1}-\mu_{2},$ and also using \eqref{mostOfMean}, and using a supremum in time,
we have
\begin{equation}\nonumber
\sup_{t\in[0,T]}\left|\int_{\mathbb{T}^{d}}u_{1}-u_{2}\ dx\right|
\leq c\sup_{t\in[0,T]}(\|w_{1}-w_{2}\|_{H^{1}}+\|\mu_{1}-\mu_{2}\|_{L^{2}}).
\end{equation}
We see that \eqref{allButTheMean} then implies 
\begin{equation}\nonumber
\sup_{t\in[0,T]}\left|\int_{\mathbb{T}^{d}}u_{1}-u_{2}\ dx\right|
\leq c\|m^{1}_{0}-m^{2}_{0}\|.
\end{equation}
This completes the proof of \eqref{eq:stability}.
\end{proof}


\begin{thebibliography}{99}

\bibitem{AchBueLasLioMol} {\sc Y. Achdou, F.J. Buera, J.-M. Lasry, P.-L. Lions, B. Moll}, Partial differential equation models in macroeconomics, {\it Philos. Trans. R. Soc. Lond. Ser. A Math. Phys. Eng. Sci.}, 372 (2014), no. 2028, 20130397, 19 pp.

\bibitem{AchPor} {\sc Y. Achdou, A. Porretta}, Mean field games with congestion, {\it Ann. Inst. H. Poincar\'e Anal. Non Lin\'eaire}, 35 (2018), no. 2, 443--480.


\bibitem{ambroseThesis}  {\sc D.M. Ambrose}, Well-posedness of vortex sheets with surface tension, {\it SIAM J. Math. 
Anal.}, (1) 35 (2003), 211--244.

\bibitem{Amb18:2} {\sc D.M. Ambrose}, Existence theory for non-separable mean field games in Sobolev spaces, {\it  Indiana Univ. Math. J.}, { (2) 71 (2022), 611--647.}

\bibitem{Amb18} {\sc D.M. Ambrose}, Strong solutions for time-dependent mean field games with non-separable Hamiltonians, {\it J. Math. Pures Appl.}, (9) 113 (2018), 141--154.


\bibitem{AmbGigSav} 
\newblock {\sc L.~Ambrosio, N.~Gigli, G.~Savar\'e}, 
\newblock \emph{Gradient flows in metric spaces and the Wasserstein spaces of probability measures}, 
\newblock {\em Lectures in Mathematics}, ETH Z\"urich, Birkh\"auser, (2008).


\bibitem{Beale}
{\sc J.T. Beale}, The initial value problem for the Navier-Stokes equations with a free surface,
{\it Comm. Pure Appl. Math.}, 34 (1981), no. 3, 359--392.

\bibitem{BenGraYam} 
\newblock {\sc A.~Bensoussan, P. J.~ Graber, S.C.P.~Yam}, 
\newblock {Control on Hilbert Spaces and Application to Mean Field Type Control Theory}, 
\newblock  arXiv:2005.10770.

\bibitem{Car:17} {\sc P. Cardaliaguet}, The convergence problem in mean field games with local coupling, {\it Appl. Math. Optim.}, 76 (2017), no. 1, 177--215.

\bibitem{CarCirPor} {\sc P. Cardaliaguet, M. Cirant, A. Porretta}, Splitting methods and short time existence for the master equations in mean field games, {\it J. Eur. Math. Soc. (JEMS)}, to appear, arXiv:2001.11970.

\bibitem{CarDelLasLio} 
\newblock {\sc P.~Cardaliaguet, F.~Delarue, J-M.~Lasry, P-L.~Lions}, 
\newblock \emph{The master equation and the convergence problem in mean field games},
\newblock  Annals of Mathematics Studies, 201. Princeton University Press, Princeton, NJ, 2019. x+212 pp.


%\bibitem{CarDel-I} 
%\newblock {\sc R. Carmona, F. Delarue}, 
%\newblock \emph{Probabilistic theory of mean field games with applications. I. Mean field FBSDEs, control, and games.}, 
%\newblock  Probability Theory and Stochastic Modelling, 83. Springer, Cham, 2018. xxv+713 pp.


\bibitem{CarDel-II} 
\newblock {\sc R. Carmona, F. Delarue}, 
\newblock \emph{ Probabilistic theory of mean field games with applications. II. Mean field games with common noise and master equations.}, 
\newblock Probability Theory and Stochastic Modelling, 84. Springer, Cham, 2018. xxiv+697 pp.

\bibitem{ChaCriDel} 
\newblock {\sc J.-F. Chassagneux, D. Crisan, F. Delarue},  
\newblock {A probabilistic approach to classical solutions of the master equation for large population equilibria}, 
\newblock  {\it Mem. Amer. Math. Soc.}, \emph{to appear}.


\bibitem{cirantEtAl} {\sc M. Cirant, R. Gianni, P. Mannucci}, Short-time existence for a general backward-forward parabolic system arising from mean-field games, {\it Dyn. Games Appl.} 10 (2020), no. 1, 100--119.

\bibitem{DelLacRam:19} {\sc F. Delarue, D. Lacker, K. Ramanan}, {From the master equation to mean field game limit theory: a central limit theorem}, {\it Electron. J. Probab.}, 24 (2019), Paper No. 51, 54 pp.

\bibitem{DelLacRam:20} {\sc F. Delarue, D. Lacker, K. Ramanan}, {From the master equation to mean field game limit theory: large deviations and concentration of measure}, {\it Ann. Probab.} 48 (2020), no. 1, 211--263.

%\bibitem{DucRob} {\sc J. Duchon, R. Robert}, Global vortex sheet solutions of Euler equations in the plane, {\it J. Differential Equations} 73 (2), (1988) 215--224.

\bibitem{EvaFerGomNurVos} {\sc D. Evangelista, R. Ferreira, D.A. Gomes, L. Nurbekyan, V. Voskanyan}, First-order, stationary mean-field games with congestion, {\it Nonlinear Anal.} 173 (2018), 37--74.

\bibitem{EvaGom} {\sc D. Evangelista, D.A. Gomes}, On the existence of solutions for stationary mean-field games with congestion, {\it J. Dynam. Differential Equations}, 30 (2018), no. 4, 1365--1388.

\bibitem{GanMes} {\sc W. Gangbo, A.R. M\'esz\'aros}, Global well-posedness of Master equations for deterministic displacement convex potential mean field games, {\it Comm. Pure Appl. Math.}, to appear, arXiv:2004.01660.

\bibitem{GanMesMouZha} {\sc W. Gangbo, A.R. M\'esz\'aros, C. Mou, J. Zhang}, Mean Field Games Master Equations with Non-separable Hamiltonians and Displacement Monotonicity, {\it Ann. Probab.}, to appear, arXiv:2101.12362.

\bibitem{GanTud} {\sc W. Gangbo, A. Tudorascu}, {On differentiability in the Wasserstein space and well--posedness for Hamilton--Jacobi equations}, {\it J. Math. Pures Appl. (9)}, 125 (2019), 119--174.

\bibitem{GanTud14} {\sc W. Gangbo, A. Tudorascu}, Weak KAM theory on the Wasserstein torus with multidimensional underlying space, {\it Comm. Pure Appl. Math.}, 67 (2014), no. 3, 408--463. 

\bibitem{GanSwi:15} 
\newblock {\sc  W.~Gangbo, A.~Swiech},  
\newblock \emph{Existence of a solution to an equation arising from the theory of Mean Field Games}, 
\newblock J. Differential Equations (2015) Vol 259, Issue 11, 6573--6643.

\bibitem{GomSed} {\sc D. Gomes, M. Sedjro}, One-dimensional, forward-forward mean-field games with congestion, {\it Discrete Contin. Dyn. Syst. Ser. S}, 11 (2018), no. 5, 901--914.

\bibitem{GomVos} {\sc D.A. Gomes, V.K. Voskanyan}, Short-time existence of solutions for mean-field games with congestion, {\it J. Lond. Math. Soc. (2)}, 92 (2015), no. 3, 778--799.

\bibitem{HuaMalCai} {\sc M. Huang, R.P. Malham\'e, P.E. Caines}, Large population stochastic dynamic games: closed-loop McKean--Vlasov systems and the Nash certainty equivalence principle, {\it Commun. Inf. Syst.} 6 (3), 221--251 (2006).

\bibitem{LasLio-1} 
\newblock  {\sc J. M.~Lasry and P.-L.~Lions},
\newblock {Jeux \`a champ moyen. II. Le cas stationnaire},
\newblock \emph{C. R. Math. Acad. Sci. Paris}, 343 (2006), no. 9, 619--625.


\bibitem{LasLio-2} 
\newblock  {\sc J. M.~Lasry and P.-L.~Lions}, 
\newblock {Jeux \`a champ moyen. II. Horizon fini et contr\^ole optimal},
\newblock \emph{C. R. Math. Acad. Sci. Paris}, 343 (2006), no. 10, 679-- 684.


\bibitem{LasLio-3} 
\newblock  {\sc J.-M.~Lasry and P.-L.~Lions},
\newblock {Mean field games},
\newblock \emph{Jpn. J. Math.} 2 (2007), 229--260.


\bibitem{Lio}  {\sc   P.-L. Lions}, \emph{Lectures at Coll\`ege de France}.

%\bibitem{Loe} {\sc G. Loeper}, Uniqueness of the solution to the Vlasov-Poisson system with bounded density, {\it J. Math. Pures Appl.} (9), 86 (2006), no. 1, 68--79.

\bibitem{May}  
\newblock {\sc S.~Mayorga}, 
\newblock {Short time solution to the master equation of a first order mean field game system}, 
\newblock {\it J. Differential Equations} 268 (2020), no. 10, 6251--6318.

\bibitem{MisMou}  
\newblock {\sc S. Mischler, C. Mouhot}, 
\newblock {Kac's program in kinetic theory}, 
\newblock {\it Invent. Math.}, 193 (2013), no. 1, 1--147.


\bibitem{MouZha}  
\newblock {\sc C.~Mou, J. Zhang}, 
\newblock {Wellposedness of second order master equations for mean field games with nonsmooth data}, {\it Mem. Amer. Math. Soc.}, to appear,
\newblock arXiv:1903.09907v3. 


\bibitem{Pey} {\sc R. Peyre}, Comparison between $W_2$ distance and $H^{-1}$ norm, and localization of Wasserstein distance, {\it ESAIM Control Optim. Calc. Var.}, 24 (2018), no. 4, 1489--1501.




\end{thebibliography}
\end{document}